\documentclass[reqno]{amsart}
\usepackage{graphicx}
\usepackage[dvips]{epsfig}
\usepackage{bm}
\usepackage{mathrsfs}
\usepackage[breakable]{tcolorbox}
\usepackage{geometry}
\allowdisplaybreaks

 \makeatletter
\@namedef{subjclassname@2020}{%
  \textup{2020} Mathematics Subject Classification}
\makeatother

\newtheorem{theorem}{Theorem}[section]
\newtheorem{lemma}{Lemma}[section]
\newtheorem{proposition}[lemma]{Proposition}%[setcion]
\newtheorem{definition}[lemma]{Definition}%[section]
\newtheorem{remark}[lemma]{Remark}%[section]
\newtheorem{problem}{{\bf{Problem}}}[section]

\numberwithin{figure}{section}

\numberwithin{equation}{section}

\allowdisplaybreaks
\begin{document}

\title[TWO-PHASE FREE BOUNDARY PROBLEMS FOR EULER FLOWS]{A Two-Phase Free Boundary Problem for Axisymmetric Subsonic Euler Flows with Contact Discontinuities}

\author{Hyangdong Park}
\address{Department of Mathematical Sciences, Korea Advanced Institute of Science and Technology (KAIST), 291 Daehak-ro, Yuseong-gu, Daejeon 34141, Republic of Korea}
\email{hyangdong@kaist.ac.kr}

\keywords{contact discontinuity, Euler system, subsonic, two-phase free boundary problem, vorticity} 

\subjclass[2020]{%\AMSMOS
 35J47, 35J57, 35J66,  35M30, 35Q31, 35Q35, 76N10}

\date{\today}

\begin{abstract} 
We study a free boundary problem for the three-dimensional steady compressible Euler equations in an infinitely long circular cylinder. The free boundary is a contact discontinuity separating an axisymmetric rotational subsonic flow and an axisymmetric potential subsonic flow, neither of which is prescribed a priori. The pressure continuity condition couples two unknown Euler phases through an unknown interface, leading to a genuinely two-phase free boundary problem. Using a Helmholtz decomposition, we reformulate the pressure continuity condition as nonlinear boundary conditions for the Helmholtz variables and develop a coupled iteration framework that simultaneously determines the free boundary and the two flow fields. Uniform estimates independent of the truncation length allow us to pass to the infinite-length limit and construct global solutions. We further establish the downstream asymptotic behavior of solutions, showing that the contact discontinuity becomes asymptotically cylindrical and that the radial velocity vanishes at infinity. To the best of our knowledge, this provides the first existence result and asymptotic characterization for a genuinely two-phase free boundary problem involving a contact discontinuity between an unknown rotational subsonic flow and an unknown potential subsonic flow in a three-dimensional infinitely long cylinder.
\end{abstract}

\maketitle

%%%%%%%%%%%%%%%%%%%%%%%%%%%%%%%%%%%

\tableofcontents

\section{Introduction}
Free boundary problems generated by characteristic discontinuities constitute one of the central themes in the mathematical theory of compressible flows. Among them, contact discontinuities for the steady Euler equations present a particularly delicate analytical challenge. Across a contact discontinuity, the pressure remains continuous while the density, entropy, and tangential velocity may undergo jumps. Consequently, the location of the interface is not prescribed a priori and must be determined simultaneously with the surrounding flow fields through the Rankine–Hugoniot conditions.
The difficulty becomes substantially more pronounced in the subsonic regime. Unlike supersonic flows, the steady subsonic Euler system possesses a mixed hyperbolic–elliptic structure. Transport quantities such as entropy and angular momentum density propagate along streamlines, while the velocity field is determined through nonlinear elliptic equations. As a result, perturbations of a contact discontinuity influence both transport and elliptic components in a highly nonlocal manner. This interaction gives rise to a free boundary problem whose solvability remains largely open in multidimensional settings.
Most of the existing existence theories for subsonic contact discontinuities possess an important structural simplification. Either one side of the discontinuity is prescribed a priori, or both phases are restricted to irrotational configurations. In such settings, the interface acts essentially as a boundary condition for a single unknown phase. The resulting free boundary problem is therefore effectively one-phase in nature.
%%%%%

%%

The purpose of the present paper is to investigate a fundamentally different configuration. We consider a three-dimensional infinitely long circular cylinder and seek contact discontinuities separating an axisymmetric rotational subsonic Euler flow from an axisymmetric potential subsonic flow. Neither phase is prescribed a priori, and the free boundary together with the two adjacent flow states must be determined simultaneously.
The presence of a potential phase does not eliminate the nonlinear coupling inherent in the free boundary problem. Although the potential phase is governed solely by an elliptic subsystem, the rotational phase still contains both transport and elliptic components through the entropy and angular momentum density. The pressure continuity condition across the contact discontinuity therefore couples an unknown potential flow, an unknown rotational flow, and an unknown interface. Consequently, the contact discontinuity no longer acts as a boundary condition relative to a prescribed background state. Instead, it becomes a nonlinear transmission interface linking two distinct Euler phases through an unknown free boundary.
From the viewpoint of the Helmholtz decomposition, the rotational phase consists of a transport subsystem governing entropy and angular momentum density together with a nonlinear elliptic subsystem determining the velocity field, whereas the potential phase is governed by a nonlinear elliptic system. The pressure continuity condition therefore generates a nonlinear coupling among the transport variables, the elliptic variables, and the free boundary itself. This coupling prevents the two phases from being constructed independently and leads to a genuinely two-phase free boundary problem.
In particular, although one phase is irrotational, its state is not prescribed a priori and must be determined simultaneously with the rotational phase and the free boundary.

The main observation of this paper is that the pressure continuity condition can be reformulated as nonlinear boundary conditions for the Helmholtz variables. This reformulation reveals a transmission mechanism through which the potential phase, the rotational phase, and the free boundary interact. Motivated by this observation, we develop a coupled iteration framework in which the free boundary and the two Euler phases are constructed simultaneously while preserving the Rankine–Hugoniot conditions.
 A major difficulty arises from the fact that the contact discontinuity extends through an infinitely long cylinder. To overcome this issue, we introduce a family of free boundary problems posed in truncated cylinders and establish a priori estimates that are uniform with respect to the truncation length. These estimates allow us to pass to the infinite-length limit and construct a global contact discontinuity separating an axisymmetric potential subsonic flow and an axisymmetric rotational subsonic flow.
To the best of our knowledge, no previous existence theory appears to be available for
 a contact discontinuity separating an unknown rotational subsonic Euler flow and an unknown potential subsonic flow in a three-dimensional infinitely long cylinder.
 
 In addition to the existence theory, we also investigate the far-field behavior of the resulting solutions. Although the pressure continuity condition couples two unknown Euler phases through a free boundary, the flow exhibits a partial stabilization mechanism downstream. We prove that the contact discontinuity becomes asymptotically cylindrical and that the radial velocity vanishes at infinity. As a consequence, the radial momentum equation is asymptotically governed by the balance between the pressure gradient and the centrifugal force. These asymptotic properties provide a quantitative description of the downstream structure of the two-phase flow and reveal how the rotational effects persist in the far field.

%%%%%%%%%%%%%%%%%%%%%%%%%%%%%%%%%%%
The study of free boundary problems generated by contact discontinuities for the steady Euler equations has attracted considerable attention in recent years. For isentropic Euler flows, free boundary problems associated with contact discontinuities were investigated in \cite{bae2013stability}. For two-dimensional subsonic Euler flows with large vorticity and characteristic discontinuities, an existence theory was established in various nozzle geometries in \cite{MR3914482}. The analysis of rotational subsonic Euler flows with nonzero vorticity has also been investigated in \cite{bae2019contact,bae2019contact3D}, where a Helmholtz decomposition was employed to reformulate the Euler system and one-phase free boundary problems were studied.
The analysis of contact discontinuities has also been carried out in transonic and supersonic settings. Stability theories for transonic and supersonic contact discontinuities were established in \cite{HUANG20194337,Huang:2021aa}. 
Unlike the aforementioned works, the present paper treats a genuinely two-phase free boundary problem for rotational subsonic Euler flows in an infinitely long cylinder.

The main contributions of the paper can be summarized as follows.
\begin{itemize}

\item[(1)] We formulate contact discontinuities separating an axisymmetric rotational subsonic Euler flow and an axisymmetric potential subsonic flow as a two-phase free boundary problem.

\item[(2)] We reformulate the pressure continuity condition as nonlinear boundary conditions for the Helmholtz variables. This reformulation reveals the transmission mechanism coupling the free boundary, the transport subsystem, and the elliptic subsystem associated with the two Euler phases.

\item[(3)] We develop a coupled iteration framework that simultaneously determines the free boundary and the two Euler phases while preserving the Rankine–Hugoniot conditions throughout the iteration procedure.

\item[(4)] Using uniform a priori estimates independent of the truncation length, we construct global solutions in an infinitely long cylinder.

\item[(5)] We establish the far-field asymptotic structure of the two-phase flow. In particular, the contact discontinuity becomes asymptotically cylindrical, the radial velocity vanishes, and the radial momentum equation is asymptotically balanced by the pressure gradient and centrifugal force associated with the angular momentum density.\end{itemize}

%%%%%%%%%%%%%%%%%%%%%%%%%%%%%%%%%%%
The paper is organized as follows. In Section \ref{S-2}, we formulate the free boundary problem and state the main theorem. In Section \ref{S-3}, we reformulate the Euler system by means of the Helmholtz decomposition and derive an equivalent two-phase free boundary problem in terms of the Helmholtz variables. This reformulation converts the pressure continuity condition into nonlinear boundary conditions. % and provides the analytic framework for the coupled iteration scheme.
The core of the analysis is carried out in Sections \ref{S-4} and \ref{S-6}. In Section \ref{S-4}, we study free boundary problems in truncated cylinders and construct a coupled iteration map that simultaneously updates the free boundary and the two Euler phases. Uniform estimates independent of the truncation length are established at this stage. In Section \ref{S-6}, these estimates are used to pass to the infinite-length limit and obtain solutions in the original infinitely long cylinder.
Finally, Section \ref{S-7} is devoted to the proof of Theorem \ref{main=Thm-infty}. In Section \ref{sub-ex}, we verify that solutions of the reformulated problem generate weak solutions of the steady Euler system with a contact discontinuity. In Section \ref{sub-asym}, we establish the downstream asymptotic behavior of solutions, including the asymptotic cylindrical shape of the contact discontinuity and the decay of the radial velocity.

%%%%%%%%%%%%%%%%%%%%%%%%%%%%%%%%%%%%%%
\section{Problem and Main results}\label{S-2}
In $\mathbb{R}^n$, the steady compressible flows of ideal polytropic gases are governed by the steady Euler system:
\begin{equation}\label{Euler}
\left\{
\begin{split}
&\mbox{div}(\rho{\bf u})=0,\\
&\mbox{div}(\rho{\bf u}\otimes{\bf u})+\nabla p={\bf0},\\
&\mbox{div}(\rho{\bf u} B)=0,
\end{split}
\right.
\end{equation}
for the density $\rho$, velocity ${\bf u}=(u_1,\ldots,u_n)$, pressure $p$, and the Bernoulli invariant $B$ defined by
\begin{equation*}
B(\rho,{\bf u},p):=\frac{|{\bf u}|^2}{2}+\frac{\gamma p}{(\gamma-1)\rho}
\end{equation*}
for the adiabatic exponent $\gamma>1$.
As is well known, the type of system \eqref{Euler} depends on the type of flow.
If the flow is supersonic (i.e., $|{\bf u}|>\sqrt{\gamma p/\rho}$), then the system is hyperbolic. 
If the flow is subsonic (i.e., $|{\bf u}|<\sqrt{\gamma p/\rho}$), then the system is of mixed hyperbolic-elliptic type.

%%%%%%%%%%%%%%%%%%%%%%%%%%%%%%%%%%%
%%%%
Let $\Omega\subset\mathbb{R}^n$ be an open connected set, and let $\Gamma$ be a non-self-intersecting $C^1$ surface of codimension $1$ dividing $\Omega$ into two disjoint open subsets $\Omega^{\pm}$ such that $\Omega=\Omega^-\cup\Gamma\cup\Omega^+$.
%%%%%%%%%%%%%%%%%%%%%%%%%%%%%%%%%%%
\begin{definition}%[Weak solution to \eqref{Euler}]
We define ${\bf U}=(\rho,{\bf u},p)\in [L_{\rm loc}^{\infty}(\Omega)\cap C_{\rm loc}^1(\Omega^{\pm})\cap C_{\rm loc}^0(\Omega^{\pm}\cup\Gamma)]^{n+2}$ to be a weak solution to \eqref{Euler} in $\Omega$ if the following properties hold: For any test function $\xi\in C_0^\infty(\Omega)$ and $k=1,\dots,n$,
\begin{equation}\label{weak}
\int_{\Omega}\rho{\bf u}\cdot\nabla\xi d{\bf x}=\int_{\Omega}(\rho u_k{\bf u}+p{\bf e}_k)\cdot\nabla\xi d{\bf x}=\int_{\Omega}\rho{\bf u}B\cdot\nabla\xi d{\bf x}=0,
\end{equation}
where each ${\bf e}_k$ is the unit vector in the $x_k$-direction.
\end{definition}
%%%%%%%%%%%%%%%%%%%%%%%%%%%%%%%%%%%
%%

By integration by parts, ${\bf U}$ satisfies \eqref{weak} if and only if 
\begin{itemize}
\item[$(w_1)$] ${\bf U}$ is a classical solution to \eqref{Euler} in $\Omega^{\pm}$;
\item[$(w_2)$] ${\bf U}$ satisfies the Rankine-Hugoniot conditions
	\begin{itemize}
	\item[(RH1)] $[\rho{\bf u}\cdot{\bf n}]_{\Gamma}=0$, $[\rho({\bf u}\cdot{\bf n})^2+p]_{\Gamma}=0$, $[\rho{\bf u}\cdot{\bf n} B]_{\Gamma}=0$,
	\item[(RH2)] $\rho({\bf u}\cdot{\bf n})[{\bf u}\cdot{\bm\tau}_k]_{\Gamma}=0$ for all $k=1,\dots, n-1$, 
	\end{itemize}
	where $[\,\cdot\,]_{\Gamma}$ defined by  $[F({\bf x})]_{\Gamma}:=F({\bf x})_{\overline{\Omega^-}}-F({\bf x})_{\overline{\Omega^+}}$ for ${\bf x}\in\Gamma$, ${\bf n}$ is a unit normal vector field on $\Gamma$, and ${\bm \tau}_k$ $(k=1,\dots,n-1)$ are unit tangent vector fields on $\Gamma$ such that they are linearly independent at each point on $\Gamma$.
\end{itemize}
%%%%%%%%%%%%%%%%%%%%%%%%%%%%%%%%%%%
When $\rho>0$,  (RH2) is satisfied if either ${\bf u}\cdot{\bf n}=0$ on $\Gamma$ or $[{\bf u}\cdot{\bm\tau}_k]_{\Gamma}=0$ for all $k=1,\dots,n-1$.
If ${\bf u}\cdot {\bf n}=0$ on $\Gamma$ and $[p]_\Gamma=0$, then $\Gamma$ is a contact discontinuity. 
When ${\bf u}\cdot {\bf n}\neq 0$ and the corresponding Rankine--Hugoniot conditions are satisfied, the discontinuity is a shock.
In this paper, we study contact discontinuities. For shocks, one can refer to \cite{bae2011transonic,  chen2004steady, chen2007existence, chen2018mathematics} and the references cited therein.

\begin{definition}\label{def-cd}
We define ${\bf U}=(\rho,{\bf u},p)\in [L_{\rm loc}^{\infty}(\Omega)\cap C_{\rm loc}^1(\Omega^{\pm})\cap C_{\rm loc}^0(\Omega^{\pm}\cup\Gamma)]^{n+2}$ to be a weak solution to the Euler system \eqref{Euler} in $\Omega$ with a contact discontinuity $\Gamma$ provided that the following properties hold:
\begin{itemize}
\item[(i)] $\Gamma$ is a non-self-intersecting $C^1$-surface of co-dimension 1 dividing $\Omega$ into two open subsets $\Omega^{\pm}$ such that $\Omega=\Omega^+\cup\Gamma\cup\Omega^-$;
\item[(ii)] ${\bf U}$ is a classical solution to \eqref{Euler} in $\Omega^{\pm}$;
\item[(iii)] $\rho>0$ in $\overline{\Omega}$;
%\item[(iv)] $({\bf u}|_{\overline{\Omega^-}\cap\Gamma}-{\bf u}|_{\overline{\Omega^+}\cap\Gamma})({\bf x})\ne{\bf 0}$ holds for all ${\bf x}\in\Gamma$;
\item[(iv)] ${\bf u}\cdot{\bf n}|_{\overline{\Omega^-}\cap\Gamma}={\bf u}\cdot{\bf n}|_{\overline{\Omega^+}\cap\Gamma}=0$, where ${\bf n}$ is a unit normal vector field on $\Gamma$;
\item[(v)] $[p]_{\Gamma}=0$. 
\end{itemize}
\end{definition}
%%%%%%%%%%%%%%%%%%
%%%%%%%%%%%%%%%%%%%

%%%%%%%%%%%%%%%%%%%%%%
\begin{figure}[!h]
\centering
\includegraphics[scale=0.75]{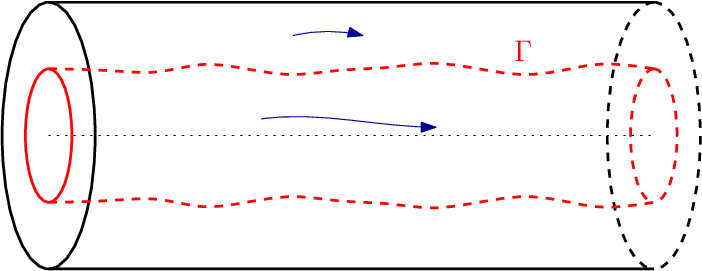}
\caption{Contact discontinuity $\Gamma$ in a 3-D circular cylinder}\label{fig0}
\end{figure}
%%%%%%%%%%%%%%%%%%%%%%%%%%%%%%%%%%%
%%%%%%%%%%%
We are concerned with the existence of contact discontinuities in a three-dimensional circular cylinder $\Omega$ of infinite length defined as follows:
\begin{equation*}
\Omega:=\left\{(x_1,x_2,x_3)\in\mathbb{R}^3:\,x_1>0,0\le\sqrt{x_2^2+x_3^2}<1\right\}.
\end{equation*}
Denote the entrance and the  wall of the nozzle $\Omega$ as follows:
\begin{equation*}
\Gamma_0:=\partial\Omega\cap\{x_1=0\},\quad\Gamma_{\rm w}:=\partial\Omega\cap\left\{\sqrt{x_2^2+x_3^2}=1\right\}.
%,\quad\Gamma_L:=\partial\Omega\cap\{x_1=L\}.
\end{equation*}
For a function $f:\mathbb{R}^{\ge0}\to(\frac{1}{4},\frac{3}{4})$ with $f(0)=\frac{1}{2}$, define $\Omega_f^-$ and $\Omega_f^+$ by 
\begin{equation*}
\Omega_f^-:=\Omega\cap\left\{0<\sqrt{x_2^2+x_3^2}<f(x_1)\right\}\mbox{ and }\Omega_f^+:=\Omega\cap\left\{f(x_1)<\sqrt{x_2^2+x_3^2}<1\right\}.
\end{equation*}
For each region of $\Omega_f^{\pm}$, we denote the entrance as follows:
\begin{equation*}
\begin{split}
&\Gamma_{0}^\pm:=\partial\Omega_f^\pm\cap\Gamma_0. %,\quad \Gamma_{\rm w}:=\partial\Omega_f^{+}\cap\Gamma_{\rm w}.
%,\quad \Gamma_{f,L}^{\pm}:=\partial\Omega_f^{\pm}\cap\Gamma_L.
\end{split}
\end{equation*}

%%%%%%%%%%t%%%%%%
%%%
To study flows in a circular cylinder $\Omega$, we use a standard cylindrical coordinate system $(x,r,\theta)$ given by the relation
\begin{equation*}
x_1=x,\quad x_2=r\cos\theta,\quad x_3=r\sin\theta,\quad\mbox{$r\in\mathbb{R}^+$,\quad$\theta\in\mathbb{T}$,}
\end{equation*}
where $\mathbb{T}$ is a one-dimensional torus with period $2\pi$. 
%In this paper, we study axisymmetric flows. Let us define axisymmetric functions as follows:
\begin{definition}
Let us define an orthonormal basis $\{{\bf e}_x,{\bf e}_r,{\bf e}_{\theta}\}$ by 
\begin{equation*}
{\bf e}_x:=(1,0,0),\quad {\bf e}_r:=(0,\cos\theta,\sin\theta),\quad {\bf e}_{\theta}:=(0,-\sin\theta,\cos\theta).
\end{equation*}
Let $D$ be an open subset of $\mathbb{R}^3$ and let ${\bf x}\in D$.
\begin{itemize}
\item[(i)] A function $f:D\to\mathbb{R}$ is axisymmetric if $f({\bf x})=\tilde{f}(x,r)$ for some function $\tilde{f}:\mathbb{R}^2\to\mathbb{R}$.
\item[(ii)] A vector-valued function ${\bf F}:D\to\mathbb{R}^3$ is axisymmetric if ${\bf F}=F_x{\bf e}_x+F_r{\bf e}_r+F_{\theta}{\bf e}_{\theta}$  for some axisymmetric functions $F_x$, $F_r$, and $F_\theta$.
\end{itemize}
\end{definition}

%%%%%%%%%%t%%%%%%%%%%%%%%%%%%%%%%%%%
%%%%%%%%%%%%%%%%%%%
%%%%%%%%%%t%%%%%%%%%%%%%%%%%%%%%%%%%
The goal of this work is to prove the existence of an axisymmetric subsonic solution to the Euler system \eqref{Euler} with a contact discontinuity separating a rotational phase, which may have nonzero vorticity and nonzero angular momentum density, from a potential phase in a three-dimensional infinitely long cylinder.
%The goal of this work is to prove the existence of an axisymmetric subsonic ($|{\bf u}|<\sqrt{\gamma p/\rho}$) solution to the Euler system \eqref{Euler} with non-zero vorticity $(\nabla\times{\bf u}\ne0$), non-zero angular momentum density $({\bf u}\cdot{\bf e}_{\theta}\ne0$), and a contact discontinuity in the sense of Definition \ref{def-cd} in a three-dimensional infinitely long cylinder $\Omega$. 
%%%%%%%%%%%%%%%%%%%%%%%%%%%%%%%%%%%%%%
%
%

We first consider a simple solution.
For distinct positive constants $u_0^{\pm}$ with $0<u_0^+<u_0^-$, define two velocity vectors ${\bf u}_{0}^{\pm}$ by 
\begin{equation*}
\mbox{${\bf u}_0^+:=(u_0^{+},0,0)$ and ${\bf u}_0^-:=(u_0^-,0,0)$.}
\end{equation*} For positive constants $\rho_0^{\pm}$ and $p_0$ with  
\begin{equation*}
u_0^{\pm}<\sqrt{\gamma p_0\slash \rho^{\pm}},
\end{equation*}
it is clear that a vector-valued function ${\bf U}_0$ defined by 
\begin{equation}\label{background}
{\bf U}_0({\bf x}):=\left\{\begin{split}
				(\rho_0^+,{\bf u}_0^+,p_0)\quad&\mbox{in}\,\, \Omega_{\frac{1}{2}}^+,\\
				(\rho_0^-,{\bf u}_0^-,p_0)\quad&\mbox{in}\,\, \Omega_{\frac{1}{2}}^-,
				\end{split}\right.
\end{equation}
is a subsonic solution to the Euler system \eqref{Euler} with a contact discontinuity on the surface $\mathfrak{C}_{\frac{1}{2}}:={\Omega}\cap\{\sqrt{x_2^2+x_3^2}=\frac{1}{2}\}$.
For this solution, the entropy $S_0$ and the Bernoulli invariant $B_0$ are piecewise constant functions with 
\begin{equation}\label{S0-def}
\begin{split}
&S_0:=\left\{\begin{split}
				\frac{p_0}{(\rho_0^+)^{\gamma}}=:S_0^+\quad&\mbox{in}\,\,\Omega_{\frac{1}{2}}^+,\\
				\frac{p_0}{(\rho_0^-)^{\gamma}}=:S_0^-\quad&\mbox{in}\,\, \Omega_{\frac{1}{2}}^-,
				\end{split}\right.\\
&B_0:=\left\{\begin{split}
				\frac{|{\bf u}_0^+|^2}{2}+\frac{\gamma p_0}{(\gamma-1)\rho_0^+}=:B_0^+\quad&\mbox{in}\,\, \Omega_{\frac{1}{2}}^+,\\
				\frac{|{\bf u}_0^-|^2}{2}+\frac{\gamma p_0}{(\gamma-1)\rho_0^-}=:B_0^-\quad&\mbox{in}\,\, \Omega_{\frac{1}{2}}^-.
				\end{split}\right.
\end{split}
\end{equation}

%%%%%%%%%%%%%%%%%%%
\begin{figure}[!h]
\centering
\includegraphics[scale=0.8]{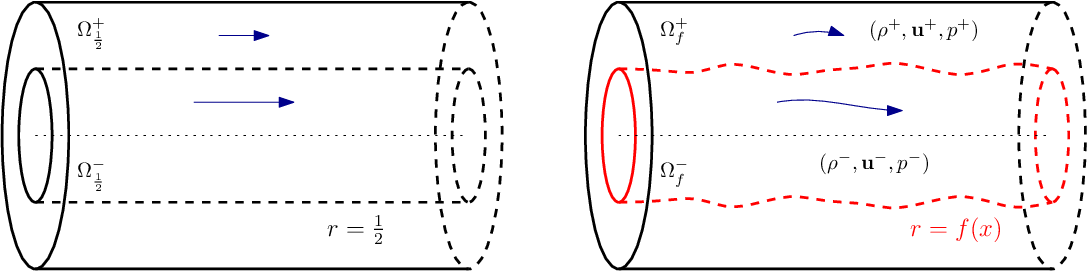}
\caption{Left: background state, right: Problem \ref{P-1-infty}}\label{fig1}
\end{figure}

%%%%%%%%%%%%%%%%%%%

We are concerned with the following problem.
%%%%%%%%%%%%%%%
\begin{problem}\label{P-1-infty} %%%%en도 pm으로 나눠야 될 듯
For given radial functions $w_{\rm en}^{\pm}:\Gamma_0^{\pm}\to\mathbb{R}$, $S_{\rm en}^{\pm}:\Gamma_0^{\pm}\to\mathbb{R}$, and  $v_{\rm en}^\pm:\Gamma_0^{\pm}\to\mathbb{R}$,
find a weak solution ${\bf U}:=(\rho^{\pm},{\bf u}^{\pm},p^{\pm})$ to the Euler system \eqref{Euler} in $\Omega$ with a contact discontinuity 
\begin{equation*}
\mathfrak{C}_f:={\Omega}\cap\{r=f(x)\}
\end{equation*}
in the sense of Definition \ref{def-cd} such that the following properties hold:
\begin{itemize}
\item[(i)] $\rho^{\pm}$, ${\bf u}^{\pm}$, and $p^{\pm}$ are axisymmetric;
\item[(ii)] $f(0)=\frac{1}{2}$;
\item[(iii)] {\rm (Positivity of the density)} $\rho^{\pm}>0$ in $\overline{\Omega_f^{\pm}}$;
\item[(iv)] {\rm (Bernoulli invariant)} $B(\rho^{\pm},{\bf u}^{\pm},p^{\pm})\equiv B_0^{\pm}$ in $\overline{\Omega_f^{\pm}}$ for $B_0^{\pm}$ given in \eqref{S0-def};
\item[(v)] {\rm (Subsonic speed)} $|{\bf u}^{\pm}|<c(\rho^{\pm},p^{\pm})$ in $\overline{\Omega_f^{\pm}}$ for the sound speed $c(\rho,p):=\sqrt{\frac{\gamma p}{\rho}}$;
\item[(vi)] {\rm (Slip boundary condition)} ${\bf u}^{+}\cdot{\bf e}_r=0$ on $\Gamma_{\rm w}$;% for a normal vector field ${\bf n}$ on $\Gamma_{\rm w}^{\pm}$;
\item[(vii)] {\rm (Entrance data)} ${\bf u}^{\pm}\cdot(-{\bf e}_r)=v_{\rm en}^{\pm}$, ${\bf u}^{\pm}\cdot{\bf e}_{\theta}=w_{\rm en}^{\pm}$, and $\frac{p^{\pm}}{(\rho^{\pm})^{\gamma}}=S_{\rm en}^{\pm}$ on $\Gamma_0^{\pm}$;
%\item[(vii)] ${\bf u}^{\pm}\cdot(0,1)=0$ on $\Gamma_{f,L}^{\pm}$;% for a normal vector field ${\bf n}$ on $\Gamma_{f,L}^{\pm}$;
\item[(viii)] {\rm (Rankine-Hugoniot conditions)} ${\bf u}^{\pm}\cdot{\bf n}_f^{\pm}=0$ and $p^+=p^-$ on $\mathfrak{C}_{f}$ for outward unit normal vector fields ${\bf n}_f^{\pm}$ on ${\Omega_f^{\pm}}\cap \mathfrak{C}_f$.
\end{itemize}

\end{problem}
%%%%%%%%%%%%%%%%%%%%%Uniqueness는??

The main results of the paper are the following:

\begin{theorem}\label{main=Thm-infty}
Fix $\epsilon\in(0,\frac{1}{10})$ and $\alpha\in(0,1)$. 
Suppose that $w_{\rm en}^{\pm}:\Gamma_0^{\pm}\to\mathbb{R}$, $S_{\rm en}^{\pm}:\Gamma_0^{\pm}\to\mathbb{R}$, and  $v_{\rm en}^\pm:\Gamma_0^{\pm}\to\mathbb{R}$ are radial functions and the following conditions hold:
\begin{eqnarray}
%&&(S_{\rm en}^-,w_{\rm en}^-)=(S_0^-,0)\quad\mbox{on}\quad\Gamma_0^-\cap\left\{r<\epsilon\right\},\\
&\label{po-con}&S_{\rm en}^+=S_0^+,\quad w_{\rm en}^+\equiv 0,\\
&\label{ven-con}&v_{\rm en}^+(r)=0\quad\mbox{on}\quad\Gamma_0^+\backslash\left\{\frac{1}{2}+\epsilon\le r\le 1-\epsilon\right\}=:\Gamma_{0,\epsilon}^+,\\
&\label{ven-con-2}& v_{\rm en}^-(r)=0\quad\mbox{on}\quad\Gamma_0^-\backslash\left\{r\le \frac{1}{2}-\epsilon\right\}=:\Gamma_{0,\epsilon}^-.
\end{eqnarray}
Define $\sigma^{\pm}$ and $\sigma$ by 
\begin{equation*}
\begin{split}
&\sigma^+:=\|v_{\rm en}^+\|_{1,\alpha,\Gamma_0^{+}},\\
&\sigma^-:=\|S_{\rm en}^--S_0^-\|_{2,\alpha,\Gamma_0^{-}}+\|w_{\rm en}^-{\bf e}_{\theta}\|_{2,\alpha,\Gamma_0^{-}}+\|v_{\rm en}^-\|_{1,\alpha,\Gamma_0^{-}},\\
&\sigma:=\sigma^++\sigma^-.
\end{split}
\end{equation*}
%Under the same assumptions \eqref{ven-con}-\eqref{ven-con-2} as in Problem \ref{P-1-infty},
\begin{itemize}
\item[(a)] There exist small constants $\varepsilon_0>0$, independent of $u_0^+$, depending only on ($\rho_0^{\pm}$, $u_0^{-}$, $p_0$, $\gamma$, $\alpha$) and $\sigma_0>0$ depending only on ($\rho_0^{\pm}$, $u_0^{\pm}$, $p_0$, $\gamma$, $\alpha$) so that if 
\begin{equation*}
u_0^+\le \varepsilon_0\mbox{ and }
\sigma\le\sigma_0,
\end{equation*}
then Problem \ref{P-1-infty} has a solution $(f,\rho^{\pm},{\bf u}^{\pm},p^{\pm})$ that satisfies 
\begin{equation}\label{u-rho-p-est-infty}
%\begin{split}
\|f-\frac{1}{2}\|_{2,\alpha,(0,\infty)}+\|\rho^{\pm}-\rho_0^{\pm}\|_{1,\alpha,\Omega_f^{\pm}}+\|{\bf u}^{\pm}-{\bf u}_0^{\pm}\|_{1,\alpha,\Omega^{\pm}_f}+\|p^{\pm}-p_0\|_{1,\alpha,\Omega_f^{\pm}}\\
\le C\sigma
%\end{split}
\end{equation}
for a positive constant $C$ depending only on $\rho_0^{\pm}$, $u_0^{\pm}$, $p_0$, $\gamma$, and $\alpha$.
\item[(b)] There exists a constant $\sigma_0^{\ast}\in(0,\sigma_0]$ depending only on ($\rho_0^{\pm}$, $u_0^{\pm}$, $p_0$, $\gamma$, $\alpha$) so that if $\sigma\le\sigma_0^{\ast}$, then 
\begin{equation*}
\begin{split}
&\lim_{L\to\infty}\|f'(x)\|_{C^1(\{x\ge L\})}=0,\\
&\lim_{L\to\infty}\|{\bf u}^{\pm}\cdot{\bf e}_r(x,\cdot)\|_{C^1(\overline{\Omega_{f}^{\pm}\cap\{x>L\}})}=0,\\
&\lim_{L\to\infty}\|\partial_rp^{\pm}(x,\cdot)-\frac{\rho^{\pm}({\bf u}^{\pm}\cdot{\bf e}_{\theta})^2}{r}(x,\cdot)\|_{C^0(\overline{\Omega_{f}^{\pm}\cap\{x>L\}})}=0.
\end{split}
\end{equation*}
\end{itemize}
\end{theorem}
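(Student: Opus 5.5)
The plan follows the structure announced in the introduction, in three stages: reformulate, solve on truncated cylinders with length-independent bounds, then pass to the limit and study the far field.

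\textbf{Stage 1: Helmholtz reformulation.} In each phase $\Omega_f^{\pm}$ we write the axisymmetric velocity as
\begin{equation*}
{\bf u}^{\pm}=\nabla\varphi^{\pm}+\nabla\times(\phi^{\pm}{\bf e}_\theta)+\frac{\Lambda^{\pm}}{r}{\bf e}_\theta,
\end{equation*}
where $\Lambda^{\pm}=r\,{\bf u}^{\pm}\cdot{\bf e}_\theta$ is the angular momentum density. The entropy $S^{\pm}$ and $\Lambda^{\pm}$ are transported along streamlines. They are recovered from the stream function of $\rho^{\pm}{\bf u}^{\pm}$, using the entrance data $S_{\rm en}^{\pm}$ and $w_{\rm en}^{\pm}$.

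The density comes from the Bernoulli law $B\equiv B_0^{\pm}$ as a smooth function of $(S,|{\bf u}|^2)$ near the background state. This makes $\operatorname{div}(\rho{\bf u})=0$ a quasilinear elliptic equation for $\varphi^{\pm}$. The vorticity equation becomes a linear elliptic equation for $\phi^{\pm}$, with source terms involving $\partial_r S$ and $\partial_r\Lambda$.

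By \eqref{po-con}, the outer phase has $S^+\equiv S_0^+$ and $\Lambda^+\equiv 0$, so it is potential and we can take $\phi^+\equiv0$. The conditions ${\bf u}\cdot{\bf n}=0$ on $\mathfrak C_f$ and on $\Gamma_{\rm w}$ become Neumann-type conditions. The condition $p^+=p^-$ is rewritten through Bernoulli as
\begin{equation*}
\frac{|{\bf u}^-|^2}{2}-\frac{|{\bf u}^+|^2}{2}=\text{(a function of }S^{\pm},B_0^{\pm}\text{)}.
\end{equation*}
Linearized at ${\bf U}_0$, this is
\begin{equation*}
u_0^-\partial_x\varphi^--u_0^+\partial_x\varphi^+=\text{(higher order)},
\end{equation*}
and the free boundary is recovered from the kinematic condition $f'=(u_r/u_x)$ on $r=f$.

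\textbf{Stage 2: truncated problems.} On $\Omega\cap\{x<L\}$ we impose auxiliary exit conditions. We build an iteration map on a small ball in weighted H\"older spaces (weighted near the axis and near the corners). Given an iterate $(f,\varphi^\pm,\phi^-)$, we carry out three steps in order:
\begin{itemize}
\item[(i)] solve the transport problems for $(S^-,\Lambda^-)$;
\item[(ii)] solve the linearized elliptic transmission problem for $(\varphi^+,\varphi^-,\phi^-)$ in the fixed domains $\Omega_f^{\pm}$, with the linearized pressure condition on $r=f$;
\item[(iii)] update $f$ by integrating the kinematic relation from $f(0)=\frac12$.
\end{itemize}
The compatibility conditions \eqref{ven-con}--\eqref{ven-con-2} make the data vanish near the corners $r=\frac12$ and $r=1$ on $\Gamma_0$. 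This is what allows $C^{2,\alpha}$ regularity up to those corners.

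\textbf{Stage 3: limit and asymptotics.} The constants in the Stage 2 estimates are independent of $L$. Taking $L\to\infty$ along a subsequence and using Arzel\`a--Ascoli gives part (a) together with \eqref{u-rho-p-est-infty}. Undoing the reformulation then produces a weak solution in the sense of Definition \ref{def-cd}.

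For part (b), we note that $\partial_x$ of the solution satisfies a linear elliptic system with bounded coefficients. The uniform $L^2$ and energy bounds in $x$, which is the natural byproduct of the estimates, force $\|\nabla\partial_x\varphi,\ \partial_x\phi\|_{C^{1}(\{x>L\})}\to0$ after interpolation with the H\"older bounds. Combined with the boundary condition $u_r=0$ at $r=1$ and the Poincar\'e inequality in $r$, this gives $u_r\to0$ and $f'\to0$. The radial momentum equation is
\begin{equation*}
\rho(u_x\partial_xu_r+u_r\partial_ru_r)-\frac{\rho u_\theta^2}{r}+\partial_rp=0.
\end{equation*}
With $u_r\to0$ and $\partial_x u_r\to0$ this yields the third limit. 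Smallness of $\sigma\le\sigma_0^*$ is needed to absorb the nonlinear terms in the equations for $\partial_x$.

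\textbf{Main obstacle.} The hardest step is the uniform-in-$L$ solvability and estimate for the linear transmission problem in (ii). The pressure condition couples $\partial_x\varphi^+$ and $\partial_x\varphi^-$ across the interface, and on an infinite cylinder we have no decay to exploit. I would first prove an $H^1$ estimate via a weighted energy identity. Multiplying the two equations by $u_0^{\pm}\rho_0^{\pm}$-adapted test functions turns the interface terms into an exact derivative, and the sign $u_0^+<u_0^-$ together with the smallness $u_0^+\le\varepsilon_0$ gives coercivity. I would then upgrade to H\"older bounds by local Schauder estimates on unit-length slabs, whose constants do not depend on $L$. The condition $u_0^+\le\varepsilon_0$, with $\varepsilon_0$ independent of $u_0^+$, enters as a perturbation of the decoupled case $u_0^+=0$, where the transmission condition reduces to a Dirichlet-type condition for the inner phase.
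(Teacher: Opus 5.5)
\textbf{There is a gap, and it sits in the step you flag as the main obstacle: your mechanism for $L$-independent bounds does not work.} A global $H^1$ identity on $\Omega_L$ followed by Schauder estimates on unit slabs gives $L$-independent constants only if the energy of the solution is independent of $L$. Here it is not. The sources of the elliptic step are built from $S^--S_0^-$, $\Lambda^-$, $\partial_rS^-$ and $\partial_r\Lambda^-$. These are transported along streamlines and stay of size $\sigma$ all the way down the cylinder; for a generic iterate they do not decay at all. The downstream state is in general an $O(\sigma)$ shear flow different from ${\bf U}_0$. Hence $\|{\bf u}^--{\bf u}_0^-\|_{L^2(\Omega_{f,L}^-)}$, and with it the $H^1(\Omega_L)$ norm of the Helmholtz components, is of order $\sigma\sqrt{L}$, and your slab estimates inherit this growth.

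What you need is a sup bound that never sees $L$. The paper gets it from $x$-independent comparison functions ($\mathfrak{M}^-$ for ${\bf W}^-=\psi^-{\bf e}_\theta$, $\mathfrak{N}^\pm$ for $\varphi^\pm-\varphi_0^\pm$) and the comparison principle. It then obtains $C^{2,\alpha}$ bounds from Schauder theory plus odd/even reflection across $\Gamma_0$ and $\Gamma_L$, which is where \eqref{ven-con}--\eqref{ven-con-2} are used. That route needs scalar problems with Dirichlet, Neumann or Robin conditions. A transmission condition coupling $\partial_x\varphi^+$ and $\partial_x\varphi^-$ offers no comparison principle, so you lose the tool the paper relies on and would have to supply a localized estimate of some other kind.

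\textbf{The paper never solves a transmission problem, and this is the structural idea missing from your Stages 1--2.} First, your linearized pressure condition is not right as written. The $O(\sigma)$ perturbation of ${\bf u}^-\cdot{\bf e}_x$ contains the vortical term $(\nabla\times(\phi^-{\bf e}_\theta))\cdot{\bf e}_x$ (your notation), which is first order, not higher order. The coefficients should also be $\rho_0^\pm u_0^\pm$.

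Second, you do not say which interface conditions go into (ii). $\varphi^-$ and $\phi^-$ each solve a second-order equation, so together they need two conditions on $r=f$, one of which is a gauge choice. With $\varphi^+$ that makes three, while only two physical interface conditions may be imposed if the third is to determine $f$. If both slip conditions are imposed in (ii), the update in (iii) simply returns $f$.

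The paper settles this as in \eqref{dec-bd}:
\begin{itemize}
\item it fixes the gauge by $\varphi^-=\varphi_0^-$ on the interface;
\item it turns pressure continuity, together with minus-side slip, into a prescribed tangential component of $\nabla\times(\psi^-{\bf e}_\theta)$, i.e.\ the Robin condition $\nabla{\bf W}^-\cdot{\bf n}_{f_{\ast}}^--\mu_{f_{\ast}}{\bf W}^-=\mathcal{A}_{\ast}{\bf e}_\theta$, in which the plus phase enters only as data through $\mathfrak{Y}_{\ast}$;
\item it imposes plus-side slip as a Neumann condition for $\varphi^+$ with datum $u_0^+f_{\ast}'/\sqrt{1+|f_{\ast}'|^2}$;
\item it recovers minus-side slip only at the fixed point of the mass-flux update \eqref{S-curve}.
\end{itemize}
This is also where smallness of $u_0^+$ enters. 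That Neumann datum, of size $u_0^+\delta_2\sigma$, comes back through $\mathfrak{Y}_{\ast}$ into $\psi^-$ and $f$, and $u_0^+\le 1/(2C_\star)$ closes the $f$-iteration.

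\textbf{Part (b) has a similar hole.} An energy bound on $x$-derivatives is not a byproduct of the H\"older estimates. The paper proves $\int_0^\infty\!\int|\nabla\omega^\pm|^2\,dr\,dx\le C$ for $\omega^\pm=\partial_x\mathfrak{h}^\pm=-r\rho^\pm{\bf u}^\pm\cdot{\bf e}_r$ from its divergence-form equation. It uses $\omega^+=0$ on $r=1$, $\omega^-=0$ on $r=0$, and the conormal condition \eqref{co-no-bd} on $r=f(x)$. That condition is homogeneous in $\omega$ because $S^\pm$ and $\Lambda^\pm$ are constant along the interface streamline. Your sketch never supplies the interface condition for the differentiated system. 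Also, the claimed $C^1$ decay of $\nabla\partial_x\varphi$ would require third derivatives, which the estimates do not provide.
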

%%%%%%%%%%%%%
\begin{remark}
The smallness assumption on $u_0^+$ and \eqref{po-con} allows us to close the coupled iteration scheme while preserving the genuine two-phase coupling mechanism. %This is the reason why we consider a potential flow in $\Omega_f^+$ and we assume the condition \eqref{po-con}.
%The smallness condition on $u_0^+$ is required to close the coupled iteration scheme.
%This is the reason why we consider a potential flow in $\Omega_f^+$ and the conditions in \eqref{po-con}. 
%The conditions in \eqref{ven-con}-\eqref{ven-con-2} are compatibility conditions to apply the reflection method to resolve the corner singularity issues. 
\end{remark}
%%%%%%%%%%%%%%%%%%%%%%%%%%%%%%%%%%%%%%

%%%%%%%%%%%%%%%%%%%%%%%%%%%%%%%%%%%%%%

%\newpage
\section{Reformulation via Helmholtz decomposition}\label{S-3}
In this section, we reformulate Problem \ref{P-1-infty} and Theorem \ref{main=Thm-infty} by using the Helmholtz decomposition of the velocity vector field.
%%
%Throughout the paper, $\partial_{x_i}$ is abbreviated to $\partial_i$.

Define a function $\varrho$ by 
\begin{equation}\label{varrho-def}
\varrho(\zeta,{\bf q}):=\left(\frac{(\gamma-1)\left(B_0-\frac{1}{2}|{\bf q}|^2\right)}{\gamma\zeta}\right)^{\frac{1}{\gamma-1}}
\end{equation}
for $\zeta\in\mathbb{R}$, ${\bf q}\in\mathbb{R}^3$. 
We express $\rho$, ${\bf u}$, and $p$ as 
\begin{equation}\label{express}
 \rho=\varrho(S,\nabla\varphi+\nabla\times{\bf V}),\quad 
{\bf u}=\nabla\varphi+\nabla\times{\bf V},\quad p=S\varrho^{\gamma}(S,\nabla\varphi+\nabla\times{\bf V})
\end{equation}
for axisymmetric functions
\begin{equation*}
\varphi({\bf x})=\varphi(x,r),\quad {\bf V}({\bf x})=h_1(x,r){\bf e}_x+h_2(x,r){\bf e}_r+\psi(x,r){\bf e}_{\theta}.
\end{equation*}
A direct computation yields 
\begin{equation*}
{\bf u}
=\,\nabla\varphi+\nabla\times(\psi{\bf e}_{\theta})+\frac{\Lambda}{r}{\bf e}_{\theta}
=:\,{\bf q}(\nabla\varphi,\nabla\times(\psi{\bf e}_{\theta}),\frac{\Lambda}{r}{\bf e}_{\theta}),
\end{equation*}
where $\Lambda$ is an angular momentum density defined by 
\begin{equation}\label{def-lambda}
\Lambda(x,r):=r{\bf u}\cdot{\bf e}_{\theta}(x,r)=r(\partial_xh_2-\partial_rh_1)(x,r).
\end{equation}
Note that the decomposition of ${\bf u}$ is not unique and we will find $\Lambda$, not $h_1$ and $h_2$. 
With \eqref{express}-\eqref{def-lambda}, the Euler system \eqref{Euler} can be rewritten as a system for $(S,\Lambda,\varphi,\psi)$:
\begin{equation}\label{re-system-HD}
\left\{\begin{split}
&\mbox{div}(\varrho(S,{\bf q}){\bf q})=0,\\
&-\Delta(\psi{\bf e}_{\theta})=G(S,\Lambda,\partial_rS,\partial_r\Lambda,\nabla\varphi,{\bf t}){\bf e}_{\theta},\\
&\varrho(S,{\bf q}){\bf q}\cdot\nabla S=0,\\
&\varrho(S,{\bf q}){\bf q}\cdot\nabla \Lambda=0,
\end{split}\right.
\end{equation}
with ${\bf q}:={\bf q}(\nabla\varphi,\nabla\times(\psi{\bf e}_{\theta}),\frac{\Lambda}{r}{\bf e}_{\theta})$ and ${\bf t}:={\bf q}-\nabla\varphi$,
for $G$ defined by 
\begin{equation*}
G(\zeta_1,\zeta_2,\zeta_3,\zeta_4,{\bf a},{\bf b}):=\frac{1}{({\bf a}+{\bf b})\cdot{\bf e}_x}\left(\frac{\zeta_3\varrho^{\gamma-1}(\zeta_1,{\bf a}+{\bf b})}{\gamma-1}+\frac{\zeta_2\zeta_4}{r^2}\right)
\end{equation*}
for $\zeta_1,\zeta_2,\zeta_3,\zeta_4\in\mathbb{R}$, ${\bf a}$, ${\bf b}\in\mathbb{R}^3$.
%%%%%
%%%%%%%%%%%%%%%%

%One can easily check that if the following two-phase free boundary problem holds, then (i), (ii), (iv), (vi), (vii), and (viii) in Problem \ref{P-1-infty} hold.
We solve the following two-phase free boundary problem:

\begin{problem}\label{H-free-infty}\label{Problem-HD}
Under the same assumptions as in Theorem \ref{main=Thm-infty},
find a function $f:\mathbb{R}^{\ge0}\to(\frac{1}{4},\frac{3}{4})$ with $f(0)=\frac{1}{2}$  and an axisymmetric solution $(S^{\pm},\Lambda^{\pm},\varphi^{\pm},\psi^{\pm})$ to the following free boundary problem:
\begin{equation}\label{3D-Free-prob}
\left\{\begin{split}
&\mbox{\rm div}(\varrho^{\pm}(S^{\pm},{\bf q}^{\pm}){\bf q}^{\pm})=0,\\
&-\Delta(\psi^{\pm}{\bf e}_{\theta})=G^{\pm}(S^{\pm},\Lambda^{\pm},\partial_rS^{\pm},\partial_r\Lambda^{\pm},\nabla\varphi^{\pm},{\bf t}^{\pm}){\bf e}_{\theta},\\
&\varrho^{\pm}(S^{\pm},{\bf q}^{\pm}){\bf q}^{\pm}\cdot\nabla S^{\pm}=0,\\
&\varrho^{\pm}(S^{\pm},{\bf q}^{\pm}){\bf q}^{\pm}\cdot\nabla \Lambda^{\pm}=0,
\end{split}\right.\quad\mbox{in}\,\,\Omega_{f}^{\pm}
\end{equation}
with the boundary conditions
\begin{equation}\label{3D-Free-bd}%%psi컨디션 체크
\left\{
\begin{split}
\partial_r\varphi^{\pm}=-v_{\rm en}^{\pm},\quad \partial_x\psi^{\pm}=0,\quad S^{\pm}=S_{\rm en}^{\pm},\quad\Lambda^{\pm}=rw_{\rm en}^{\pm}\quad&\mbox{on}\,\,\Gamma_{0}^{\pm},\\
\partial_r\varphi^+=0,\quad \partial_x\psi^+=0\quad&\mbox{on}\,\,\Gamma_{\rm w},\\
\nabla\varphi^{\pm}\cdot{\bf n}_f^{\pm}+(\nabla\times\psi^{\pm}{\bf e}_{\theta})\cdot{\bf n}_f^{\pm}={0}\quad&\mbox{on}\,\,\mathfrak{C}_{f},\\
S^+(\varrho^+)^{\gamma}(S^{+},{\bf q}^+)=S^-(\varrho^-)^{\gamma}(S^{-},{\bf q}^-) \quad&\mbox{on}\,\,\mathfrak{C}_{f}\\
%\varphi^{\pm}=\varphi^{\pm}_0(L,\cdot),\quad \partial_x\psi^{\pm}=0\quad&\mbox{on}\,\,\Gamma_{f,L}^{\pm},\\
\end{split}
\right.
\end{equation}
for ${\bf q}^{\pm}:={\bf q}(\nabla\varphi^{\pm},\nabla\times(\psi^{\pm}{\bf e}_{\theta}),\frac{\Lambda^{\pm}}{r}{\bf e}_{\theta})$, ${\bf t}^{\pm}:={\bf q}^{\pm}-\nabla\varphi^{\pm}$, and
\begin{equation*}
{\bf n}_f^{\pm}:=\frac{\pm f'(x_1){\bf e}_x\mp{\bf e}_r}{\sqrt{1+|f'(x_1)|^2}},
\end{equation*}
where $\varrho^{\pm}$, $G^{\pm}$, and $\varphi_0^{\pm}$ are defined by 
\begin{equation}\label{3D-varphi0}
\begin{split}
&\varrho^{\pm}(\zeta,{\bf q}):=\left(\frac{(\gamma-1)\left(B_0^{\pm}-\frac{1}{2}|{\bf q}|^2\right)}{\gamma\zeta}\right)^{\frac{1}{\gamma-1}},\\
&G^{\pm}(\zeta_1,\zeta_2,\zeta_3,\zeta_4,{\bf a},{\bf b}):=\frac{1}{({\bf a}+{\bf b})\cdot{\bf e}_x}\left(\frac{\zeta_3(\varrho^{\pm})^{\gamma-1}(\zeta_1,{\bf a}+{\bf b})}{\gamma-1}+\frac{\zeta_2\zeta_4}{r^2}\right),\\
&\varphi_0^{\pm}({\bf x})=u_0^{\pm}x_1%,\quad \varphi_0^{-}({\bf x}):=u_0^{-}x_1
\end{split}
\end{equation}
for $\zeta\in\mathbb{R}$, ${\bf q}\in\mathbb{R}^3$, $\zeta_1,\zeta_2,\zeta_3,\zeta_4\in\mathbb{R}$, ${\bf a}$, ${\bf b}\in\mathbb{R}^3$, and ${\bf x}=(x_1,x_2,x_3)\in\mathbb{R}^3$.\end{problem}

A direct computation shows that  if the boundary conditions \eqref{3D-Free-bd} for $(S^{\pm},\Lambda^{\pm}, \varphi^{\pm},\psi^{\pm})$ hold, then the boundary conditions (vi)-(viii) in Problem \ref{P-1-infty}  for $(\rho^{\pm},{\bf u}^{\pm},p^{\pm})$  hold.

\begin{theorem}\label{3D-Theorem-HD}
For any fixed $\alpha\in(0,1)$,
there exist small constants $\varepsilon_1>0$, independent of $u_0^+$, depending only on ($\rho_0^{\pm}$, $u_0^{-}$, $p_0$, $\gamma$, $\alpha$) and $\sigma_1>0$ depending only on ($\rho_0^{\pm}$, $u_0^{\pm}$, $p_0$, $\gamma$, $\alpha$) so that if 
\begin{equation*}
u_0^+\le\varepsilon_1\mbox{ and }
\sigma\le \sigma_1,
\end{equation*}
then Problem \ref{H-free-infty} has a  solution $(f,S^{\pm},\Lambda^{\pm},\varphi^{\pm},\psi^{\pm})$ that satisfies 
\begin{equation}\label{3D-est-Thm-HD}
\begin{split}
\|f-\frac{1}{2}\|_{2,\alpha,(0,+\infty)}+\|S^{\pm}-S_0^{\pm}\|_{1,\alpha,\Omega^{\pm}_f}+\|\frac{\Lambda^{\pm}}{r}{\bf e}_{\theta}\|_{1,\alpha,\Omega^{\pm}_f}&\\
+\|\varphi^{\pm}-\varphi_0^{\pm}\|_{2,\alpha,\Omega^{\pm}_f}+\|\psi^{\pm}{\bf e}_{\theta}\|_{2,\alpha,\Omega^{\pm}_f}&
\le C\sigma
\end{split}
\end{equation}
for a positive constant $C$ depending only on $\rho_0^{\pm}$, $u_0^{\pm}$, $p_0$, $\gamma$, and $\alpha$. 
%In the above, $S_0^{\pm}$ and $\varphi_0^{\pm}$ are defined by \eqref{S0-def} and \eqref{3D-varphi0}.
\end{theorem}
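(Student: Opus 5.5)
We prove Theorem \ref{3D-Theorem-HD} by solving the problem first in truncated cylinders $\Omega_L:=\Omega\cap\{x_1<L\}$ and then letting $L\to\infty$. On the exit $\Gamma_L:=\partial\Omega\cap\{x_1=L\}$ we impose $\varphi^{\pm}=\varphi_0^{\pm}(L,\cdot)$ and $\partial_x\psi^{\pm}=0$. For fixed $L$, the truncated problem is solved by a fixed point on an iteration set $\mathcal{J}_L(M)$. It consists of $(f,S^{\pm},\Lambda^{\pm},\varphi^{\pm},\psi^{\pm})$ with $f(0)=\frac12$, and the norm on the left of \eqref{3D-est-Thm-HD} (taken over $\Omega_{f,L}^{\pm}$) is at most $M\sigma$. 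Because of \eqref{po-con}, the transport equations in $\Omega_f^+$ give $S^+\equiv S_0^+$ and $\Lambda^+\equiv0$. Hence $G^+\equiv0$, so we take $\psi^+\equiv0$ and the plus phase is potential; only $\varphi^+$ and $f$ must be found there.

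Given an iterate $(\tilde f,\tilde S^-,\tilde\Lambda^-,\tilde\varphi^{\pm},\tilde\psi^-)$, one step of the iteration does the following.
\begin{itemize}
\item[(1)] Flatten $\Omega_{\tilde f}^{\pm}$ by a change of the radial variable. Define the stream function $\tilde w$ of $\varrho^-\tilde{\bf q}^-$; the Lagrangian mass flux is invariant, so the streamlines through $\Gamma_0^-$ foliate $\Omega^-_{\tilde f}$. Set $S^-$ and $\Lambda^-$ by transporting $S^-_{\rm en}$ and $rw^-_{\rm en}$ along these streamlines. This gives $C^{1,\alpha}$ bounds by $C\sigma^-$, uniform in $L$.
\item[(2)] Solve the linear Dirichlet/Neumann problem for $\psi^-{\bf e}_\theta$ with the frozen source $G^-$. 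The weighted 3D Schauder estimate for the vector Laplacian handles the axis $r=0$.
\item[(3)] Linearize the two potential equations about $u_0^{\pm}$ and solve a coupled linear elliptic transmission problem for $(\varphi^+,\varphi^-)$ in the two flattened domains. The conditions are: Neumann data on $\Gamma_0^{\pm}$ and $\Gamma_{\rm w}$, Dirichlet data on $\Gamma_L$, the slip condition on $r=\tilde f$ from the third line of \eqref{3D-Free-bd}, and the pressure condition written via Bernoulli. Linearized, this becomes $\rho_0^+u_0^+\partial_x\varphi^+-\rho_0^-u_0^-\partial_x\varphi^-=$ (terms in $S^-,\Lambda^-,\psi^-$ and quadratic errors).
\item[(4)] Update $f$ from the kinematic condition $f'=(\text{radial}/\text{axial})$ velocity of the minus phase on $r=\tilde f$ with $f(0)=\frac12$. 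Equivalently, $f$ is determined by the mass flux: $\int_0^{f(x)}\varrho^- q^-_x\,r\,dr=$ const. The flux form gives $\|f-\frac12\|_{2,\alpha}$ controlled by $\|\nabla\varphi^-\|_{1,\alpha}$ and $\|\psi^-\|_{2,\alpha}$ without derivative loss.
\end{itemize}

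The compatibility conditions \eqref{ven-con} and \eqref{ven-con-2} let us reflect evenly across $x=0$ near the corners $\Gamma_0\cap\Gamma_{\rm w}$ and $\Gamma_0\cap\mathfrak C_f$, which yields $C^{2,\alpha}$ up to the corners. A Schauder fixed point in a weaker $C^{1,\alpha/2}$ topology on the compact convex set $\mathcal J_L(M)$ then gives a solution for each $L$. Continuity of the map follows from uniqueness of the linear problems.

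The main obstacle is step (3): we need $L$-independent estimates for the coupled transmission problem. Eliminating $\partial_x\varphi$ along the interface, the pressure continuity is a nonlocal oblique condition tying the two phases. Its coefficient $\rho_0^+u_0^+$ is small, so the condition is a perturbation of a decoupled problem. There the minus phase sees essentially $\partial_x\varphi^-=$ (data) on the interface, which after integration in $x$ is a Dirichlet condition. The plus phase sees the Neumann (slip) condition. Our plan for the obstacle:
\begin{itemize}
\item Prove the decoupled estimates first. For the plus phase, write $\varphi^+$ as a potential with homogeneous Neumann conditions on the lateral boundary and zero Dirichlet data on $\Gamma_L$. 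Get $L$-uniform $H^1$ bounds from a Poincaré inequality in $x$ with weight, using the exponential decay of cross-sectional modes (the first nonzero Neumann eigenvalue of the annulus). Upgrade these to local Schauder bounds on unit-length pieces, which are translation invariant.
\item Treat the coupling by smallness. The coupling terms carry the factor $u_0^+$, and $u_0^+\le\varepsilon_1$ with $\varepsilon_1$ independent of $u_0^+$'s own constants, so the coupling is absorbed by contraction. This is exactly why $\varepsilon_1$ may depend only on $(\rho_0^{\pm},u_0^-,p_0,\gamma,\alpha)$.
\item Close the fixed point. Choose $M$ large, then $\sigma_1$ small, so that the quadratic errors are $\le CM^2\sigma^2\le M\sigma$.
\end{itemize}

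Finally, the bounds in $\mathcal{J}_L(M)$ are uniform in $L$. Arzelà–Ascoli along $L_k\to\infty$, with a diagonal argument on compact sets, gives a subsequence converging in $C^{2,\alpha'}_{\rm loc}$ for $f,\varphi^\pm,\psi^-$ and in $C^{1,\alpha'}_{\rm loc}$ for $S^-,\Lambda^-$. The limit solves Problem \ref{H-free-infty}, and \eqref{3D-est-Thm-HD} survives by lower semicontinuity of Hölder norms.
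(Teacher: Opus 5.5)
\textbf{Gap in step (3).} Your architecture matches the paper: truncation, transport of $S^-,\Lambda^-$ along streamlines, the mass-flux update of $f$, a Schauder fixed point, and $L\to\infty$. However, step (3) assigns the pressure condition to the wrong Helmholtz variable, and this breaks the $L$-uniform bounds.

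If $\varphi^-$ carries the pressure condition, then on the interface $\partial_{\bm\tau}(\varphi^--\varphi_0^-)=g$ with $g=O(\sigma)$, and nothing forces $g$ to have zero mean. For instance, $S^-\equiv S^-_{\rm en}(\frac12)$ along the interface streamline, and $S^-_{\rm en}(\frac12)\ne S_0^-$ is allowed. By Bernoulli, the required tangential speed of the minus phase then contains a constant shift proportional to $S^-_{\rm en}(\frac12)-S_0^-$ along the whole interface. The plus-phase pressure perturbation carries the factor $\rho_0^+u_0^+$, so it cannot cancel this shift without making $\varphi^+-\varphi_0^+$ grow instead. A $\psi^-$ computed in step (2) from a Dirichlet/Neumann condition unrelated to the pressure does not see the shift either. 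Integrating along the interface therefore produces Dirichlet data for $\varphi^--\varphi_0^-$ of size $\sim x\sigma$.

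This causes two failures. In $\Omega_L$ the data is incompatible with $\varphi^-=\varphi_0^-(L,\cdot)$ at the corner $(L,\tilde f(L))$. More fundamentally, $\|\varphi^--\varphi_0^-\|_0$ cannot be bounded independently of $L$, whereas \eqref{3D-est-Thm-HD} requires exactly that on the infinite cylinder. This already happens in your decoupled model ($u_0^+\to0$), so smallness of the coupling cannot repair it.

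\textbf{The missing idea and a second gap.} The missing idea is the paper's gauge choice \eqref{dec-bd}. On the interface it sets $\varphi^-=\varphi_0^-$, which is homogeneous Dirichlet data for $\phi^-$. It puts the whole tangential velocity into the curl part, $(\nabla\times\psi^-{\bf e}_\theta)\cdot\bm\tau_{f_\ast}=\sqrt{\mathfrak Y_\ast-(\Lambda_\ast/f_\ast)^2}-\nabla\varphi_0^-\cdot\bm\tau_{f_\ast}$. This is the Robin condition $\nabla{\bf W}^-\cdot{\bf n}^-_{f_\ast}-\mu_{f_\ast}{\bf W}^-=\mathcal A_\ast{\bf e}_\theta$ with $-\mu_{f_\ast}>0$. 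A uniform axial shift $c$ is then carried by the bounded field $\psi^-\approx cr/2$. Both minus-phase problems have coercive lateral conditions and $L$-uniform bounds. The plus-phase pressure enters only as data through $\mathfrak Y_\ast$, and the minus-phase slip condition is recovered at the fixed point of the mass-flux map, as in your step (4).

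Separately, your first bullet does not give $L$-independence for $\varphi^+$. The spectral gap of the annulus controls only the nonzero cross-sectional modes. The cross-sectional mean has Neumann conditions on both lateral sides and Dirichlet conditions at $x=0,L$, so it solves a one-dimensional problem on $[0,L]$ whose Poincar\'e constant is of order $L$. For example, the slip data $u_0^+\tilde f'$ with $\tilde f-\frac12$ changing sign once on $[0,L]$ produce a mean of size $\sim u_0^+\|\tilde f-\frac12\|_{0}L$. This point needs an additional argument. You cannot simply import the paper's comparison function $\mathfrak N^+$ for it, because $\mathfrak N^+$ contains $-\mathfrak b_1\theta^2$ and is not single-valued in $\theta$.
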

Hereafter, a constant is said to be chosen depending only on the data if it is chosen depending only on $\rho_0^{\pm}$, $u_0^{\pm}$, $p_0$, $\gamma$, and $\alpha$.

The remainder of the paper is devoted to the proof of Theorem \ref{3D-Theorem-HD}.
Once Theorem \ref{3D-Theorem-HD} is established, Theorem \ref{main=Thm-infty} follows immediately.

%%%%%%%%%%%%%%%%%%%%%%%%%%%%%%%%%%%%
%%%%%%%%%%%%%%%%%%%%%%%%%%%%%%%%%%%%

\section{Free boundary problems in truncated cylinders}\label{S-4}

Unlike the one-phase free boundary problem studied in \cite{bae2019contact3D}, the present free boundary problem couples two unknown flow fields through the pressure continuity condition across the contact discontinuity.
Since the pressure continuity condition involves the traces of both Euler states on the unknown interface, neither phase can be solved independently while keeping the interface fixed.
Consequently, the free boundary and the two Euler phases cannot be constructed sequentially.
Instead, they must be determined simultaneously within a coupled iteration framework.
The purpose of this section is to construct such an iteration map and establish its solvability.

\subsection{Problem and Proposition}
For a fixed constant $L>10$, define a cylinder $\Omega_L$ of length $L$ by
\begin{equation*}
\Omega_L:=\Omega\cap\{x<L\}.
\end{equation*}
We denote the wall and the exit of the nozzle $\Omega_L$ as follows:
\begin{equation*}
%\Gamma_0:=\partial\Omega\cap\{x_1=0\},\quad
\Gamma_{{\rm w},L}:=\partial\Omega_L\cap\{r=1\},
\quad\Gamma_L:=\partial\Omega_L\cap\{x=L\}.
\end{equation*}
For a function $f:\mathbb{R}^{\ge0}\to(\frac{1}{4},\frac{3}{4})$ with $f(0)=\frac{1}{2}$, define  $\Omega_{f,L}^+$, $\Omega_{f,L}^-$, and $\mathfrak{C}_{f,L}$ by 
\begin{equation*}
\begin{split}
&\Omega_{f,L}^+:=\Omega_L\cap\{f(x)<r<1\},\quad \Omega_{f,L}^-:=\Omega_L\cap\{r<f(x)\},\\
&\mathfrak{C}_{f,L}:={\Omega_L}\cap\{r=f(x)\}.
\end{split}
\end{equation*}
For each region of $\Omega_{f,L}^{\pm}$, we denote the entrance  and the exit 
of the nozzle as follows:
\begin{equation*}
\begin{split}
&\Gamma_{0}^\pm:=\partial\Omega_f^\pm\cap\Gamma_0,
\quad \Gamma_{f,L}^{\pm}:=\partial\Omega_{f,L}^{\pm}\cap\Gamma_L.
\end{split}
\end{equation*}
As defined in \eqref{ven-con}-\eqref{ven-con-2}, $\Gamma_{0,\epsilon}^{\pm}$ are defined by 
\begin{equation*}
\Gamma_{0,\epsilon}^+:=\Gamma_0^+\backslash\left\{\frac{1}{2}+\epsilon\le r\le 1-\epsilon\right\},\quad
\Gamma_{0,\epsilon}^-:=\Gamma_0^-\backslash\left\{r\le \frac{1}{2}-\epsilon\right\}.
\end{equation*}

\begin{problem}\label{H-free}
Under the same assumptions as in Theorem \ref{main=Thm-infty},
find a function $f:[0,L]\to(\frac{1}{4},\frac{3}{4})$ and an axisymmetric solution $(S^{\pm},\Lambda^{\pm},\varphi^{\pm},\psi^{\pm})$ to the following free boundary value problem:
\begin{equation}\label{3D-Free-prob-finite}
\left\{\begin{split}
&\mbox{\rm div}(\varrho^{\pm}(S^{\pm},{\bf q}^{\pm}){\bf q}^{\pm})=0,\\
&-\Delta(\psi^{\pm}{\bf e}_{\theta})=G^{\pm}(S^{\pm},\Lambda^{\pm},\partial_rS^{\pm},\partial_r\Lambda^{\pm},\nabla\varphi^{\pm},{\bf t}^{\pm}){\bf e}_{\theta},\\
&\varrho^{\pm}(S^{\pm},{\bf q}^{\pm}){\bf q}^{\pm}\cdot\nabla S^{\pm}=0,\\
&\varrho^{\pm}(S^{\pm},{\bf q}^{\pm}){\bf q}^{\pm}\cdot\nabla \Lambda^{\pm}=0,
\end{split}\right.\quad\mbox{in}\quad\Omega_{f,L}^{\pm}
\end{equation}
with the boundary conditions
\begin{equation}\label{3D-Free-bd-finite}%%psi컨디션 체크
\left\{
\begin{split}
\partial_r\varphi^{\pm}=-v_{\rm en}^{\pm},\quad \partial_x\psi^{\pm}=0,\quad S^{\pm}=S_{\rm en}^{\pm},\quad\Lambda^{\pm}=rw_{\rm en}^{\pm}\quad&\mbox{on}\,\,\Gamma_{0}^{\pm},\\
\partial_r\varphi^+=0,\quad \partial_x\psi^+=0\quad&\mbox{on}\,\,\Gamma_{{\rm w},L},\\
\nabla\varphi^{\pm}\cdot{\bf n}_f^{\pm}+(\nabla\times\psi^{\pm}{\bf e}_{\theta})\cdot{\bf n}_f^{\pm}={0}\quad&\mbox{on}\,\,\mathfrak{C}_{f,L},\\
S^+(\varrho^+)^{\gamma}(S^{+},{\bf q}^+)=S^-(\varrho^-)^{\gamma}(S^{-},{\bf q}^-) \quad&\mbox{on}\,\,\mathfrak{C}_{f,L},\\
\varphi^{\pm}=\varphi^{\pm}_0(L,\cdot),\quad \partial_x\psi^{\pm}=0\quad&\mbox{on}\,\,\Gamma_{f,L}^{\pm}
\end{split}
\right.
\end{equation}
for $\varrho^{\pm}$, $G^{\pm}$, and $\varphi_0^{\pm}$ given in \eqref{3D-varphi0}.
\end{problem}

%%%%%%%%%
 
A crucial difference from \cite{bae2019contact3D} is that the boundary data are no longer generated by a prescribed background state.
Instead, they arise from the pressure matching condition between two unknown Euler phases.
This feature introduces an additional nonlinear coupling that is absent in the one-phase setting.
%%%%%%%%%%

%%%%%%%%%%%%%%%%%%%%%%%%%%%%%%%%%%%%%%

\begin{proposition}\label{Theorem-HD}
For a fixed $\alpha\in(0,1)$,
there exist small constants $\varepsilon_1^{\star}>0$ depending only on ($\rho_0^{\pm}$, $u_0^{-}$, $p_0$, $\gamma$, $\alpha$) and  $\sigma_1^{\star}>0$ depending only on ($\rho_0^{\pm}$, $u_0^{\pm}$, $p_0$, $\gamma$, $\alpha$)  but independent of $L$ so that if 
\begin{equation*}
u_0^+\le\varepsilon_1^{\star}\mbox{ and }
\sigma\le \sigma_1^{\star},
\end{equation*}
then Problem \ref{H-free} has a unique solution $(f,S^{\pm},\Lambda^{\pm},\varphi^{\pm},\psi^{\pm})$ that satisfies 
\begin{equation}\label{3D-est-Thm-HD-finite}
\begin{split}
\|f-\frac{1}{2}\|_{2,\alpha,(0,L)}+\|S^{\pm}-S_0^{\pm}\|_{1,\alpha,\Omega^{\pm}_{f,L}}+\|\frac{\Lambda^{\pm}}{r}{\bf e}_{\theta}\|_{1,\alpha,\Omega^{\pm}_{f,L}}&\\
+\|\varphi^{\pm}-\varphi_0^{\pm}\|_{2,\alpha,\Omega^{\pm}_{f,L}}+\|\psi^{\pm}{\bf e}_{\theta}\|_{2,\alpha,\Omega^{\pm}_{f,L}}&
\le C\sigma
\end{split}
\end{equation}
for a positive constant $C$ depending only on the data %$\rho_0^{\pm}$, $u_0^{\pm}$, $p_0$, $\gamma$, and $\alpha$ 
but independent of $L$.
In particular, $\varepsilon_1^{\star}$ is independent of both the truncation length $L$ and the value of $u_0^+$.
\end{proposition}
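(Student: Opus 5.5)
We construct the solution of Problem \ref{H-free} as a fixed point of a coupled iteration map defined on a small ball of approximate solutions. The iteration set consists of $(f,\tilde S^-,\tilde\Lambda^-,\tilde\varphi^{\pm},\tilde\psi^-)$ satisfying the norm bounds in \eqref{3D-est-Thm-HD-finite} with right-hand side $M\sigma$, where $M$ is a large constant to be fixed later. On $\Omega_{f,L}^+$ we set $S^+\equiv S_0^+$, $\Lambda^+\equiv0$ and $\psi^+\equiv0$. This choice is consistent because of \eqref{po-con}: the transport equations propagate constant entropy and zero angular momentum, so $G^+=0$ and the outer phase is purely potential. To make the free boundary a fixed domain, we flatten it by the map $(x,r)\mapsto(x,\,r/(2f(x)))$ on $\Omega_{f,L}^-$ and by the corresponding affine map in $r$ onto $\{1/2<r<1\}$ on $\Omega_{f,L}^+$. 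This places both phases in fixed cylinders at the cost of $f$-dependent coefficients.

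\textbf{Transport step.} Given an iterate, define the stream function of the inner phase from $\varrho^-(\tilde S^-,\tilde{\bf q}^-)\tilde{\bf q}^-$. Because the condition $\nabla\varphi^-\cdot{\bf n}^-_f+(\nabla\times\psi^-{\bf e}_\theta)\cdot{\bf n}^-_f=0$ is imposed, $\mathfrak{C}_{f,L}$ is a level set of this stream function. We then solve for $S^-$ and $\Lambda^-$ by composing the entrance data with the inverse of the stream function. This yields $C^{1,\alpha}$ bounds of size $C\sigma^-$, independent of $L$, since the flow is close to the uniform flow $u_0^-{\bf e}_x$. The loss of one derivative from $S_{\rm en}^-$ and $w_{\rm en}^-$ is absorbed by assuming these data in $C^{2,\alpha}$.

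\textbf{Elliptic step.} With $S^-$ and $\Lambda^-$ fixed, solve the linearized equations for $\psi^-$ (a vector Poisson equation for $\psi^-{\bf e}_\theta$, with $\psi^-=0$ on $\mathfrak C$ in the flattened domain and Neumann conditions in $x$ at the ends) and for $\varphi^{\pm}$ (a uniformly elliptic equation in divergence form for each phase). The boundary conditions are the Neumann data at the entrance and the wall, the Dirichlet data $\varphi_0^\pm(L,\cdot)$ at the exit, and a Neumann condition on the flattened interface coming from the slip condition, whose leading term is $\pm u_0^\pm f'$. The $L$-independent $C^{2,\alpha}$ estimates follow from the compatibility assumptions \eqref{ven-con}-\eqref{ven-con-2}, which allow an even reflection across $x=0$ near the corners. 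Weighted or uniform Schauder estimates then hold on unit cubes, and $L$-independent $L^\infty$ bounds come from a barrier or energy argument that uses the Dirichlet condition at the exit together with a Poincaré inequality in the cross-section. The free boundary is updated through the linearized pressure continuity condition,
\[
\rho_0^-u_0^-\partial_x\varphi^- - \rho_0^+u_0^+\partial_x\varphi^+ = (\text{terms from } S^-,\Lambda^-,\psi^-) + \text{quadratic terms}.
\]
Combined with the interface Neumann conditions, this forms a transmission problem that determines $f'$. Integrating gives $f$ with $f(0)=1/2$, and $f'$ inherits $C^{1,\alpha}$ regularity from traces of $\nabla\varphi^\pm\in C^{1,\alpha}$.

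\textbf{Main obstacle.} The main difficulty is solving this coupled transmission problem with bounds uniform in $L$. The condition $u_0^+\le\varepsilon_1^\star$ is what closes it. The outer phase enters the pressure condition only through $\rho_0^+u_0^+\partial_x\varphi^+$ and its interface Neumann datum is $u_0^+f'$, so the outer contribution is $O(u_0^+)$ relative to the inner one. I would therefore first solve the inner problem with the interface condition fixed by the pressure equation, treating the outer phase as a perturbation. This yields a Neumann-to-Dirichlet type relation that determines $f'$ from the inner phase, and the outer feedback becomes a contraction factor $Cu_0^+$ with $C$ depending only on $(\rho_0^\pm,u_0^-,p_0,\gamma,\alpha)$; this is why $\varepsilon_1^\star$ does not depend on $u_0^+$. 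Finally, choose $M$ large and then $\sigma_1^\star$ small so that the iteration maps the ball into itself and is a contraction in the weaker $C^{1,\alpha}\times C^{\alpha}$ norms. The Banach fixed point theorem then gives existence and uniqueness, with the estimate \eqref{3D-est-Thm-HD-finite}.
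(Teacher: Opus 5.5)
\textbf{Verdict: there is a genuine gap.} The problem is how you split the two interface conditions between the Helmholtz variables, and it breaks $L$-independence.

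\textbf{What matches the paper.} Your overall mechanism is the paper's:
pressure continuity is imposed on the inner (rotational) phase and slip on the outer (potential) phase;
$f$ is read off from the inner kinematics;
the outer feedback, through $p^+$ and the Neumann datum $u_0^+f'$, is made contractive by the smallness of $u_0^+$.

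\textbf{The gap.} You set $\psi^-=0$ on $\mathfrak{C}_{f,L}$ and let $\varphi^-$ carry the tangential speed, either via the linearized pressure relation for $\partial_x\varphi^-$ or via slip with pressure updating $f$. As long as $\|\varphi^--\varphi_0^-\|_{C^0}$ stays bounded, the Dirichlet data at $x=0$ and $x=L$ pin the mean of $\partial_x\varphi^-$ along $\mathfrak{C}_{f,L}$ to $u_0^-+O(1/L)$. Pressure continuity, however, demands a speed that differs from $u_0^-$ by an amount that does not decay downstream, because $S^-$ and $\Lambda^-$ are constant on that streamline.

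A concrete case: take $v^\pm_{\rm en}=w^-_{\rm en}=0$ and $S^-_{\rm en}\equiv S_0^-+\delta$. The solution is two uniform flows with $f\equiv\frac12$ and inner speed $u_1^-=u_0^-+O(\delta)$, with $u_1^-\neq u_0^-$.
\begin{itemize}
\item In your gauge $G^-=0$ forces $\psi^-\equiv0$, so $\varphi^-=u_1^-x+c$.
\item The exit condition then forces $c=(u_0^--u_1^-)L$, so $\|\varphi^--\varphi_0^-\|_{C^0}\sim|\delta|L$.
\item With the entrance constant fixed as in \eqref{Fix-bd-lower}, there is no solution satisfying \eqref{3D-est-Thm-HD-finite} for large $L$.
\end{itemize}
The same obstruction makes your transmission problem for $f'$ unsolvable uniformly in $L$. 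Likewise, integrating the pressure relation along $\mathfrak{C}_{f,L}$ to get Dirichlet data for $\varphi^-$ misses the exit value $\varphi_0^-(L,\cdot)$ by $O(\sigma L)$.

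\textbf{The missing idea} is the decomposition \eqref{dec-bd}.
\begin{itemize}
\item Impose $\varphi^-=\varphi_0^-$ on $\mathfrak{C}_{f_*,L}$, which is compatible with the data at both corners.
\item Put the entire pressure condition into $(\nabla\times\psi^-\mathbf{e}_\theta)\cdot\boldsymbol{\tau}_{f_*}$. This is the Robin condition $\nabla\mathbf{W}^-\cdot\mathbf{n}^-_{f_*}-\mu_{f_*}\mathbf{W}^-=\mathcal{A}_*\mathbf{e}_\theta$, with $\mu_{f_*}<0$ and with $\mathfrak{Y}_*$ containing the trace of $\nabla\varphi^+$.
\item The persistent defect is then carried by a bounded $\psi^-$. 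In the example, $\psi^-=\frac{u_1^--u_0^-}{2}r$, which lies in the kernel of the vector Laplacian.
\item The favorable sign of $\mu_{f_*}$ gives the $L$-independent barrier $\mathfrak{M}^-$.
\item $f$ is updated by the mass-flux identity \eqref{S-curve}, whose fixed points are exactly the inner slip condition.
\end{itemize}

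\textbf{Secondary points.}

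The entrance condition $\partial_r\varphi^\pm=-v^\pm_{\rm en}$ is tangential, not Neumann. The paper integrates it to Dirichlet data, subtracts $\mathfrak{g}^\pm_{\rm en}$, and reflects \emph{oddly} across $\Gamma^\pm_{0,\epsilon}$. That is where \eqref{ven-con}--\eqref{ven-con-2} enter, through $\mathfrak{H}^\pm=0$ there.

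A cross-sectional Poincar\'e inequality needs Dirichlet data on part of each cross-section. The paper's split supplies this for $\varphi^-$. The outer phase has slip on both $r=f$ and $r=1$, so the cross-sectional mean of $\phi^+$ needs a separate argument. The paper's barrier $\mathfrak{N}^+$ does not settle this either:
\begin{itemize}
\item its $\theta^2$ term is not single-valued on the cylinder;
\item $\mathfrak{H}^+=-\epsilon\chi(x)$, with $\chi\equiv1$ on $[1,L-1]$ and zero Neumann data, already gives $\sup|\phi_h^+|\sim\epsilon L^2$ while $\mathfrak{n}^+\sim\epsilon$.
\end{itemize}

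In a single-loop scheme the iterate's mass flux is neither divergence-free nor tangent to $\mathfrak{C}_{f,L}$, so the interface is not a level set of its stream function. This is why the paper nests the iteration. It solves \eqref{Free-1-prob}--\eqref{Free-1-bd} exactly for frozen $(S^-_*,\Lambda^-_*)$, so that \eqref{trans-1}--\eqref{trans-2} hold before \eqref{lem-ini} is solved. It then uses Schauder's theorem at each level and proves uniqueness separately by difference estimates in the strong norms, using $S^-_{\rm en},w^-_{\rm en}\in C^{2,\alpha}$. A Banach contraction in weaker norms, as you propose, would additionally have to control the $\partial_rS^-$ and $\partial_r\Lambda^-$ terms inside $G^-$.
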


%%%%%%%%%%%%%%%%%%%%%%%%%%%%%%%%%%%%%%

%\newpage
\subsection{Proof of Proposition \ref{Theorem-HD}}\label{S-proof-1}

The key new ingredient in the proof of Proposition 4.1 is the construction of a coupled iteration map for the free boundary and the two Euler phases.
Unlike the one-phase framework of \cite{bae2019contact3D}, the pressure continuity condition across the contact discontinuity must be enforced throughout the iteration procedure. This requires a simultaneous update of the free boundary and the flow states on both sides of the interface.

Fix $\alpha\in(0,1)$.
%%%%%%
For positive constants $\delta_1$  to be determined later, we define two iteration sets $\mathcal{I}(\delta_1)$  by 
\begingroup
\begin{align}\label{3D-Iteration-set}\allowdisplaybreaks %%%%%%Lambda set 수정필요\frac{1}{2}\mp\frac{1}{4}
\begin{aligned}
&\mathcal{I}(\delta_1):=\mathcal{I}_S(\delta_1)\times\mathcal{I}_{\Lambda}(\delta_1),\\
&\quad\mathcal{I}_{S}(\delta_1):=\left\{S^{-}\in C^{1,\alpha}(\overline{\Omega_{\frac{1}{2}+\frac{1}{4},L}^{-}})\left|\,\begin{aligned}
											&S^{-}\mbox{ is axisymmetric},\\
											%&S^{\pm}=S_{\rm en}^{\pm}\mbox{ on }\Gamma_{0}^{\pm},\\
											&\partial_x S^{-}=0\mbox{ on }\Gamma_{0,\epsilon}^{-}\cup\Gamma_{\frac{1}{2}+\frac{1}{4},L}^{-},\\
											&\|S^{-}-S_0^{-}\|_{1,\alpha,\Omega^{-}_{\frac{1}{2}+\frac{1}{4},L}}\le \delta_1\sigma
											\end{aligned}\right.\right\},\\
&\quad\mathcal{I}_{\Lambda}(\delta_1):=\left\{\frac{\Lambda^{-}}{r}{\bf e}_{\theta}\in C^{1,\alpha}(\overline{\Omega_{\frac{1}{2}+\frac{1}{4},L}^{-}})\left|\,\begin{aligned}
											&\Lambda^{-}\mbox{ is axisymmetric},\\
											%&\Lambda^{\pm}=rw_{\rm en}^{\pm}\mbox{ on }\Gamma_{0}^{\pm},\\
											&\partial_x \Lambda^{-}=0\mbox{ on }\Gamma_{0,\epsilon}^{-}\cup\Gamma_{\frac{1}{2}+\frac{1}{4},L}^{-},\\
											&\left\|\frac{\Lambda^{-}}{r}{\bf e}_{\theta}\right\|_{1,\alpha,\Omega^{-}_{\frac{1}{2}+\frac{1}{4},L}}\le \delta_1\sigma
											\end{aligned}\right.\right\}.					
\end{aligned}
\end{align}
\endgroup
%\begin{remark}
This is a free-boundary problem and the location of the contact discontinuity is unknown before the problem is solved. Therefore we need the fixed larger domains $\Omega_{\frac{1}{2}+\frac{1}{4},L}^{-}$ in \eqref{3D-Iteration-set}.
%\end{remark}

%%%%%%%%%%%%%%%%
For each of $(S_{\ast}^{-},\frac{\Lambda_{\ast}^{-}}{r}{\bf e}_{\theta})\in\mathcal{I}(\delta_1)$ and $(S_{\ast}^+,\frac{\Lambda^+}{r}{\bf e}_{\theta})=(S_0^+,{\bf 0})$, we solve the following free boundary problem for $(f,\varphi^{\pm},\psi^{\pm}{\bf e}_{\theta})$:
\begin{equation}\label{Free-1-prob}
\left\{\begin{split}
&\mbox{\rm div}(\varrho^{\pm}(S_{\ast}^{\pm},{\bf q}_{\ast}^{\pm}){\bf q}_{\ast}^{\pm})=0,\\
&-\Delta(\psi^{\pm}{\bf e}_{\theta})=G^{\pm}(S_{\ast}^{\pm},\Lambda_{\ast}^{\pm},\partial_rS_{\ast}^{\pm},\partial_r\Lambda_{\ast}^{\pm},\nabla\varphi^{\pm},{\bf t}_{\ast}^{\pm}){\bf e}_{\theta},\\
\end{split}\right.\quad\mbox{in}\quad\Omega_{f,L}^{\pm}
\end{equation}
with boundary conditions
\begin{equation}\label{Free-1-bd}
\left\{
\begin{split}
\partial_r\varphi^{\pm}=-v_{\rm en}^{\pm},\quad \partial_x\psi^{\pm}{\bf e}_{\theta}={\bf 0}\quad&\mbox{on}\,\,\Gamma_{0}^{\pm},\\
\partial_r\varphi^+=0,\quad \partial_x\psi^+{\bf e}_{\theta}={\bf 0}\quad&\mbox{on}\,\,\Gamma_{{\rm w},L},\\
\nabla\varphi^{\pm}\cdot{\bf n}_f^{\pm}+(\nabla\times\psi^{\pm}{\bf e}_{\theta})\cdot{\bf n}_f^{\pm}={0}\quad&\mbox{on}\,\,\mathfrak{C}_{f,L},\\
S_{\ast}^+(\varrho^+)^{\gamma}(S_{\ast}^{+},{\bf q}_{\ast}^+)=S_{\ast}^-(\varrho^-)^{\gamma}(S_{\ast}^{-},{\bf q}_{\ast}^-) \quad&\mbox{on}\,\,\mathfrak{C}_{f,L},\\
\varphi^{\pm}=\varphi^{\pm}_0(L,\cdot),\quad \partial_x\psi^{\pm}{\bf e}_{\theta}={\bf 0}\quad&\mbox{on}\,\,\Gamma_{f,L}^{\pm}.
\end{split}
\right.
\end{equation}
In the above, ${\bf q}_{\ast}^{\pm}$ and ${\bf t}_{\ast}^{\pm}$ are given by 
\begin{equation*}
{\bf q}_{\ast}^{\pm}:={\bf q}(\nabla\varphi^{\pm},\nabla\times(\psi^{\pm}{\bf e}_{\theta}),\frac{\Lambda_{\ast}^{\pm}}{r}{\bf e}_{\theta})
\mbox{ and }{\bf t}_{\ast}^{\pm}:=\nabla\times(\psi^{\pm}{\bf e}_{\theta})+\frac{\Lambda^{\pm}_{\ast}}{r}{\bf e}_{\theta}
\end{equation*}
 for simplicity of notation.

%%%%%%%%%%%%%%%%
\begin{lemma}\label{pro-41}
Under the same assumptions as in Theorem \ref{main=Thm-infty},
there exist small constants $\varepsilon_1^{\diamond}>0$ depending only on ($\rho_0^{\pm}$, $u_0^{-}$, $p_0$, $\gamma$, $\alpha$, $\delta_1$) and  $\sigma_1^{\diamond}>0$ depending only on ($\rho_0^{\pm}$, $u_0^{\pm}$, $p_0$, $\gamma$, $\alpha$, $\delta_1$)  but independent of $L$ so that if 
\begin{equation}\label{diamond}
u_0^+\le\varepsilon_1^{\diamond}\mbox{ and }
\sigma\le\sigma_1^{\diamond},
\end{equation}
then, for fixed $(S_{\ast}^{-},\frac{\Lambda_{\ast}^{-}}{r}{\bf e}_{\theta})\in\mathcal{I}(\delta_1)$ and $(S_{\ast}^+,\frac{\Lambda^+}{r}{\bf e}_{\theta})=(S_0^+,{\bf 0})$, the free boundary problem \eqref{Free-1-prob}-\eqref{Free-1-bd} has a unique axisymmetric solution $(f,\varphi^{\pm},\psi^{\pm}{\bf e}_{\theta})$ that satisfies 
\begin{equation}\label{3D-est-prop-free}
\begin{split}
\|f-\frac{1}{2}\|_{2,\alpha,(0,L)}&+\|\varphi^{+}-\varphi_0^{+}\|_{2,\alpha,\Omega_{f,L}^{+}}+\|\psi^{+}{\bf e}_{\theta}\|_{2,\alpha,\Omega_{f,L}^{+}}\\
&+\|\varphi^{-}-\varphi_0^{-}\|_{2,\alpha,\Omega_{f,L}^{-}}+\|\psi^{-}{\bf e}_{\theta}\|_{2,\alpha,\Omega_{f,L}^{-}}
\le C(1+\delta_1)\sigma
\end{split}
\end{equation}
for a positive constant $C$ depending only on the data but independent of $L$. In particular, $\varepsilon_1^{\diamond}$ is independent of both the truncation length $L$ and the value of $u_0^+$.
\end{lemma}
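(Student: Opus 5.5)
The plan is a fixed point argument in the free boundary $f$. The elliptic subproblems on each side are solved for a given interface, and $f$ is then updated from the pressure continuity condition. We fix $(S_\ast^-,\frac{\Lambda_\ast^-}{r}{\bf e}_\theta)\in\mathcal{I}(\delta_1)$ and set $(S_\ast^+,\Lambda_\ast^+)=(S_0^+,0)$. Define the iteration set
\[
\mathcal{J}(\delta_2):=\Bigl\{g\in C^{2,\alpha}([0,L]):\ g(0)=\tfrac12,\ g'(0)=0,\ \|g-\tfrac12\|_{2,\alpha,(0,L)}\le\delta_2\sigma\Bigr\}.
\]
For $g\in\mathcal{J}(\delta_2)$, solve on the fixed domains $\Omega_{g,L}^\pm$ the nonlinear elliptic problems \eqref{Free-1-prob}. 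The boundary conditions are those of \eqref{Free-1-bd}, except that the pressure condition is dropped and the slip condition on $\mathfrak{C}_{g,L}$ is imposed as an oblique boundary condition for $\varphi^\pm$, coupled with $\psi^\pm=0$ there. This is legitimate because, in axisymmetry, $(\nabla\times\psi{\bf e}_\theta)\cdot{\bf n}$ is a tangential derivative of $r\psi$. Following the one-phase analysis of \cite{bae2019contact3D}, we linearize around $(\varphi_0^\pm,0)$ and use Schauder estimates for the mixed problems in the cylinder segments. Corners are handled by the reflection method, which is possible because of the compatibility conditions \eqref{ven-con}--\eqref{ven-con-2} and the conditions $\partial_xS^-=\partial_x\Lambda^-=0$ built into $\mathcal{I}_S,\mathcal{I}_\Lambda$. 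Axis regularity for $\psi^-{\bf e}_\theta$ is obtained by regarding it as a 3D vector field. A contraction argument then yields unique $(\varphi^\pm,\psi^\pm)$ with $\|\varphi^\pm-\varphi_0^\pm\|_{2,\alpha}+\|\psi^\pm{\bf e}_\theta\|_{2,\alpha}\le C(1+\delta_1+\delta_2\sigma)\sigma$.

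The constants must be independent of $L$. We would get this from weighted or uniform estimates that combine a maximum-principle or energy bound on unit-length pieces with the Dirichlet exit condition $\varphi^\pm=\varphi_0^\pm(L,\cdot)$. This gives a Poincaré-type inequality in $x$ whose constants depend only on the cross section.

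The update of $g$ comes from pressure continuity. Since the background pressures agree, linearizing $S_\ast^+(\varrho^+)^\gamma=S_\ast^-(\varrho^-)^\gamma$ gives
\[
\rho_0^+u_0^+\,\partial_x(\varphi^+-\varphi_0^+)-\rho_0^-u_0^-\,\partial_x(\varphi^--\varphi_0^-)=\mathcal{N}
\]
on $\mathfrak{C}_{g,L}$, with $\mathcal{N}$ quadratic or small. The natural device is to impose this relation, rather than the slip condition, as the boundary condition for $\varphi^-$ on the interface; the $\varphi^+$ trace then enters as data. This is the decoupling step where smallness of $u_0^+$ is used: the coefficient $\rho_0^+u_0^+$ makes the coupling from the $+$ phase a perturbation of size $\varepsilon_1^\diamond$, independent of $\sigma$. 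We therefore solve the $-$ problem with a condition that, after integration in $x$, is of Dirichlet type, solve the $+$ problem with the slip condition, and close the two-phase coupling by a contraction in which the cross term has factor $u_0^+$. The new interface $\tilde f$ is then defined by the slip relation on the $-$ side,
\[
\tilde f'(x)=\frac{(\partial_r\varphi^- +\partial_x\psi^-+\psi^-/r)}{(\partial_x\varphi^- -\partial_r\psi^-)}(x,g(x)),\qquad \tilde f(0)=\tfrac12 .
\]
This gives $\|\tilde f-\frac12\|_{2,\alpha}\le C(1+\delta_1)\sigma$ from the $C^{2,\alpha}$ bounds on $\varphi^-,\psi^-$. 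To keep the estimate uniform in $L$ without integrating $f'$ over a long interval, we would express $r\psi$-type stream quantities through the mass flux. The interface is then a level set of the stream function, and $f-\frac12$ is controlled pointwise by the flux, not by $\int f'$.

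We choose $\delta_2=2C(1+\delta_1)$ and then $\sigma$ small, so that $g\mapsto\tilde f$ maps $\mathcal{J}(\delta_2)$ into itself. Contraction is proved in the weaker norm $C^{1,\alpha/2}$ by comparing solutions on different domains, after pulling them back to $\Omega_{\frac12,L}^\pm$ by the map $r\mapsto r\,g(x)/\tfrac12$ near the interface. Compactness of $\mathcal{J}$ in the weaker norm gives a fixed point with the stated $C^{2,\alpha}$ bound, and contraction gives uniqueness. The main obstacle is the coupling step: showing that the linearized transmission problem (oblique condition on one side, pressure condition on the other) is uniquely solvable with $L$-independent Schauder constants, where the smallness of $u_0^+$ must absorb the cross terms. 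There we would first try to prove an $L$-uniform $H^1$ estimate via energy identities on both sides, then bootstrap to $C^{2,\alpha}$ locally on unit-length boxes.
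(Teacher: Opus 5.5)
In its final form (after the switch in your third paragraph), your scheme has the same architecture as the paper's:
\begin{itemize}
\item for a fixed interface, the $+$ phase is solved with the slip condition, so $\psi^+\equiv0$ and $\varphi^+$ does not see the $-$ phase;
\item the trace of $\nabla\varphi^+$ enters the pressure condition for the $-$ phase;
\item the interface is updated through the mass flux of the $-$ phase, as in \eqref{S-curve};
\item smallness of $u_0^+$ controls the feedback of $f_\ast'$ through the Neumann datum $u_0^+f_\ast'/\sqrt{1+|f_\ast'|^2}$ of $\varphi^+$.
\end{itemize}
This coupling runs in one direction only, so the two-sided transmission problem you single out as the main obstacle does not arise.

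\textbf{The gap: pressure continuity on the $-$ side.} You keep $\psi^-=0$ on $\mathfrak{C}_{g,L}$ and turn the linearized relation into Dirichlet data for $\varphi^-$ by integrating in $x$. The remainder $\mathcal{N}$ contains $O(\sigma)$ terms that do not decay along the interface:
\begin{itemize}
\item the trace of $S_\ast^--S_0^-$ (at the fixed point $S^-\equiv S_{\rm en}^-(\frac12)$ on the interface, which need not equal $S_0^-$);
\item the contribution $\frac1r\partial_r(r\psi^-)$ of the vector potential to the tangential speed, driven by $G_\ast^-$;
\item the $(\Lambda_\ast^-/g)^2$ part of $|{\bf q}^-|^2$.
\end{itemize}
The resulting Dirichlet datum therefore contains $-\frac{1}{\rho_0^-u_0^-}\int_0^x\mathcal{N}$, which grows like $x\sigma$. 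That rules out an $L$-independent bound on $\|\varphi^--\varphi_0^-\|_{2,\alpha}$.

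There is a worse problem at the corners. Because $\phi^\pm$ vanish at $(0,\frac12)$ and at $(L,g(L))$ by the entrance and exit data, the single integration constant is already fixed at the entrance. At the exit it cannot reconcile the interface datum with $\varphi^-=\varphi_0^-(L,\cdot)$ on $\Gamma_{g,L}^-$ unless $\int_0^L\mathcal{N}\,dx=0$. Generically the boundary data jump at $(L,g(L))$, and no $C^{2,\alpha}$ (indeed no continuous) solution exists. Within your decomposition the integration cannot be avoided: a purely tangential derivative condition is not an admissible elliptic boundary condition for $\varphi^-$.

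\textbf{The missing idea} is the paper's decomposition \eqref{dec-bd}. It uses the non-uniqueness of the Helmholtz decomposition to let the vector potential, not $\varphi^-$, carry the pressure-determined tangential speed.
\begin{itemize}
\item Impose $\varphi^-=\varphi_0^-$ exactly on the interface; this is compatible with the data at both corners.
\item Anticipate that the slip ${\bf q}^-\cdot{\bf n}^-_{f_\ast}=0$ will be recovered at the fixed point of $\mathcal{J}_{\rm cd}$.
\item Use Bernoulli to rewrite $p^+=p^-$ as $|{\bf q}^-|^2=\mathfrak{Y}_\ast$, i.e. $(\nabla\times\psi^-{\bf e}_\theta)\cdot{\bm\tau}_{f_\ast}=\sqrt{\mathfrak{Y}_\ast-(\Lambda_\ast/f_\ast)^2}-\nabla\varphi_0^-\cdot{\bm\tau}_{f_\ast}$.
\end{itemize}
This is the Robin condition $\nabla{\bf W}^-\cdot{\bf n}_{f_\ast}^--\mu_{f_\ast}{\bf W}^-=\mathcal{A}_\ast{\bf e}_\theta$ with $\mu_{f_\ast}<0$. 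It is local in $x$ and has the favorable sign for the barrier $\mathfrak{M}^-$. It gives $\|{\bf W}^-\|_{2,\alpha}\le C(\|G_\ast^-{\bf e}_\theta\|_{0,\alpha}+\|\mathcal{A}_\ast{\bf e}_\theta\|_{1,\alpha})$ uniformly in $L$. The non-decaying $O(\sigma)$ change in tangential speed is thus absorbed by a bounded $\psi^-$ instead of a linearly growing $\varphi^-$.

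Two smaller corrections:
\begin{itemize}
\item Since $\nabla\times(\psi{\bf e}_\theta)=-\partial_x\psi\,{\bf e}_r+\frac1r\partial_r(r\psi)\,{\bf e}_x$, the slope is $\tilde f'=(\partial_r\varphi^--\partial_x\psi^-)/(\partial_x\varphi^-+\partial_r\psi^-+\psi^-/r)$; your formula has the components swapped.
\item Your iteration set needs $g'(L)=0$ as well. Otherwise the Neumann datum of $\varphi^+$ does not vanish at the exit corner, and the reflection argument there fails.
\end{itemize}
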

%%%%%%%%%%%%%%%%

\begin{proof}[Proof of Lemma \ref{pro-41}]
For a positive constant $\delta_2$ to be determined later, define an iteration set $\mathcal{I}(\delta_2)$ by 
\begin{equation*}
\mathcal{I}(\delta_2):=\left\{f\in C^{2,\alpha}([0,L])\left|\, \begin{split}
										&f(0)=\frac{1}{2}, f'(0)=f'(L)=0,\\
										&\|f-\frac{1}{2}\|_{2,\alpha,(0,L)}\le \delta_2\sigma\end{split}\right.\right\}.
\end{equation*}
Fix $f_{\ast}\in\mathcal{I}(\delta_2)$. If it holds that $\sigma\le \frac{1}{ 4 \delta_2}$, then $|f_{\ast}(x_1)-\frac{1}{2}|\le\frac{1}{4}$ for $x\in[0,L]$.
For such a function $f_{\ast}$, we solve the following fixed boundary problem for $(\varphi^{\pm},\psi^{\pm}{\bf e}_{\theta})$ in $\Omega_{f_{\ast},L}^{\pm}$:
\begin{equation}\label{Fix-prob}
\left\{\begin{split}
&\mbox{\rm div}(\varrho^{\pm}(S_{\ast}^{\pm},{\bf q}_{\ast}^{\pm}){\bf q}_{\ast}^{\pm})=0,\\
&-\Delta(\psi^{\pm}{\bf e}_{\theta})=G^{\pm}(S_{\ast}^{\pm},\Lambda_{\ast}^{\pm},\partial_rS_{\ast}^{\pm},\partial_r\Lambda_{\ast}^{\pm},\nabla\varphi^{\pm},{\bf t}_{\ast}^{\pm}){\bf e}_{\theta},\\
%&\varrho^{\pm}(S^{\pm},{\bf q}^{\pm}){\bf q}^{\pm}\cdot\nabla S^{\pm}=0,\\
%&\varrho^{\pm}(S^{\pm},{\bf q}^{\pm}){\bf q}^{\pm}\cdot\nabla \Lambda^{\pm}=0,
\end{split}\right.\,\,\mbox{in}\,\,\Omega_{f_{\ast},L}^{\pm}
\end{equation}
with the boundary conditions
\begin{equation}\label{Fix-bd}
\left\{
\begin{split}
\varphi^{+}=-\int_{\frac{1}{2}}^rv_{\rm en}^{+}(t)dt,\quad \partial_x\psi^{+}{\bf e}_{\theta}={\bf 0}%,\quad S^{+}=S_{\rm en}^{+},\quad\Lambda^{+}=rw_{\rm en}^{+}
\quad&\mbox{on}\,\,\Gamma_{0}^{+},\\
\partial_r\varphi^+=0,\quad \partial_x\psi^+{\bf e}_{\theta}={\bf 0}\quad&\mbox{on}\,\,\Gamma_{{\rm w},L},\\
\nabla\varphi^{+}\cdot{\bf n}_{f_{\ast}}^{+}=0,\quad (\nabla\times\psi^{+}{\bf e}_{\theta})\cdot{\bf n}_{f_{\ast}}^{+}={0}\quad&\mbox{on}\,\,\partial\Omega_{f_{\ast},L}^+\cap\mathfrak{C}_{f_{\ast},L},\\
\varphi^{+}=\varphi^{+}_0(L,\cdot),\quad \partial_x\psi^{+}{\bf e}_{\theta}={\bf 0}\quad&\mbox{on}\,\,\Gamma_{f_{\ast},L}^{+}
\end{split}
\right.
\end{equation}
and 
\begin{equation}\label{Fix-bd-lower}
\left\{
\begin{split}
\varphi^{-}=-\int_{\frac{1}{2}}^r v_{\rm en}^{-}(t)dt,\quad \partial_x\psi^{-}{\bf e}_{\theta}={\bf 0}%,\quad S^{-}=S_{\rm en}^{-},\quad\Lambda^{-}=rw_{\rm en}^{-}
\quad&\mbox{on}\,\,\Gamma_{0}^{-},\\
S_{\ast}^-(\varrho^-)^{\gamma}(S_{\ast}^{-},{\bf q}_{\ast}^-)=S_{\ast}^+(\varrho^+)^{\gamma}(S_{\ast}^{+},{\bf q}_{\ast}^+) \quad&\mbox{on}\,\,\partial\Omega_{f_{\ast},L}^-\cap\mathfrak{C}_{f_{\ast},L},\\
\varphi^{-}=\varphi^{-}_0(L,\cdot),\quad \partial_x\psi^{-}{\bf e}_{\theta}={\bf 0}\quad&\mbox{on}\,\,\Gamma_{f_{\ast},L}^{-}.
\end{split}
\right.
\end{equation}
%%%%%%%%%%%%%%%%%%%%%%%%%%%%%%%%%
\begin{remark}
%%%%%%%%%
We need to decompose the boundary condition 
\begin{equation}\label{bd-pp}
S_{\ast}^-(\varrho^-)^{\gamma}(S_{\ast}^{-},{\bf q}_{\ast}^-)=S_{\ast}^+(\varrho^+)^{\gamma}(S_{\ast}^{+},{\bf q}_{\ast}^+) \quad\mbox{on}\,\,\partial\Omega_{f_{\ast},L}^-\cap\mathfrak{C}_{f_{\ast},L}
\end{equation}
into boundary conditions for $\varphi^-$ and $\psi^-{\bf e}_{\theta}$.
To simplify notation, let us set 
\begin{equation*}
\mathfrak{p}(\zeta,{\bf q}):=\zeta\varrho^{\gamma}(\zeta,{\bf q})
\end{equation*}
for $\zeta\in\mathbb{R}$ and ${\bf q}\in\mathbb{R}^3$.
Then \eqref{bd-pp} is equivalent to 
\begin{equation*}
\mathfrak{p}(S_{\ast}^-,{\bf q}_{\ast}^-)=\mathfrak{p}(S_{\ast}^+,{\bf q}_{\ast}^+).
\end{equation*}
By the definition of the Bernoulli invariant and \eqref{bd-pp}, we have
\begin{equation}\label{bd-ber}
%\begin{split}
|{\bf q}_{\ast}^-|^2
=2\left(B_0^--\frac{\gamma}{\gamma-1}(\mathfrak{p}(S_{\ast}^+,{\bf q}_{\ast}^+))^{1-\frac{1}{\gamma}}(S_{\ast}^-)^{\frac{1}{\gamma}}\right)
=: \mathfrak{Y}
%\end{split}
\end{equation}
If it  holds that  ${\bf q}_{\ast}^-\cdot{\bf n}^-_{f_\ast}=0$ on $\partial\Omega_{f_{\ast},L}^-\cap\mathfrak{C}_{f_{\ast},L}$, % for the outward unit normal vector field ${\bf n}^+_{f_{\ast}}:=\frac{(f_{\ast}',-1)}{\sqrt{1+|f_{\ast}'|^2}}$ on $\partial\Omega_{f_{\ast}}^+\cap\mathfrak{C}_{f_{\ast}}$, 
then \eqref{bd-ber} can be decomposed into conditions for $\varphi^-$ and $\psi^-{\bf e}_{\theta}$ as follows:
\begin{equation}\label{dec-bd}
\begin{split}
&\varphi^-=\varphi_0^-,\\
&(\nabla\times \psi^-{\bf e}_{\theta})\cdot{\bm\tau}_{f_{\ast}}=\sqrt{\mathfrak{Y}-\left(\frac{\Lambda_{\ast}}{f_{\ast}}\right)^2}-\nabla\varphi_0^-\cdot{\bm\tau}_{f_{\ast}}
%&\Lambda=rw_{\rm en}^+(0)
\mbox{ on }\partial\Omega_{f_{\ast},L}^-\cap\mathfrak{C}_{f_{\ast},L}
\end{split}
\end{equation}
for a tangential vector field ${\bm\tau}_{f_{\ast}}$ defined by 
\begin{equation*}
{\bm\tau}_{f_{\ast}}=\frac{{\bf e}_x+f'_{\ast}(x){\bf e}_r}{\sqrt{1+|f_{\ast}'|^2}}.
\end{equation*}
We will find ${\bf q}_{\ast}^-$ satisfying \eqref{dec-bd} and ${\bf q}_{\ast}^-\cdot{\bf n}_{f_\ast}^-=0$ on $\partial\Omega_{f_{\ast},L}^-\cap\mathfrak{C}_{f_{\ast},L}$.

In the one-phase framework of \cite{bae2019contact3D}, the pressure continuity condition is partially absorbed by the prescribed background state.
In the present two-phase setting, the pressure continuity condition must be enforced between two unknown Euler phases.
The decomposition \eqref{dec-bd} converts the pressure continuity condition, together with the slip condition on the interface, into boundary conditions for the Helmholtz variables, which makes it possible to incorporate the Rankine--Hugoniot condition directly into the coupled iteration scheme.

\end{remark}
%%%%%%%%%%%%%%%%%%%%%%%%%%%%%%%%%

\begin{lemma}\label{Lemma-fixed}
Under the same assumptions as in Theorem \ref{main=Thm-infty},
there exists a small constant $\sigma_2>0$ depending only on the data and $(\delta_1,\delta_2)$ but independent of $L$ so that if 
\begin{equation}\label{sigma2}
\sigma\le\sigma_2,
\end{equation}
then, for each $f_{\ast}\in\mathcal{I}(\delta_2)$, the fixed boundary problem \eqref{Fix-prob}-\eqref{Fix-bd-lower} has a unique axisymmetric solution $(\varphi^{\pm},\psi^{\pm}{\bf e}_{\theta})\in \left[C^{2,\alpha}(\overline{\Omega_{f_\ast,L}^{\pm}})\right]^4$ that satisfies 
\begin{equation}\label{3D-est-prop}
\begin{split}
&\|\varphi^{+}-\varphi_0^{+}\|_{2,\alpha,\Omega_{f_{\ast},L}^{+}}+\|\psi^{+}{\bf e}_{\theta}\|_{2,\alpha,\Omega_{f_{\ast},L}^{+}}\le C(1+u_0^+\delta_2)\sigma,\\
&\|\varphi^{-}-\varphi_0^{-}\|_{2,\alpha,\Omega_{f_{\ast},L}^{-}}+\|\psi^{-}{\bf e}_{\theta}\|_{2,\alpha,\Omega_{f_{\ast},L}^{-}}\le C(1+\delta_1+u_0^+\delta_2)\sigma%%
\end{split}
\end{equation}
for a positive constant $C$ depending only on the data but independent of $L$. 
\end{lemma}

The proof of Lemma \ref{Lemma-fixed} is given in the next section. 
%%
%%%
Assume that Lemma \ref{Lemma-fixed} is true and 
define an iteration mapping $\mathcal{J}_{\rm cd}:\mathcal{I}(\delta_2)\to C^{2,\alpha}([0,L])$ by 
\begin{equation}\label{J-cd}
\mathcal{J}_{\rm cd}:f_{\ast}\mapsto f,
\end{equation}
where $f:[0,L]\to\mathbb{R}$ is a function satisfying
\begin{equation}\label{S-curve}
\begin{split}
\int_{f(x)}^{f_{\ast}(x)}t\rho_0^-u_{0}^- dt
=&\int^{\frac{1}{2}}_0t\varrho^{-}(S_{\ast}^{-},{\bf q}^{-})({\bf q}^-\cdot{\bf e}_x)(0,t)dt
-\int^{f_{\ast}(x)}_0t\varrho^{-}(S_{\ast}^{-},{\bf q}^-)({\bf q}^-\cdot{\bf e}_x)(x,t)dt
\end{split}
\end{equation}
for ${\bf q}^-={\bf q}(\nabla\varphi^{-},\nabla\times(\psi^{-}{\bf e}_{\theta}),\frac{\Lambda_{\ast}^{-}}{r}{\bf e}_{\theta})$  obtained in Lemma \ref{Lemma-fixed} associated with $f_{\ast}$.
Then, obviously, $f(0)=\frac{1}{2}$.
Since $\rho_0^-u_0^->0$, 
\eqref{S-curve} is equivalent to 
\begin{equation*}
f(x)=\sqrt{f_{\ast}^2(x)-\frac{2}{\rho_0^-u_0^-}\mbox{(RHS of \eqref{S-curve})}}.
\end{equation*}
Since $\|f_{\ast}-\frac{1}{2}\|_{2,\alpha,(0,L)}\le \delta_2\sigma$, 
there exists a small $\sigma^{\natural}\in(0,\sigma_2]$ depending only on the data and $(\delta_1,\delta_2)$ but independent of $L$ so that if $$\sigma\le\sigma^{\natural},$$ then $f$ is well-defined. 
Also,  we have 
\begin{equation*}
f'(0)=f'(L)=0
\end{equation*}
and
\begin{equation}\label{ff-est}
\|f-\frac{1}{2}\|_{2,\alpha,(0,L)}\le C_{\star}(1+\delta_1+u_0^+\delta_2)\sigma
\end{equation}
for a positive constant $C_{\star}$ depending only on the data but independent of $L$.
If $f= f_{\ast}$, then we differentiate \eqref{S-curve} with respect to $x$ and use the continuity equation to get 
\begin{equation*}
f'(x)=\frac{{\bf q}^-\cdot{\bf e}_r}{{\bf q}^-\cdot{\bf e}_x}(x,f(x)).
\end{equation*}
This means that ${\bf q}^-\cdot{\bf n}_f^-=0$ on $\partial\Omega_{f,L}^{-}\cap\mathfrak{C}_{f,L}$.
%%%%%%%%%%%%%%5
Thus, if we choose $\delta_2$, $\varepsilon_1^{\diamond}$, and $\sigma_1^{\diamond}$  satisfying
\begin{equation}\label{def-delta2}
\begin{split}
&\delta_2:=2C_{\star}(1+\delta_1),\,\, \varepsilon_1^{\diamond}\le \frac{1}{2C_{\star}},\mbox{ and }\sigma_1^{\diamond}\le \min\left\{\sigma^{\natural},\sigma_2, \frac{1}{4\delta_2}\right\}=:\sigma^{\sharp},
\end{split}
\end{equation}
then $f\in \mathcal{I}(\delta_2)$.
%%%%%%%%%%%%%%%%%%

The iteration set $\mathcal{I}(\delta_2)$ is a convex and compact subset of $C^{2,\alpha/2}([0,L])$.
%To prove the continuity of the mapping $\mathcal{J}_{\rm cd}$, 
Assume that $\{f_k^{\ast}\}_{k=1}^{\infty}\subset\mathcal{I}(\delta_2)$ converges in $C^{2,\alpha/2}([0,L])$ to $f_{\infty}^{\ast}\in\mathcal{I}(\delta_2)$. 
For each $k\in\mathbb{N}\cup\{\infty\}$, let us set 
\begin{equation*}
f_k:=\mathcal{J}_{\rm cd}(f_k^{\ast})
\end{equation*}
and let $\mathcal{U}_k^{\pm}:=(\varphi_k^{\pm},\psi_k^{\pm}{\bf e}_{\theta})$ be the unique solution to the fixed boundary problem \eqref{Fix-prob}-\eqref{Fix-bd-lower} associated with $f_{\ast}=f_k^{\ast}$.
For the transformations $T_k^{\pm}:\overline{\Omega^{\pm}_{f_k^{\ast},L}}\to\overline{\Omega^{\pm}_{f_\infty^{\ast},L}}$  defined by 
\begin{equation*}
\begin{split}
&T_k^+:(x,r,\theta)\mapsto\left(x,\frac{1-f_\infty^{\ast}(x)}{1-f_k^{\ast}(x)}(r-1)+1,\theta\right),\\
&T_k^-:(x,r,\theta)\mapsto \left(x,\frac{f_{\infty}^{\ast}(x)}{f_k^{\ast}(x)}r,\theta\right),
\end{split}
\end{equation*}
$\left\{\mathcal{U}_k^{\pm}\circ (T_k^{\pm})^{-1}\right\}_{k=1}^{\infty}$ is sequentially compact in  $\left[C^{2,\alpha/2}(\overline{\Omega_{f_\infty^\ast,L}^{\pm}})\right]^4$ and the limit of each convergent subsequence of $\left\{\mathcal{U}_k^{\pm}\circ (T_k^{\pm})^{-1}\right\}_{k=1}^{\infty}$ in $\left[C^{2,\alpha/2}(\overline{\Omega_{f_\infty^\ast,L}^{\pm}})\right]^4$ solves the fixed boundary problem \eqref{Fix-prob}-\eqref{Fix-bd-lower} associated with $f_{\ast}=f_\infty^{\ast}$. 
The uniqueness of a solution to the problem \eqref{Fix-prob}-\eqref{Fix-bd-lower} implies that $\left\{\mathcal{U}_k^{\pm}\circ (T_k^{\pm})^{-1}\right\}_{k=1}^{\infty}$ is convergent in $\left[C^{2,\alpha/2}(\overline{\Omega_{f_\infty^\ast,L}^{\pm}})\right]^4$, which provides that $f_k$ converges to $f_{\infty}$ in $C^{2,\alpha/2}([0,L])$. Thus the mapping $\mathcal{J}_{\rm cd}$ is continuous in $C^{2,\alpha/2}([0,L])$.
%%%%
Then, according to the Schauder fixed point theorem, $\mathcal{J}_{\rm cd}$ has a fixed point $f_{\natural}\in \mathcal{I}(\delta_2)$.
For such a fixed point $f_{\natural}:[0,L]\to(\frac{1}{4},\frac{3}{4})$, we have a unique solution $(\varphi^{\pm}_{\natural},\psi^{\pm}_{\natural}{\bf e}_{\theta})$ to the fixed boundary problem \eqref{Fix-prob}-\eqref{Fix-bd}  associated with $f_{\ast}=f_{\natural}$ by Lemma \ref{Lemma-fixed}.
Then $(f_{\natural},\varphi^{\pm}_{\natural},\psi^{\pm}_{\natural}{\bf e}_{\theta})$ is the solution to the free boundary problem \eqref{Free-1-prob}-\eqref{Free-1-bd}.
From \eqref{3D-est-prop}, \eqref{ff-est}, and \eqref{def-delta2}, we can obtain the estimate \eqref{3D-est-prop-free}.
%%%
 
 To prove uniqueness, let $(f_k,\varphi^{\pm}_k,\psi^{\pm}_k{\bf e}_{\theta})$ $(k=1,2$) be two solutions to the free boundary problem \eqref{Free-1-prob}-\eqref{Free-1-bd} and suppose that they satisfy the estimate  \eqref{3D-est-prop-free}.
For transformations $\mathfrak{T}^{\pm}_d:\overline{\Omega_{f_1,L}^{\pm}}\to\overline{\Omega_{f_2,L}^{\pm}}$ defined by 
\begin{equation}\label{domain-trans}
\begin{split}
&\mathfrak{T}_d^{+}:(x,r,\theta)\mapsto\left(x,\frac{1-f_2(x)}{1-f_1(x)}(r-1)+1,\theta\right),\\
&\mathfrak{T}_d^{-}:(x,r,\theta)\mapsto\left(x,\frac{f_2(x)}{f_1(x)}r,\theta\right),
\end{split}
\end{equation}
we subtract the elliptic systems and boundary conditions of $(\varphi^{\pm}_1,\psi^{\pm}_1)$ and $(\varphi_2^{\pm}\circ(\mathfrak{T}_d^{\pm})^{-1},\psi_2^{\pm}\circ(\mathfrak{T}_d^{\pm})^{-1})$ in the fixed domains $\Omega_{f_1,L}^{\pm}$ and obtain the estimates of 
\begin{equation*}
d\varphi^{\pm}:=\varphi^{\pm}_1-\left(\varphi_2^{\pm}\circ(\mathfrak{T}_d^{\pm})^{-1}\right)\mbox{ and }d\psi^{\pm}:=\psi^{\pm}_1-\left(\psi_2^{\pm}\circ(\mathfrak{T}_d^{\pm})^{-1}\right).
\end{equation*}
Similarly to \eqref{3D-est-prop}, one can see that there exists a small $\sigma_d\in(0,\sigma^{\sharp}]$ depending only on the data and $\delta^{\pm}_1$  but independent of $L$ so that if $\sigma\le\sigma_d$ then we have 
\begin{equation}\label{d-plus-est}
\|d\varphi^+\|_{2,\alpha,\Omega_{f_1,L}^+}+\|d\psi^+\|_{2,\alpha,\Omega_{f_1,L}^+}
\le C_1\left(\sigma+u_0^+\right)\|f_1-f_2\|_{2,\alpha,(0,L)}
\end{equation}
and
\begin{equation}\label{d-minus-est}
\begin{split}
\|d\varphi^-\|_{2,\alpha,\Omega_{f_1,L}^-}+\|d\psi^-\|_{2,\alpha,\Omega_{f_1,L}^-}
\le &\,C_2\left((\delta_1+1)\sigma+u_0^+\right)\|f_1-f_2\|_{2,\alpha,(0,L)}\\
&+C_3\left(\|d\varphi^+\|_{2,\alpha,\Omega_{f_1,L}^+}+\|d\psi^+\|_{2,\alpha,\Omega_{f_1,L}^+}\right)
\end{split}
\end{equation}
for constants $C_i$ ($i=1,2,3$) depending only on the data but independent of $L$.
By using \eqref{S-curve}, we compute the estimate of $f_1-f_2$. 
There exists a small $\sigma_e\in(0,\sigma_d]$ depending only on the data and $\delta^{\pm}_1$ but independent of $L$ so that if $\sigma\le\sigma_e$, then 
\begin{equation}\label{f-d-est}
\begin{split}
\|f_1-f_2\|_{2,\alpha,(0,L)}
\le &\, C\left(\|d\varphi^-\|_{2,\alpha,\Omega_{f_1,L}^-}+\|d\psi^-\|_{2,\alpha,\Omega_{f_1,L}^-}\right).
\end{split}
\end{equation}
for a constant $C$ depending only on the data but independent of $L$.
Combining \eqref{d-plus-est}-\eqref{f-d-est} gives 
\begin{equation}\label{f12-unique}
\|f_1-f_2\|_{2,\alpha,(0,L)}\le C_{\dagger}\left((\delta_1+1)\sigma+u_0^+\right)\|f_1-f_2\|_{2,\alpha,(0,L)}
\end{equation}
for a constant $C_{\dagger}$ depending only on the data but independent of $L$.
Since all estimate constants depend on $u_0^+$ only through nonnegative powers, after fixing the remaining parameters, we can choose
$\varepsilon_1^{\diamond}>0$, independent of $u_0^+$, sufficiently small so that if $u_0^+\le\varepsilon_1^{\diamond}$, then 
\begin{equation*}
u_0^+\le \min\left\{\frac{1}{2C_{\star}},\frac{1}{2C_{\dagger}}\right\}.
\end{equation*}
We choose $\sigma_1^{\diamond}$ as 
\begin{equation*}
%\varepsilon_1^{\diamond}\le \min\left\{\frac{1}{2C_{\star}},\frac{1}{2C_{\dagger}}\right\}
%\mbox{ and }
\sigma_1^{\diamond}=\min\left\{\sigma_e,\frac{1}{2C_{\dagger}(\delta_1+1)}\right\}.
\end{equation*}
Then \eqref{f12-unique} implies that $f_1=f_2$. 
Substituting this into \eqref{d-plus-est} and \eqref{d-minus-est}, we obtain \(d\varphi^\pm=d\psi^\pm=0\). Hence the solution is unique.
The proof of Lemma \ref{pro-41} is completed.
\end{proof}

%%%%%%%%%%%%%%%%%%%%%%%%%%%%%%%%%
%%%%%%%%%%%%%%%%%%%%%%%%%%%%%%%%%
%%%%%%%%%%%%%%%%%%%%%%%%%%%%%%%%%
%%%%%%%%%%%%%%%%%%%%%%%%%%%%%%%%%%%%%%%%%%%%%%
%%%%%%%%%%%%%%%%%%%%%%%%%%%%%%%%%%%%%%%%%%%%%%

For the solution $(f_{\natural},\varphi^{\pm}_{\natural},\psi^{\pm}_{\natural}{\bf e}_{\theta})$ obtained in Lemma \ref{pro-41}, let us set 
$${\bf q}_{\natural}^{\pm}:={\bf q}(\nabla\varphi_{\natural}^{\pm},\nabla\times(\psi^{\pm}_{\natural}{\bf e}_{\theta}),\frac{\Lambda_{\ast}^{\pm}}{r}{\bf e}_{\theta})$$
and  solve the following initial value problem for $(S^{\pm},\Lambda^{\pm})$ in $\Omega_{f_{\natural},L}^{\pm}$:
\begin{equation}\label{lem-ini}
\left\{\begin{split}
\varrho^{\pm}(S_{\ast}^{\pm},{\bf q}_{\natural}^{\pm}){\bf q}_{\natural}^{\pm}\cdot\nabla S^{\pm}=0\quad&\mbox{in}\,\,\Omega_{f_{\natural},L}^{\pm},\\
S^{\pm}=S_{\rm en}^{\pm}\quad&\mbox{on}\,\,\Gamma_{0}^{\pm},\\
\varrho^{\pm}(S_{\ast}^{\pm},{\bf q}_{\natural}^{\pm}){\bf q}_{\natural}^{\pm}\cdot\nabla \Lambda^{\pm}=0\quad&\mbox{in}\,\,\Omega_{f_{\natural},L}^{\pm},\\
\Lambda^{\pm}=rw_{\rm en}^{\pm}\quad&\mbox{on}\,\,\Gamma_{0}^{\pm}.
\end{split}\right.%\quad\mbox{in}\quad\Omega_{f,L}^{\pm}
\end{equation}
%%%%%%%
It follows from \eqref{Fix-prob} that 
\begin{equation}\label{trans-1}
\left\{\begin{split}
\mbox{div}\left(\varrho^{\pm}(S_{\ast}^{\pm},{\bf q}_{\natural}^{\pm}){\bf q}_{\natural}^{\pm}\right)=0&\mbox{ in }\Omega_{f_{\natural},L}^{\pm},\\
{\bf q}^{\pm}_{\natural}\cdot{\bf e}_r=0&\mbox{ on }\Gamma_{{\rm w},L}^{\pm},\\
{\bf q}^{\pm}_{\natural}\cdot{\bf n}_{f_{\natural}}^{\pm}=0&\mbox{ on }\partial\Omega_{f_{\natural},L}^{\pm}\cap\mathfrak{C}_{f_{\natural},L}.
\end{split}\right.
\end{equation}
Also, one can choose a small $\sigma_{\flat}\in(0,\sigma_1^{\diamond}]$ depending only on the data and $\delta_1$  but independent of $L$ so that if $\sigma<\sigma_{\flat}$, then 
there exists a constant $\mathfrak{x}>0$ depending only on the data  such that 
\begin{equation}\label{trans-2}
\rho^{\pm}(S_{\ast}^{\pm},{\bf q}_{\natural}^{\pm}){\bf q}_{\natural}^{\pm}\cdot{\bf e}_x>\mathfrak{x}>0\mbox{ in }\Omega_{f_{\natural},L}^{\pm}.
\end{equation}
Once the coupled free-boundary problem \eqref{Free-1-prob}-\eqref{Free-1-bd} has been solved and we obtain \eqref{trans-1}-\eqref{trans-2}, the problem \eqref{lem-ini} can be treated by the characteristic method developed in \cite{bae2019contact3D} which yields the following lemma.
%Then we can obtain the following lemma as in \cite[Lemma 4.4]{bae2019contact3D}. 
%We skip the proof of it.
 %For completeness, we only state the main estimates needed for the present two-phase setting.
%%%%%%%%%%%%%%%%%%%%%%%%%%%%%%%%%%%%%%%%%%%%%%%%%%%%%%%
%%%%%%%%%%%%%%%%%%%%%%%%%
%%%%%%%%%%%%%%%%%%%%%%
\begin{lemma}\label{Lem-trans} 
There exist small constants $\varepsilon_3\in(0,\varepsilon_1^{\diamond}]$  depending only on ($\rho_0^{\pm}$, $u_0^{-}$, $p_0$, $\gamma$, $\alpha$)  and  $\sigma_3\in(0,\sigma_{\flat}]$ depending only on ($\rho_0^{\pm}$, $u_0^{\pm}$, $p_0$, $\gamma$, $\alpha$)    but independent of $L$ so that if 
\begin{equation*}
u_0^+\le \varepsilon_3\mbox{ and }\sigma\le\sigma_3,
\end{equation*}
then, in each region of $\Omega_{f_{\natural},L}^{\pm}$,
the initial value problem \eqref{lem-ini} has a unique axisymmetric solution $(S^{\pm},\Lambda^{\pm})$
%\in C^{1,\alpha}(\overline{\Omega_{f_{\sharp},L}^{\pm}})\times C^{1,\alpha}(\overline{\Omega_{f_{\sharp},L}^{\pm}})$ 
that satisfies 
\begin{equation}\label{S-est-natural}
\|S^{\pm}-S_0^{\pm}\|_{1,\alpha,\Omega_{f_{\natural},L}^{\pm}}+\|\frac{\Lambda^{\pm}}{r}{\bf e}_{\theta}\|_{1,\alpha,\Omega_{f_{\natural},L}^{\pm}}\le C_{\natural}^{\pm}\sigma
\end{equation}
for constants $C_{\natural}^{\pm}>0$ depending only on the data but independent of $L$.
In particular, $\varepsilon_3$ is independent of both the truncation length $L$ and the value of $u_0^+$.
Moreover, $S^{\pm}$ and $\Lambda^{\pm}$ have the forms of 
\begin{equation}\label{S-form}
\begin{split}
&S^{\pm}(x,r):=S_{\rm en}^{\pm}\circ\mathcal{T}^{\pm}(x,r),\\
&\Lambda^{\pm}(x,r):=r{w_{\rm en}^{\pm}}\circ\mathcal{T}^{\pm}(x,r)
\end{split}
\end{equation}
for functions $\mathcal{T}^{\pm}:\overline{\Omega^{\pm}_{f_{\natural},L}}\to[0,1]$ satisfying
\begin{equation*}
\begin{split}
&\int_{\mathcal{T}^{+}(x,r)}^{1}t\varrho^{+}(S_{\ast}^{+},{\bf q}_{\natural}^+)({\bf q}_{\natural}^+\cdot{\bf e}_x)(0,t)dt
=\int_{r}^{1}t\varrho^{+}(S_{\ast}^{+},{\bf q}_{\natural}^+)({\bf q}_{\natural}^+\cdot{\bf e}_x)(x,t)dt,\\
&\int^{\mathcal{T}^{-}(x,r)}_0t\varrho^{-}(S_{\ast}^{-},{\bf q}_{\natural}^-)({\bf q}_{\natural}^-\cdot{\bf e}_x)(0,t)dt
=\int^{r}_{0}t\varrho^{-}(S_{\ast}^{-},{\bf q}_{\natural}^-)({\bf q}_{\natural}^-\cdot{\bf e}_x)(x,t)dt.
\end{split}
\end{equation*}
\end{lemma}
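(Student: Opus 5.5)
The approach is the stream-function/characteristic method of \cite{bae2019contact3D}, applied separately in each phase $\Omega^{\pm}_{f_\natural,L}$; the two phases decouple here because $f_\natural$ and ${\bf q}^{\pm}_\natural$ are already fixed.

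\textbf{Step 1 (stream functions).} Set $m^{\pm}:=\varrho^{\pm}(S_\ast^{\pm},{\bf q}^{\pm}_\natural){\bf q}^{\pm}_\natural$. By \eqref{trans-1}, the axisymmetric field $r m^\pm$ is divergence free in the $(x,r)$-plane. The boundary conditions say that $m^\pm$ is tangent to the wall, to the interface $r=f_\natural(x)$, and to the axis. Hence the functions
\[
\Phi^+(x,r)=\int_r^1 t\,m^+\cdot{\bf e}_x(x,t)\,dt,\qquad \Phi^-(x,r)=\int_0^r t\,m^-\cdot{\bf e}_x(x,t)\,dt
\]
are constant along integral curves of $m^\pm$, i.e.\ ${\bf q}^\pm_\natural\cdot\nabla\Phi^\pm=0$. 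To check this, differentiate in $x$ and use the continuity equation together with the slip conditions at $r=1$ and at $r=0$.

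By \eqref{trans-2}, $\partial_r\Phi^\pm=\mp r\,m^\pm\cdot{\bf e}_x$ has a fixed sign away from the axis. So $\Phi^\pm(0,\cdot)$ is strictly monotone in $r$, and $\mathcal{T}^\pm(x,r)$ is well defined as the unique radius with $\Phi^\pm(0,\mathcal{T}^\pm)=\Phi^\pm(x,r)$; this is exactly the defining identity stated in Lemma \ref{Lem-trans}. Mass conservation between $x=0$ and $x$ shows that the flux over each full cross section is independent of $x$, so $\mathcal{T}^\pm$ maps into the correct entrance range $\Gamma_0^\pm$. Defining $S^\pm$ and $\Lambda^\pm$ by \eqref{S-form} then solves \eqref{lem-ini}, since both are functions of $\Phi^\pm$ and take the prescribed values at $x=0$, where $\mathcal{T}^\pm(0,r)=r$. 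Uniqueness follows because any solution is constant along characteristics, and every characteristic reaches $\Gamma_0^\pm$ thanks to the positive axial velocity.

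\textbf{Step 2 (estimates).} For the $+$ phase, \eqref{po-con} gives $S^+\equiv S_0^+$ and $\Lambda^+\equiv0$, so \eqref{S-est-natural} is trivial there. For the $-$ phase, write
\[
S^- - S_0^- = (S_{\rm en}^- - S_0^-)\circ\mathcal{T}^-
\]
and use the chain rule. The $C^{1,\alpha}$ norm of $\mathcal{T}^-$ is bounded uniformly in $L$, because $m^-\in C^{1,\alpha}$ is close to $\rho_0^-u_0^-{\bf e}_x$ by Lemma \ref{pro-41} and the bound on $S_\ast^-$. This step uses $u_0^+\le\varepsilon_3$ only through the constants of Lemma \ref{pro-41}, so $\varepsilon_3$ can be taken independent of $u_0^+$.

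\textbf{Main obstacle: the axis $r=0$ and $\Lambda^-/r$.} We need $C^{1,\alpha}$ control of $\frac{\Lambda^-}{r}{\bf e}_\theta=\frac{\mathcal{T}^-}{r}\,(w^-_{\rm en}{\bf e}_\theta)\circ\mathcal{T}^-$ as a Cartesian vector field in three dimensions. Near the axis, $\Phi^-\approx \tfrac12 r^2 m^-\cdot{\bf e}_x(x,0)$, so $\mathcal{T}^-(x,r)=r\,g(x,r^2)$ for a smooth positive $g$. I would prove this by writing $\Phi^-(x,r)=r^2\int_0^1 s\,m^-_x(x,rs)\,ds$; the integrand is an even, $C^{1,\alpha}$ function of the Cartesian variables, since $m^-\in C^{1,\alpha}$ is axisymmetric. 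The map $(x_2,x_3)\mapsto \mathcal{T}^-(x,r)(\cos\theta,\sin\theta)$ is then a $C^{1,\alpha}$ near-identity diffeomorphism of the cross-section disk. Composing $w_{\rm en}^-{\bf e}_\theta$, which lies in $C^{2,\alpha}$ as a Cartesian field, with this map loses one derivative and yields the $C^{1,\alpha}$ bound, with constants independent of $L$ since all estimates are local in $x$.

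The compatibility conditions \eqref{ven-con-2}, together with $\partial_xS_\ast^-=\partial_x\Lambda_\ast^-=0$ near the corners, handle the corner where $\Gamma_0^-$ meets the interface, giving the bound $C_\natural^-\sigma$.
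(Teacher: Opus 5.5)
Your proposal is correct and takes the route the paper indicates. The paper gives no separate proof; it invokes the characteristic (stream-function) method of \cite{bae2019contact3D}, which is your flux-matching definition of $\mathcal{T}^{\pm}$, with the $+$ phase trivial by \eqref{po-con} and $C^{1,\alpha}$ bounds coming from composition estimates, and your Cartesian treatment of the axis fills in the detail for $\frac{\Lambda^-}{r}{\bf e}_{\theta}$. Two small remarks: with only $C^{1,\alpha}$ data, what holds is that $\mathcal{T}^-/r$ is a $C^{1,\alpha}$ function of the Cartesian variables (not a smooth $g(x,r^2)$), which is exactly what your diffeomorphism argument uses; and your closing paragraph on corner compatibility is not needed here, since $\Phi^-\in C^{2,\alpha}$ up to the boundary is already supplied by Lemma \ref{pro-41}.
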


\begin{remark}
The condition \eqref{po-con} implies that $S^+=S_0^+$ and $\Lambda^+=0$. Moreover, the equation $\psi^+{\bf e}_{\theta}$ becomes homogeneous, and the boundary conditions imposed on $\psi^+{\bf e}_{\theta}$, together with the uniqueness of the corresponding elliptic problem, yield $\psi^+{\bf e}_{\theta}={\bf 0}$. 
Hence ${\bf u}^+=\nabla\varphi^+$, so the plus phase is a potential flow.
%Although $S^+$ and $\Lambda^+$ remain fixed, the potential phase is still unknown through $\varphi^+$, and the pressure matching condition couples $\varphi^+$, $(S^-,\Lambda^-,\varphi^-,\psi^-)$, and $f$.
\end{remark}
%%%%%%%%%%%%%%%%%%%%%%%
%%%%%%%%%%%%%%%%%%%%%%%%%%%%%%%%%%%%%%%%%%%%%%%%%%%%%%

For the solution $(S^{-},\Lambda^{-})$ obtained in Lemma \ref{Lem-trans},
let us define extensions of $S^{-}$ and $\Lambda^{-}$ as follows:
\begin{equation*}
\begin{split}
&S_{\rm ext}^-(x,r):=\left\{\begin{split}
					\tilde{S}_{\rm e}^-\circ T_{\rm e}^-(x,r)\mbox{ for }r>f_{\natural}(x),\\
					S^-(x,r)\mbox{ for }r<f_{\natural}(x),\\
					\end{split}\right.\quad
\Lambda_{\rm ext}^-(x,r):=\left\{\begin{split}
					\tilde{\Lambda}_{\rm e}^-\circ T_{\rm e}^-(x,r)\mbox{ for }r>f_{\natural}(x),\\
					\Lambda^-(x,r)\mbox{ for }r<f_{\natural}(x),\\
					\end{split}\right.\\
\end{split}
\end{equation*}
for $T_{\rm e}^{-}$, $\tilde{S}_{\rm e}^{-}$, and $\tilde{\Lambda}_{\rm e}^{-}$ defined by 
\begin{equation*}
\begin{split}
&T_{\rm e}^-:(x,r)\mapsto\left(x,-\frac{r}{2f_{\natural}(x)}+\frac{1}{2}\right),\\
&\tilde{S}_{\rm e}^{-}(y,s):=\sum_{i=1}^3c_i(S^{-}\circ (T_{\rm e}^{-})^{-1})\left(y,-\frac{s}{i}\right),\, s<0,\\
&\tilde{\Lambda}_{\rm e}^{-}(y,s):=\sum_{i=1}^3c_i(\Lambda^{-}\circ (T_{\rm e}^{-})^{-1})\left(y,-\frac{s}{i}\right),\, s<0,\\
\end{split}
\end{equation*}
with $c_i$ ($i=1,2,3$) satisfying $\sum_{i=1}^3c_i(-\frac{1}{i})^m=1$, $m=0,1,2$.
%%%%%%
Define iteration mappings $\mathcal{J}_t^{-}:\mathcal{I}(\delta_1)\to \left[C^{1,\alpha}(\overline{\Omega_{\frac{1}{2}+\frac{1}{4},L}^{-}})\right]^4$ by 
\begin{equation*}
\mathcal{J}_t^{-}:(S_{\ast}^{-},\frac{\Lambda_{\ast}^{-}}{r}{\bf e}_{\theta})\mapsto (S_{\rm ext}^{-},\frac{\Lambda_{\rm ext}^{-}}{r}{\bf e}_{\theta}).
\end{equation*}
%%%%%%%%
%%
We choose $\delta_1$  as 
\begin{equation*}
\delta_1:=C_{\natural}^{-}
\end{equation*}
for $C_{\natural}^{-}$ in \eqref{S-est-natural}
so that $(S_{\rm ext}^{-},\frac{\Lambda_{\rm ext}^{-}}{r}{\bf e}_{\theta})\in \mathcal{I}(\delta_1)$.
%%%%
%%
%%%%%%
Similarly to the mapping $\mathcal{J}_{\rm cd}$ in \eqref{J-cd}, we can use the Schauder fixed point theorem to prove that there exist  fixed points $(S_{\sharp}^{-},\frac{\Lambda_{\sharp}^{-}}{r}{\bf e}_{\theta})$ of $\mathcal{J}_t^{-}$.
For the fixed points $(S_{\sharp}^{-},\frac{\Lambda^{-}_{\sharp}}{r}{\bf e}_{\theta})$, let $(f_{\star},\varphi_{\star}^{\pm},\psi_{\star}^{\pm}{\bf e}_{\theta})$ be the solution to the free boundary problems \eqref{Free-1-prob}-\eqref{Free-1-bd} associated with $(S_{\ast}^{-},\frac{\Lambda_{\ast}^{-}}{r}{\bf e}_{\theta})=(S_{\sharp}^{-},\frac{\Lambda^{-}_{\sharp}}{r}{\bf e}_{\theta})$.
Then, for $(S_{\sharp}^{+},\frac{\Lambda^{+}_{\sharp}}{r}{\bf e}_{\theta})=(S_0^+,{\bf 0})$,  $(f_{\star}, S_{\sharp}^{\pm},\frac{\Lambda^{\pm}_{\sharp}}{r}{\bf e}_{\theta},\varphi^{\pm}_{\star},\psi_{\star}^{\pm}{\bf e}_{\theta})$ is the solution to Problem \ref{H-free} and satisfies the estimate \eqref{3D-est-Thm-HD-finite}.

To prove the uniqueness, let $(f_k, S_{k}^{\pm},\frac{\Lambda^{\pm}_{k}}{r}{\bf e}_{\theta},\varphi^{\pm}_{k},\psi_{k}^{\pm}{\bf e}_{\theta})$ be two solutions to Problem \ref{H-free} and suppose that they satisfy the estimate \eqref{3D-est-Thm-HD-finite}.
Similarly to the proof of Lemma \ref{pro-41}, we can obtain the estimate of 
\begin{equation*}
\varphi^{\pm}_1-\left(\varphi_2^{\pm}\circ(\mathfrak{T}_d^{\pm})^{-1}\right),\,\,\psi^{\pm}_1-\left(\psi_2^{\pm}\circ(\mathfrak{T}_d^{\pm})^{-1}\right),\mbox{ and }f_1-f_2
\end{equation*}
for $\mathfrak{T}_d^{\pm}$ given in \eqref{domain-trans}.
To obtain the estimates of 
\begin{equation*}
dS^{\pm}:=S^{\pm}_1-\left(S_2^{\pm}\circ(\mathfrak{T}_d^{\pm})^{-1}\right)\mbox{ and }
d\Lambda^{\pm}:=\frac{\Lambda_1^{\pm}}{r} {\bf e}_{\theta}-\left(\frac{\Lambda_2^{\pm}}{r} {\bf e}_{\theta}\circ(\mathfrak{T}_d^{\pm})^{-1}\right),
\end{equation*}
we use \eqref{S-form}.
Since the entrance conditions of $(S_1^{\pm},\Lambda_1^{\pm})$ and $(S_2^{\pm},\Lambda_2^{\pm})$ are same, we have 
\begin{equation}
\|dS^{\pm}\|_{1,\alpha,\Omega_{f_1,L}^{\pm}}+\|d\Lambda^{\pm}\|_{1,\alpha,\Omega_{f_1,L}^{\pm}}
\le C\sigma \mathfrak{D}^{\pm}
\end{equation}
for $\mathfrak{D}^{\pm}$ given by 
\begin{equation}
\begin{split}
\mathfrak{D}^{\pm}
:=&\,
\|\varphi^{\pm}_1-\left(\varphi_2^{\pm}\circ(\mathfrak{T}_d^{\pm})^{-1}\right)\|_{2,\alpha,\Omega_{f_1,L}^{\pm}}
+\|\psi^{\pm}_1-\left(\psi_2^{\pm}\circ(\mathfrak{T}_d^{\pm})^{-1}\right)\|_{2,\alpha,\Omega_{f_1,L}^{\pm}}\\
&+\|f_1-f_2\|_{2,\alpha,(0,L)}.
\end{split}
\end{equation}
Then, similarly to the proof of Lemma \ref{pro-41}, we can see that there exist small constants $\varepsilon_1^{\star}\in(0,\varepsilon_3]$ depending only on ($\rho_0^{\pm}$, $u_0^{-}$, $p_0$, $\gamma$, $\alpha$)  and  $\sigma_1^{\star}\in(0,\sigma_3]$ depending only on depending only on ($\rho_0^{\pm}$, $u_0^{\pm}$, $p_0$, $\gamma$, $\alpha$) but independent of $L$ so that if $u_0^+\le \varepsilon_1^{\star}$ and $\sigma\le\sigma_1^{\star}$, then the solution is unique. 
%%%%%%%%%%%%%%%%%%%%%%%%%%%%%%%%%%%%%%%%%%%%%%
%%%%%%%%%%%%%%%%%%%%%%%%%%%%%%%%%%%%%%%%%%%%%%
%%
This finishes the proof of Proposition  \ref{Theorem-HD}.\qed

%%%%%%%%%%%%%%%%%%%%%%%%%%%%
%%%%%%%%%%%%%%%%%%%%%%%%%%%%%%%%%%%%%%
%%%%%%%%%%%%%%%%%%%%%%%%%%%%%%%%%%

%\newpage
\subsection{Proof of Lemma \ref{Lemma-fixed}}\label{S-proof-2}
For $\zeta\in\mathbb{R}$, ${\bf a}=(a_1,a_2,a_3)$, ${\bf b}=(b_1,b_2,b_3)\in\mathbb{R}^3$, 
define ${\bf A}^{\pm}=(A_1^{\pm},A_2^{\pm},A_3^{\pm})$ by 
$$A_j^{\pm}(\zeta,{\bf a},{\bf b}):=\varrho^{\pm}(\zeta,{\bf a}+{\bf b})a_j\mbox{ for }j=1,2,3.$$
to rewrite the first equation in \eqref{Fix-prob} as 
\begin{equation}\label{3D-lin-first}
\mbox{div}({\bf A}^{\pm}(S^{\pm},\nabla\varphi^{\pm},{\bf t}^{\pm}))=-\mbox{div}(\varrho^{\pm}(S^{\pm},\nabla\varphi^{\pm}+{\bf t}^{\pm}){\bf t}^{\pm}).
\end{equation}
If we set $\alpha_{ij}^{\pm}$ as  
\begin{equation}\label{def-aij}
\alpha_{ij}^{\pm}:=\partial_{a_j}A_i^{\pm}(S_0^{\pm},\nabla\varphi_0^{\pm},{\bf 0})\mbox{ for }i,j=1,2,3,
\end{equation}
then the matrix $[\alpha_{ij}^{\pm}]_{i,j=1}^3$ is strictly positive and diagonal, and there exists a constant $\nu\in(0,\frac{1}{10}]$ satisfying
\begin{equation}\label{alpha-positive}
\nu\mathbb{I}_3\le[\alpha_{ij}^{\pm}]_{i,j=1}^3\le\frac{1}{\nu}\mathbb{I}_3.
\end{equation}
%%%%%

Set $\phi^{\pm}:=\varphi^{\pm}-\varphi_0^{\pm}$. Then the equation \eqref{3D-lin-first} can be rewritten as 
\begin{equation*}
\mathfrak{L}(\phi^{\pm})=\mbox{div}{\mathfrak F}^{\pm}(S^{\pm}-S_0^{\pm},\nabla\phi^{\pm},{\bf t}^{\pm})
\end{equation*}
for $\mathfrak{L}$ and $\mathfrak{F}^{\pm}=(F_1^{\pm},F_2^{\pm},F_3^{\pm})$ defined by 
\begin{equation}\label{def-FF}
\begin{split}
&\mathfrak{L}(\phi^{\pm}):=\sum_{i=1}^3\alpha_{ii}^{\pm}\partial_{ii}\phi^{\pm},\\
&\begin{split}
	F_i^{\pm}(Q):=&-\int_0^1D_{(\zeta,{\bf b})}A_i^{\pm}({\bf V}_0^{\pm}+tQ)dt\cdot(\zeta,{\bf b})\\
		&-\int_0^1D_{\bf a}A_i^{\pm}({\bf V}_0^{\pm}+tQ)-D_{\bf a}A_i^{\pm}({\bf V}_0^{\pm})dt\cdot{\bf a}-\varrho^{\pm}(\mathfrak{e}({\bf V}_0^{\pm}+Q))b_i
	\end{split}
\end{split}
\end{equation}
with $Q=(\zeta,{\bf a},{\bf b})\in\mathbb{R}\times(\mathbb{R}^3)^2$, ${\bf V}_0^{\pm}=(S_0^{\pm},\nabla\varphi_0^{\pm},{\bf 0})$, $\mathfrak{e}({\bf V}_0^{\pm}+Q)=(S_0^{\pm}+\zeta,\nabla\varphi_0^{\pm}+{\bf a}+{\bf b})$.
From the boundary conditions \eqref{Fix-prob}-\eqref{Fix-bd} for $\varphi^{\pm}$, we prescribe the boundary conditions for $\phi^{\pm}$ as follows:
\begin{equation*}
\begin{split}
&\phi^{\pm}=-\int_{\frac{1}{2}}^r v_{\rm en}^{\pm}(t)dt\mbox{ on }\Gamma_0^{\pm},\quad \partial_r\phi^{+}=0\mbox{ on }\Gamma_{\rm w},\quad\phi^{\pm}=0\mbox{ on }\Gamma_{f_{\ast},L}^{\pm},\\
&\nabla\phi^{+}\cdot{\bf n}^{+}_{f_{\ast}}=-\nabla\varphi_0^{+}\cdot{\bf n}^{+}_{f_{\ast}}=\frac{u_0^{+}f_{\ast}'}{\sqrt{1+|f_{\ast}'|^2}}\mbox{ on }\partial\Omega_{f_{\ast},L}^{+}\cap\mathfrak{C}_{f_{\ast},L},\\
&\phi^-=0\mbox{ on }\partial\Omega_{f_{\ast},L}^{-}\cap\mathfrak{C}_{f_{\ast},L}.
\end{split}
\end{equation*}
%where ${\bf n}_{f_{\ast}}^{\pm}$ is the outward unit normal vector field on $\partial\Omega_{f_{\ast}}^{\pm}\cap\mathfrak{C}_{f_{\ast}}$, i.e.,
%\begin{equation}
%{\bf n}_{f_{\ast}}^{\pm}:=\frac{\pm f_{\ast}'(x_1){\bf e}_x+\mp {\bf e}_r}{\sqrt{1+|f_{\ast}'(x_1)|^2}}.
%\end{equation}

%%%%%%%%%%%%%%%%%%%%%%%%%%%%%%%%%%%%%%%%%%%%%
%%%%%%%%%%%%%%%%%%%%%%%%%%%%%%%%%%%%%%%%%%%%%
%%%%%%%%%%%%%%%%%%%%%%%%%%%%%%%%%%%%%%%%%%%
%%%%%%%%%%%%%%%%%%%%%%%%%%%%%%%%%%%%%%%%%%%%%
%For each $(S_{\ast}^{\pm},\frac{\Lambda_{\ast}^{\pm}}{r}{\bf e}_{\theta},f_{\ast})\in\mathcal{I}(\delta_1)\times\mathcal{I}(\delta_2)$, let us set ${\bf t}_{\ast}^{\pm}:=\nabla\times(\psi^{\pm}{\bf e}_{\theta})+\frac{\Lambda^{\pm}_{\ast}}{r}{\bf e}_{\theta}$ for simplicity of notations.
Now, we solve the boundary value problem for $(\phi^{\pm},{\bf W}^{\pm}:=\psi^{\pm}{\bf e}_{\theta})$:
\begingroup
\begin{align}\label{3D-psi-phi}\allowdisplaybreaks
\begin{aligned}
	&\left\{\begin{aligned}
	\mathfrak{L}(\phi^{\pm})=\mbox{div}{\mathfrak F}^{\pm}(S_{\ast}^{\pm}-S_0^{\pm},\nabla\phi^{\pm},{\bf t}_{\ast}^{\pm})\mbox{ in }\Omega_{f_{\ast},L}^{\pm},\\
	\phi^{\pm}=-\int_{\frac{1}{2}}^r v_{\rm en}^{\pm}(t)dt\mbox{ on }\Gamma_0^{\pm},\quad \partial_r\phi^{+}=0\mbox{ on }\Gamma_{{\rm w},L},\quad\phi^{\pm}=0\mbox{ on }\Gamma_{f_{\ast},L}^{\pm},\\
	\phi^-=0\mbox{ on }\partial\Omega_{f_{\ast},L}^{-}\cap\mathfrak{C}_{f_{\ast},L},\\
	\nabla\phi^{+}\cdot{\bf n}^{+}_{f_{\ast}}=-\nabla\varphi_0^{+}\cdot{\bf n}_{f_{\ast}}^{+}=\frac{u_0^{+}f_{\ast}'}{\sqrt{1+|f_{\ast}'|^2}}\mbox{ on }\partial\Omega_{f_{\ast},L}^{+}\cap\mathfrak{C}_{f_{\ast},L},
	\end{aligned}\right.\\
	&\left\{\begin{aligned}
	-\Delta{\bf W}^{\pm}=G^{\pm}(S_{\ast}^{\pm},\Lambda_{\ast}^{\pm},\partial_rS_{\ast}^{\pm},\partial_r\Lambda_{\ast}^{\pm},\nabla(\phi^{\pm}+\varphi_0^{\pm}),{\bf t}_{\ast}^{\pm}){\bf e}_{\theta}\mbox{ in }\Omega_{f_{\ast},L}^{\pm},\\
	\partial_x{\bf W}^{\pm}={\bf 0}\mbox{ on }\Gamma_0^{\pm},\quad\partial_x{\bf W}^{+}={\bf 0}\mbox{ on }\Gamma_{{\rm w},L},\quad
	\partial_x{\bf W}^{\pm}={\bf 0}\mbox{ on }\Gamma_{f_{\ast},L}^{\pm},\\
	(\nabla\times{\bf W}^-)\cdot{\bm\tau}_{f_{\ast}}=\sqrt{\mathfrak{Y}_{\ast}-\left(\frac{\Lambda_{\ast}}{f_{\ast}}\right)^2}-\nabla\varphi_0^-\cdot{\bm\tau}_{f_{\ast}}\mbox{ on }\partial\Omega_{f_{\ast},L}^{-}\cap\mathfrak{C}_{f_{\ast},L},\\
	(\nabla\times{\bf W}^{+})\cdot{\bf n}_{f_{\ast}}^{+}={\bf 0}\mbox{ on }\partial\Omega_{f_{\ast},L}^{+}\cap\mathfrak{C}_{f_{\ast},L}
	\end{aligned}\right.
\end{aligned}
\end{align}
\endgroup
for $\mathfrak{Y}_{\ast}$ given by
\begin{equation*}
\mathfrak{Y}_{\ast}:=2\left(B_0^--\frac{\gamma}{\gamma-1}(\mathfrak{p}(S_{\ast}^+,{\bf q}_{\ast}^+))^{1-\frac{1}{\gamma}}(S_{\ast}^-)^{\frac{1}{\gamma}}\right).
\end{equation*}
Note that the condition of ${\bf W}^-$ on $\partial\Omega_{f_{\ast},L}^-\cap\mathfrak{C}_{f_{\ast},L}$ is equivalent to 
\begin{equation*}
\nabla{\bf W}^-\cdot{\bf n}_{f_\ast}^--\mu_{f_{\ast}}{\bf W}^-=\mathcal{A}_{\ast}{\bf e}_{\theta}\mbox{ on }\partial\Omega_{f_{\ast},L}^-\cap\mathfrak{C}_{f_{\ast},L}
\end{equation*}
for $\mu_{f_{\ast}}$ and $\mathcal{A}_{\ast}$ defined by 
\begin{equation*}
\mu_{f_{\ast}}:=-\frac{1}{f_{\ast}(x)\sqrt{1+|f'_{\ast}(x)|^2}}
\end{equation*}
and
\begin{equation}\label{bd-A-def}
\mathcal{A}_{\ast}:=-\sqrt{\mathfrak{Y}_{\ast}-\left(\frac{\Lambda_{\ast}}{f_{\ast}}\right)^2}+\nabla\varphi_0^-\cdot{\bm\tau}_{f_{\ast}}.
\end{equation}
Also, the condition of ${\bf W}^+$ on $\partial\Omega_{f_{\ast},L}^+\cap\mathfrak{C}_{f_{\ast},L}$ is equivalent to 
\begin{equation}\label{bc-psi+}
\nabla\psi^+\cdot{\bm\tau}_{f_{\ast}}+\frac{f'
_{\ast}}{f_{\ast}\sqrt{1+|f_{\ast}'|^2}}\psi^+=0\mbox{ on }\partial\Omega_{f_{\ast},L}^+\cap\mathfrak{C}_{f_{\ast},L}.
\end{equation}
Thus, if $\psi^+\equiv 0$ on $\partial\Omega_{f_{\ast},L}^+\cap\mathfrak{C}_{f_{\ast},L}$, then the condition \eqref{bc-psi+} holds.
So we will find ${\bf W}^+$ satisfying 
\begin{equation*}
{\bf W}^+\equiv {\bf 0}\mbox{ on }\partial\Omega_{f_{\ast},L}^+\cap\mathfrak{C}_{f_{\ast},L}.
\end{equation*}

%%%%%%%%%%%%%%%%%%
\begin{lemma}\label{lemma-pphi}
There exists a small constant $\sigma_4>0$ depending only on the data and $(\delta_1,\delta_2)$ but independent of $L$ so that if 
$$\sigma\le\sigma_4,$$
then the boundary value problem \eqref{3D-psi-phi} has a unique axisymmetric solution $(\phi^{\pm},{\bf W}^{\pm})\in [C^{2,\alpha}(\overline{\Omega^{\pm}_{{f_{\ast}},L}})]^4$ that satisfies 
\begin{equation}\label{pphi-est}
\begin{split}
&\|\phi^{+}\|_{2,\alpha,\Omega^{+}_{{f_{\ast}},L}}+\|{\bf W}^+\|_{2,\alpha,\Omega^{+}_{{f_{\ast}},L}}\le C(1+\delta_1^++u_0^+\delta_2)\sigma,\\
&\|\phi^{-}\|_{2,\alpha,\Omega^{-}_{{f_{\ast}},L}}+\|{\bf W}^-\|_{2,\alpha,\Omega^{-}_{{f_{\ast}},L}}\le C(1+\delta_1^-+\delta_1^++u_0^+\delta_2)\sigma
\end{split}
\end{equation}
for a constant $C>0$ depending only on the data but independent of $L$.
Moreover, $\phi^{\pm}$ satisfy
$$\partial_x^2\phi^{\pm}=0\quad\mbox{on }\Gamma_{0,\epsilon}^{\pm}\cup\Gamma_{f_{\ast},L}^{\pm},$$
and ${\bf W}^{\pm}$ have the forms of ${\bf W}^{\pm}=\psi^{\pm}{\bf e}_{\theta}$ for axisymmetric functions $\psi^{\pm}=\psi^{\pm}(x,r)$ satisfying 
\begin{equation}\label{psi-pm-eq}
\left\{\begin{split}
-\left(\partial_{xx}+\frac{1}{r}\partial_r(r\partial_r)-\frac{1}{r^2}\right)\psi^{\pm}=G^{\pm}(S_{\ast}^{\pm},\Lambda_{\ast}^{\pm},\partial_rS_{\ast}^{\pm},\partial_r\Lambda_{\ast}^{\pm},\nabla(\phi^{\pm}+\varphi_0^{\pm}),{\bf t}_{\ast}^{\pm})\mbox{ in }\Omega_{f_{\ast},L}^{\pm},\\
\partial_x\psi^{\pm}=0\mbox{ on }\Gamma_0^{\pm},\quad\partial_x\psi^{+}=0\mbox{ on }\Gamma_{{\rm w},L},\quad
	\partial_x\psi^{\pm}=0\mbox{ on }\Gamma_{f_{\ast},L}^{\pm},\\
%\pm\frac{f_{\ast}'(x)}{r}\partial_r(r\psi^{\pm})\pm\partial_x\psi^{\pm}=0&
-\frac{1}{r}\nabla(r\psi^-)\cdot{\bf n}_{f_{\ast}}^-=\sqrt{\mathfrak{Y}_{\ast}-\left(\frac{\Lambda_{\ast}}{f_{\ast}}\right)^2}-\nabla\varphi_0^-\cdot{\bm\tau}_{f_{\ast}}\mbox{ on }\partial\Omega_{f_{\ast},L}^{-}\cap\mathfrak{C}_{f_\ast,L},\\
\psi^{+}=0\mbox{ on }\partial\Omega_{f_{\ast},L}^{+}\cap\mathfrak{C}_{f_\ast,L},\\
\psi^{-}=\partial_{rr}\psi^-=0\mbox{ on }\Omega_{f_{\ast},L}^-\cap\{r=0\}.
\end{split}\right.
\end{equation}
\end{lemma}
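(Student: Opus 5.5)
The plan is a contraction (or Schauder) argument for the nonlinear boundary value problem \eqref{3D-psi-phi}. The key is to linearize so that the coupling between the two phases is one-directional at the linear level. The $+$ phase only sees $f_\ast$ through the Neumann datum $u_0^+f_\ast'/\sqrt{1+|f_\ast'|^2}$ and through the domain. The $-$ phase sees the $+$ phase only through the boundary datum $\mathcal{A}_\ast$, via $\mathfrak{Y}_\ast$. So I would solve the $+$ problem first, then feed its trace into the $-$ problem.

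Concretely, I fix an iteration set of pairs $(\tilde\phi^\pm,\tilde{\bf W}^\pm)$ with $\|\tilde\phi^\pm\|_{2,\alpha}+\|\tilde{\bf W}^\pm\|_{2,\alpha}\le M\sigma$. I freeze the nonlinear terms $\mathfrak{F}^\pm$, $G^\pm$, ${\bf t}_\ast^\pm$ and $\mathcal{A}_\ast$ at these pairs, and solve the resulting linear problems.

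\begin{enumerate}
\item \textbf{Potential parts.} For $\phi^\pm$ the linear problems are $\mathfrak{L}\phi^\pm=\mathrm{div}\,\mathfrak{F}^\pm$ with constant diagonal coefficients satisfying \eqref{alpha-positive}.
\begin{itemize}
\item $\phi^+$ has mixed Dirichlet/Neumann conditions.
\item $\phi^-$ has Dirichlet data on $\mathfrak{C}_{f_\ast,L}$, the entrance and the exit.
\end{itemize}
In both cases I get $H^1$ existence from Lax--Milgram.

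\item \textbf{Stream functions.} For ${\bf W}^\pm=\psi^\pm{\bf e}_\theta$ I work with the vector Laplacian on the 3D domain, which automatically handles the axis $r=0$. The axisymmetric ansatz gives the scalar system \eqref{psi-pm-eq}, and the axis regularity $\psi^-=\partial_{rr}\psi^-=0$ comes from smoothness of $\psi^-{\bf e}_\theta$ as a 3D vector field.
\begin{itemize}
\item On $\mathfrak{C}_{f_\ast,L}$, ${\bf W}^-$ satisfies the Robin condition with $\mu_{f_\ast}<0$. This sign gives coercivity together with the Neumann conditions $\partial_x{\bf W}^-=0$ at the entrance and exit.
\item ${\bf W}^+$ satisfies Dirichlet $0$ on $\mathfrak{C}_{f_\ast,L}$ and Neumann conditions elsewhere.
\end{itemize}

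\item \textbf{Removing corner singularities.} The corners at $x=0$ and $x=L$ are handled by even reflection across $\Gamma_0$ and $\Gamma_{f_\ast,L}$, where Neumann-type conditions on $\partial_x$ hold. The compatibility conditions \eqref{ven-con}--\eqref{ven-con-2} make the entrance Dirichlet data for $\phi^\pm$ constant near the corners, so one can subtract it and reflect oddly.

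For the reflections to keep the freezing compatible at the corners, the iteration set requires $\partial_xS^-_\ast=\partial_x\Lambda^-_\ast=0$ there, and $f_\ast'(0)=f_\ast'(L)=0$. The reflections give $\partial_x^2\phi^\pm=0$ on $\Gamma^\pm_{0,\epsilon}\cup\Gamma^\pm_{f_\ast,L}$.

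\item \textbf{Schauder estimates.} After the reflections, local $C^{2,\alpha}$ Schauder estimates apply up to the boundary. I take them on unit cylinders $\{|x-x_0|<2\}$ covering $[0,L]$, so the constants are independent of $L$.
\end{enumerate}

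The hardest step is the $L$-independent global $C^0$ (or $H^1$) bound feeding the local Schauder estimates. I would first try a weighted energy estimate on each slab and use Poincaré inequalities in $r$. The cross-section supplies Dirichlet conditions for $\phi^-$ and for ${\bf W}^+$ on part of the boundary, and the Robin sign gives control for ${\bf W}^-$.

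The $\phi^+$ problem is Neumann on both lateral boundaries, so the cross-sectional Poincaré fails for it. Instead I use the Dirichlet conditions at $x=0$ and $x=L$ together with a barrier or maximum-principle argument in $x$ for the constant-coefficient operator $\mathfrak{L}$. Combined with the divergence-form right-hand side and interior $L^\infty$ (De Giorgi/Moser) bounds on unit slabs, I expect this to yield $\sup|\phi^+|\le C(\|\mathfrak{F}^+\|_{\alpha}+u_0^+\|f_\ast'\|_{1,\alpha}+\sigma)$ uniformly in $L$. If this fails, I would use the exit normalization.

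With these estimates in hand:
\begin{itemize}
\item The $+$ estimate gives $C(1+u_0^+\delta_2)\sigma$, plus the contribution of $\delta_1^+$ from the frozen $+$ data.
\item Since $\mathfrak{p}(S_0^+,{\bf q}^+_\ast)-p_0$ is controlled by $\|\phi^+\|_{1,\alpha}$, we get $\|\mathcal{A}_\ast+\text{background}\|_{1,\alpha}\le C(\|\phi^+\|_{2,\alpha}+\delta_1^-\sigma)$. This yields the $-$ estimate in \eqref{pphi-est}.
\item The nonlinear remainders $\mathfrak{F}^\pm$ and $G^\pm$ are quadratic in $M\sigma$, or linear with coefficient $\delta_1\sigma$. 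So taking $M$ large and then $\sigma_4$ small maps the iteration set into itself.
\item The same difference estimates make the map a contraction in $C^{1,\alpha}$, or one can use Schauder's theorem together with uniqueness of the linear problems. This gives existence and uniqueness of the solution in the small ball.
\end{itemize}
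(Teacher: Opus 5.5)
\textbf{The gap.} It is the step you flag as hardest: the $L$-independent $C^0$ bound for $\phi^+$. It cannot be filled as proposed, and in fact the bound is false. A barrier or maximum-principle argument in $x$ anchored only at the Dirichlet ends $x=0$ and $x=L$ has no $L$-independent version. Both circles of the annular cross-section of $\Omega^+_{f_\ast,L}$ carry Neumann conditions, so functions constant on cross-sections are a zero mode. Any admissible barrier must therefore grow with $L$ (typically like $x(L-x)$), and De Giorgi--Moser bounds on unit slabs need a global $L^2$ input that is itself $L$-dependent.

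To see that the bound fails whenever $u_0^+>0$, note that at a solution ${\bf W}^+={\bf 0}$ (since $G^+_\ast=0$). So the $\phi^+$-equation is exactly $\mathrm{div}(\varrho^+(S_0^+,\nabla\varphi^+)\nabla\varphi^+)=0$, with slip on $r=1$ and on $r=f_\ast(x)$. Hence the mass flux $\int_{\{f_\ast(x)<r<1\}}\varrho^+\partial_x\varphi^+\,dA=m^+$ is independent of $x$. Suppose $\|\phi^+\|_{2,\alpha,\Omega^+_{f_\ast,L}}\le K\sigma$ with $K$ independent of $L$. Linearizing gives $\alpha_{11}^+\int\partial_x\phi^+\,dA=m^+-\rho_0^+u_0^+\pi(1-f_\ast^2(x))+O(K^2\sigma^2)$. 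Because $\phi^+$ has $O(\sigma)$ Dirichlet data at both ends, integrating in $x$ forces $m^+$ to equal $\rho_0^+u_0^+$ times the $x$-average of the cross-sectional area, up to $O(\sigma/L+(K^2+K\delta_2)\sigma^2)$. Now take $f_\ast-\frac12=c\,\delta_2\sigma\chi$, where $\chi$ is a smooth plateau equal to $1$ on $[\frac L3+1,\frac{2L}3-1]$ and vanishing near $x=0$ and $x=L$; this $f_\ast$ lies in $\mathcal{I}(\delta_2)$. Then the cross-sectional mean of $\phi^+$ at $x=L/3$ is at least $c'u_0^+\delta_2\sigma L-C\sigma-CL(K^2+K\delta_2)\sigma^2$. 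For $\sigma$ small (depending on $K$) and $L$ large this exceeds $K\sigma$. So the first line of \eqref{pphi-est} cannot hold with $C$ independent of $L$. An exit normalization does not help, because the drift accumulates in the interior.

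\textbf{The paper's route and what is missing.} The paper does not get past this either. It obtains the $C^0$ bound from the comparison function $\mathfrak{N}^+$. The term $-\mathfrak{b}_1\theta^2$ is what makes $\mathfrak{L}(\mathfrak{N}^+)\le-2\mathfrak{n}^+$ compatible with $\partial_r\mathfrak{N}^+=0$ on $r=1$ and $\nabla\mathfrak{N}^+\cdot{\bf n}^+_{f_\ast}\ge 2\mathfrak{n}^+$ on the interface. But $\theta^2$ is not single-valued on the annulus. On the ``cut-open'' domain the comparison principle would need the sign of $\mathfrak{N}^+\pm\phi_h^+$ on the cut, which is not known. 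Indeed, no single-valued $x$-independent barrier can exist: $\Delta_\perp w<0$ on the annular cross-section together with $\partial_\nu w\ge 0$ on both circles contradicts the divergence theorem.

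The rest of your plan matches the paper's structure: freezing, the one-directional coupling through $\mathfrak{Y}_\ast$, even and odd reflections at $\Gamma_0^\pm$ and $\Gamma^\pm_{f_\ast,L}$, and a fixed point plus difference estimates. Your cross-sectional Poincar\'e and Robin-coercivity arguments for $\phi^-$ and ${\bf W}^-$ are a legitimate substitute for the barriers $\mathfrak{N}^-$ and $\mathfrak{M}^-$. They work there because a Dirichlet or good-sign Robin condition sits on the cross-sectional boundary.

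The missing idea is structural. What can plausibly be controlled uniformly in $L$ is $\nabla\phi^+$, for instance by splitting $\phi^+$ into two pieces. The first is its cross-sectional mean, governed by the one-dimensional flux relation above. The second is a mean-zero part, to which Poincar\'e--Wirtinger on the cross-section applies. Only $\nabla\phi^+$ enters $\mathfrak{F}^+$, $\mathfrak{Y}_\ast$, $\mathcal{A}_\ast$ and $\mathcal{J}_{\rm cd}$, so this is all the iteration actually needs. However, $\mathcal{I}(\delta_3^+)$ and \eqref{pphi-est} would have to be reformulated accordingly.
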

%%%%%%%%%%%%%%%%%%%%%%%%%%%%%%%%%%%%%%%%%%%%%
Hereafter, we regard any estimate constant $C$ to be chosen depending only on the data but independent of $L$ unless specified otherwise.

\begin{remark}
The elliptic estimates are obtained by adapting the regularity theory developed in \cite{bae2019contact3D}.
But, unlike the one-phase setting of \cite{bae2019contact3D}, the pressure continuity condition here couples two unknown Euler phases through the free boundary, and the elliptic variables cannot be estimated independently of this coupling. 
To obtain uniform $C^0$-estimates compatible with the two-phase transmission structure, we introduce comparison functions in the proof below.
These estimates provide the starting point for the subsequent \(C^{2,\alpha}\)-estimates and are essential for closing the coupled iteration scheme.
\end{remark}

%The \(C^0\)-estimates obtained above are a key new point in the present two-phase problem. Unlike the one-phase setting of [4], the pressure continuity condition here couples two unknown Euler phases through the free boundary, and the elliptic variables cannot be estimated independently of this coupling. The comparison functions \(M^\pm\) and \(N^\pm\) are introduced to obtain uniform \(C^0\)-bounds compatible with the two-phase transmission structure. These bounds provide the starting point for the subsequent \(C^{2,\alpha}\)-estimates and are essential for closing the coupled iteration scheme.

%%%%%%%%%%%%%%%%%%%%%%%%%%%%%%%%%%%%%%%%%%%%%
\begin{proof}[Proof of Lemma \ref{lemma-pphi}]
The proof is divided into four steps.

{\bf 1.}
For positive constants $\delta_k^{\pm}$ ($k=3,4$) to be determined later, we define four iteration sets $\mathcal{I}(\delta_k^{\pm})$ $(k=3,4)$ by 
\begingroup
\begin{align*}\label{Ite-set-34}\allowdisplaybreaks %%%%%%Lambda set 수정필요
\begin{aligned}
&\mathcal{I}(\delta_3^{\pm}):=\left\{\phi^{\pm}\in C^{2,\alpha}(\overline{\Omega_{f_\ast,L}^{\pm}})\left|\,\begin{aligned}
											&\phi^{\pm}\mbox{ is axisymmetric,}\\
											%&\phi^{\pm}=-\int_{\frac{1}{2}}^r v_{\rm en}^{\pm}(t)dt\mbox{ on }\Gamma_{0}^{\pm},\\
											&\partial_{x}^2\phi^{\pm}=0\mbox{ on }\Gamma_{0,\epsilon}^{\pm}\cup\Gamma_{f_{\ast},L}^{\pm},\\
											&\partial_{r}\phi^{\pm}=0\mbox{ on }\Gamma_{0,\epsilon}^{\pm}\cup\Gamma_{f_{\ast},L}^{\pm},\\
											%&\phi^{\pm}(x,0)=0\mbox{ for any }x\in[0,L],\\
											&\|\phi^{\pm}\|_{2,\alpha,\Omega^{\pm}_{f_{\ast},L}}\le \delta_3^{\pm}\sigma
											\end{aligned}\right.\right\},\\
&\mathcal{I}(\delta_4^{\pm}):=\left\{{\bf W}^{\pm}=\psi^{\pm}{\bf e}_{\theta}\in \left[C^{2,\alpha}(\overline{\Omega_{f_\ast,L}^{\pm}})\right]^3\left|\,\begin{aligned}
											&\psi^{\pm}\mbox{ is axisymmetric,}\\
											&\partial_x{\bf W}^{\pm}={\bf 0}\mbox{ on }\Gamma_{0}^{\pm}\cup\Gamma_{f_{\ast},L}^{\pm},\\
											&{\bf W}={\bf 0}\mbox{ on }\{r=0\},\\
											&\|{\bf W}^{\pm}\|_{2,\alpha,\Omega^{\pm}_{f_{\ast},L}}\le \delta_4^{\pm}\sigma
											\end{aligned}\right.\right\}.									
\end{aligned}
\end{align*}
\endgroup
%%

%\begin{remark}
%We don't need an iteration set $\mathcal{I}(\delta_4^+)$ since ${\bf W}^+={\bf 0}$. 
%\end{remark}

{\bf 2.}
Fix $(\phi_{\ast}^{\pm},\psi_{\ast}^{\pm}{\bf e}_{\theta})\in\mathcal{I}(\delta_3^{\pm})\times\mathcal{I}(\delta_4^{\pm})$. 
To simplify the notation, let us set
\begin{equation*}
\begin{split}
&{\bf t}_{\ast\ast}^{\pm}:=\nabla\times(\psi_{\ast}^{\pm}{\bf e}_{\theta})+\frac{\Lambda^{\pm}_{\ast}}{r}{\bf e}_{\theta},\\
&G_{\ast}^{\pm}:=G^{\pm}(S_{\ast}^{\pm},\Lambda_{\ast}^{\pm},\partial_rS_{\ast}^{\pm},\partial_r\Lambda_{\ast}^{\pm},\nabla(\phi_{\ast}^{\pm}+\varphi_0^{\pm}),{\bf t}_{\ast\ast}^{\pm})
\end{split}
\end{equation*}
for $G^{\pm}$ given by \eqref{3D-varphi0}. 
By the standard elliptic theory, in each region of $\Omega_{f_{\ast},L}^{\pm}$, the following boundary value problem
\begin{equation}\label{3D-psi-prob}
\left\{\begin{split}
	-\Delta{\bf W}^{\pm}=G^{\pm}_{\ast}{\bf e}_{\theta}\mbox{ in }\Omega_{f_{\ast},L}^{\pm},\\
	\partial_x{\bf W}^{\pm}={\bf 0}\mbox{ on }\Gamma_0^{\pm},\quad{\bf W}^{+}={\bf 0}\mbox{ on }\Gamma_{{\rm w},L},\quad
	\partial_x{\bf W}^{\pm}={\bf 0}\mbox{ on }\Gamma_{f_{\ast},L}^{\pm},\\
	\nabla{\bf W}^-\cdot{\bf n}_{f_\ast}^--\mu_{f_{\ast}}{\bf W}^-=\mathcal{A}_{\ast}{\bf e}_{\theta}
\mbox{ on }\partial\Omega_{f_{\ast},L}^{-}\cap\mathfrak{C}_{f_{\ast},L},\\
	{\bf W}^{+}={\bf 0}\mbox{ on }\partial\Omega_{f_{\ast},L}^{+}\cap\mathfrak{C}_{f_{\ast},L}\\
	\end{split}\right.
\end{equation}
has a unique solution ${\bf W}^{\pm}=(W_1^{\pm},W_2^{\pm},W_3^{\pm})\in [C^{1,\alpha}(\overline{\Omega_{f_{\ast},L}^{\pm}})\cap C^{2,\alpha}({\Omega_{f_{\ast},L}^{\pm}})]^3$.
In \eqref{3D-psi-prob}, $\mathcal{A}_{\ast}$ is given in \eqref{bd-A-def}.
%%%%%%%%%
One can prove that ${\bf W}^{\pm}$ have the forms of 
${\bf W}^{\pm}=\psi^{\pm}(x,r){\bf e}_{\theta}$ for axisymmetric functions $\psi^{\pm}$ satisfying \eqref{psi-pm-eq} by a small adjustment of \cite[Proof of Proposition 3.3]{bae20183}.
So we skip it here. 
We note here that ${\bf W}^+={\bf 0}$ since $G_{\ast}^+=0$ and we don't need the iteration set $\mathcal{I}(\delta_4^+)$.
Thus we set $\delta_4^+=0$.

%%%%%%%%%%%%%%%%%%%%%%%%%%
To obtain a $C^0$ estimate of ${\bf W}^{-}$, define a comparison function $\mathfrak{M}^{-}:\overline{\Omega_{{f_\ast},L}^-}\to\mathbb{R}^+$ by 
\begin{equation*}
\begin{split}
%&\mathfrak{M}^+(x_1,x_2):=-\mathfrak{m}^+x_2^2+4\mathfrak{m}^+,\\
&\mathfrak{M}^{-}(x_1,x_2):=-\mathfrak{m}^-x_2^2+4\mathfrak{m}^-
\end{split}
\end{equation*}
for $\mathfrak{m}^{-}$ given by 
\begin{equation*}
\begin{split}
%&\mathfrak{m}^+:=\|G_{\ast}^+{\bf e}_{\theta}\|_{0,\alpha,\Omega_{{f_{\ast}},L}^+},\\
&\mathfrak{m}^-:=\|G_{\ast}^-{\bf e}_{\theta}\|_{0,\alpha,\Omega_{{f_{\ast}},L}^-}+\|\mathcal{A}_{\ast}{\bf e}_{\theta}\|_{1,\alpha,\partial\Omega_{f_{\ast},L}^-\cap\mathfrak{C}_{f_\ast,L}}.
\end{split}
\end{equation*}
Since $-1\le x_2\le 1$, we have 
\begin{equation*}
0\le 3\mathfrak{m}^{-}\le\mathfrak{M}^{-}\le 4\mathfrak{m}^{-}.
\end{equation*}
Also, since $\frac{1}{4}\le f_{\ast}(x)\le\frac{3}{4}$, $0\le|f'_{\ast}|^2\le1$, and  $-1\le x_2\le 1$, we have
\begin{equation}\label{with}
\begin{split}
\nabla\mathfrak{M}^-\cdot{\bf n}_{f_{\ast}}^-
-\mu_{f_{\ast}}\mathfrak{M}^-
%=&\,\frac{-2\mathfrak{m}^-x_2\cos\theta}{\sqrt{1+|f'_{\ast}(x)|^2}}
%+\frac{-\mathfrak{m}^-x_2^2+4\mathfrak{m}^-}{f_{\ast}(x)\sqrt{1+|f'_{\ast}(x)|^2}}\\
=&\,\frac{\mathfrak{m}^-}{\sqrt{1+|f_{\ast}'(x)|^2}}\left(-2x_2\cos\theta-\frac{x_2^2}{f_{\ast}(x)}+\frac{4}{f_{\ast}(x)}\right)\\
\ge&\,\frac{\mathfrak{m}^-}{\sqrt{1+|f_{\ast}'(x)|^2}}\left(-2+\frac{3}{f_{\ast}(x)}\right)\\
\ge&\,\frac{\mathfrak{m}^-}{\sqrt{1+|\frac{3}{4}|^2}}\left(-2+4\right)\\
\ge&\,\frac{8}{5}\mathfrak{m}^-.
\end{split}
\end{equation}
Set $W_j^{-}:={\bf W}^{-}\cdot{\bf e}_j$ for the unit vector in the positive direction of $x_j$-axis for ${\bf x}=(x_1,x_2,x_3)\in\overline{\Omega^{-}_{f_{\ast},L}}$. 
Then, a direct computation with \eqref{with} yields 
\begin{equation*}
\left\{\begin{split}
\Delta(\mathfrak{M}^{-}\pm{ W}_j^{-})\le 0&\mbox{ in }\Omega^{-}_{{f_{\ast},L}},\\
\partial_x(\mathfrak{M}^{-}\pm{W}_j^{-})=0&\mbox{ on }\Gamma_{0}^{-}\cup\Gamma^{-}_{f_{\ast},L},\\
\mathfrak{M}^{-}\pm{ W}_j^{-}\ge0&\mbox{ on }\Gamma_{{\rm w},L}^-,\\
\nabla(\mathfrak{M}^{-}\pm{W}_j^{-})\cdot{\bf n}_{f_{\ast}}^--\mu_{f_{\ast}}(\mathfrak{M}^-\pm{W}_j^-)\ge 0&\mbox{ on }\partial\Omega_{{f_{\ast},L}}^{-}\cap\mathfrak{C}_{f_{\ast},L}.
\end{split}\right.
\end{equation*}
By the comparison principle and Hopf's lemma, we have %$\psi^{\pm}$ satisfies
\begin{equation*}
-\mathfrak{M}^{-}\le {W}_j^{-}\le \mathfrak{M}^{-}\mbox{ for }j=1,2,3,
\end{equation*}
which provides $C^0$ estimate %of ${\bf W}^{\pm}$ as follows:
\begin{equation*}
\|{\bf W}^{-}\|_{C^0(\overline{\Omega_{f_{\ast},L}^{-}})}\le C\mathfrak{m}^{-}.
\end{equation*}
To obtain $C^{2,\alpha}$ estimates, similarly to  \cite[Proof of Lemma 4.3]{bae2019contact3D}, we apply the even reflection method across the entrance $\Gamma_0^{-}$ and the exit $\Gamma^{-}_{f_{\ast},L}$.
%%%%%%%%%%%%%%%%%%%%%%%%%%%%%
%%%%%%%%%%%%%%%%%%%%%%%%%%%%%
Then we have the estimates 
%${\bf W}^{\pm}$ satisfy
\begin{equation}\label{psi+-est}
\begin{split}
%&\|{\bf W}^{+}\|_{2,\alpha,\Omega_{{f_{\ast},L}}^+}\le C\|G^{+}_{\ast}{\bf e}_{\theta}\|_{0,\alpha,\Omega^{+}_{{f_{\ast},L}}},\\
&\|{\bf W}^{-}\|_{2,\alpha,\Omega_{{f_{\ast},L}}^-}\le C\left(\|G^{-}_{\ast}{\bf e}_{\theta}\|_{0,\alpha,\Omega^{-}_{{f_{\ast}},L}}+\|\mathcal{A}_{\ast}{\bf e}_{\theta}\|_{1,\alpha,\partial\Omega_{{f_{\ast},L}}^{-}\cap\mathfrak{C}_{f_{\ast},L}}\right).
\end{split}
\end{equation}
%Since we have
One can check that there exists a small constant $\sigma_{\natural}>0$ depending only on the data and $(\delta_1,\delta_2,\delta_3^{\pm},\delta_4^{\pm})$ but independent of $L$ so that if 
\begin{equation}\label{sigma-natural}
\sigma\le\sigma_{\natural},
\end{equation}
then we have
\begin{equation}\label{GA-est}
\begin{split}
&\|G^{-}_{\ast}{\bf e}_{\theta}\|_{0,\alpha,\Omega^{\pm}_{{f_{\ast},L}}}
%\le C\left(\|S_{\ast}^{\pm}-S_0^{\pm}\|_{1,\alpha,\Omega^{\pm}_{{f_{\ast},L}}}+\|\frac{\Lambda_{\ast}^{\pm}}{r}\|\right)
\le C\delta_1\sigma,\\
&\|\mathcal{A}_{\ast}{\bf e}_{\theta}\|_{1,\alpha,\Omega_{{f_{\ast},L}}^-\cap\mathfrak{C}_{f_{\ast},L}}\le 
%C\left(\|S_{\ast}^--S_0^-\|_{1,\alpha,\Omega^{\pm}_{{f_{\ast},L}}}+\|f_{\ast}-\frac{1}{2}\|_{2,\alpha,(0,L)}^2+\|p^+-p_0\|\right).
C\left(|\delta_2|^2\sigma+\delta_1+\delta_3^++\delta_4^+\right)\sigma.
\end{split}
\end{equation}
Then it follows from \eqref{psi+-est}-\eqref{GA-est} that 
\begin{equation}\label{W-est-proof}
\begin{split}
%&\|{\bf W}^+\|_{2,\alpha,\Omega^+_{{f_{\ast},L}}}\le C_{\sharp}^+\delta_1^{+}\sigma,\\
&\|{\bf W}^{-}\|_{2,\alpha,\Omega^{-}_{{f_{\ast},L}}}\le C_{\sharp}^-\left(|\delta_2|^2\sigma+\delta_1+\delta_3^++\delta_4^+\right)\sigma\end{split}
\end{equation}
for a  constant $C^{-}_{\sharp}>0$ depending only on the data but independent of $L$.
%%%%%%%%%

%%%%%%%%%
{\bf 3.}
To simplify the notation, let us set 
\begin{equation*}
\mathfrak{F}_{\ast}^{\pm}:={\mathfrak F}^{\pm}(S_{\ast}^{\pm}-S_0^{\pm},\nabla\phi^{\pm}_{\ast},{\bf t}^{\pm}_{\ast})
\end{equation*}
for $\mathfrak{F}$ defined in \eqref{def-FF}.
Now we solve the following boundary value problems for $\phi^{\pm}$;
\begin{equation}\label{3D-phi-pb}
\left\{\begin{split}
	\mathfrak{L}(\phi^{\pm})=\mbox{div}{\mathfrak F}^{\pm}_{\ast}\mbox{ in }\Omega_{f_{\ast},L}^{\pm},\\
	\phi^{\pm}=-\int_{\frac{1}{2}}^r v_{\rm en}^{\pm}(t)dt\mbox{ on }\Gamma_0^{\pm},\quad \partial_r\phi^{+}=0\mbox{ on }\Gamma_{{\rm w},L},\quad\phi^{\pm}=0\mbox{ on }\Gamma_{f_{\ast},L}^{\pm},\\
	\phi^-=0\mbox{ on }\partial\Omega_{f_{\ast},L}^{-}\cap\mathfrak{C}_{f_{\ast},L},\\
	\nabla\phi^{+}\cdot{\bf n}^{+}_{f_{\ast}}=-\nabla\varphi_0^{+}\cdot{\bf n}_{f_{\ast}}^{+}=\frac{u_0^{+}f_{\ast}'}{\sqrt{1+|f_{\ast}'|^2}}\mbox{ on }\partial\Omega_{f_{\ast},L}^{+}\cap\mathfrak{C}_{f_{\ast},L}.
	\end{split}\right.
\end{equation}
% has a unique solution $\phi^{\pm}\in C^{1,\alpha}(\overline{\Omega_{f_{\ast},L}^{\pm}})\cap C^{2,\alpha}({\Omega^{\pm}_{f_{\ast},L}})$. 
 %Moreover, $\phi^{\pm}$ satisfies 
 %$$\partial_{xx}\phi^{\pm}=0\quad\mbox{on }\Gamma_{0,\epsilon}^{\pm}\cup\Gamma_{f_{\ast},L}^{\pm}.$$
%
%
To simply the boundary conditions on $\Gamma_0^{\pm}$, let us set 
 \begin{equation}\label{gen-def}
 \begin{split}
 &\mathfrak{g}^{\pm}(r):=-\int_{\frac{1}{2}\pm\frac{1}{2}}^{r}v_{\rm en}^{\pm}(t)dt,\\%-\int_{\frac{1}{2}\pm\frac{1}{2}}^{\frac{1}{2}}v_{\rm en}^{\pm}(t)dt,\\
& \mathfrak{g}_{\rm en}^{+}(x,r):=\eta(x)\mathfrak{g}^{+}\left(\frac{1-\frac{1}{2}}{1-f_{\ast}(x)}(r-1)+1\right),\\
&\mathfrak{g}_{\rm en}^{-}(x,r):=\eta(x)\mathfrak{g}^{-}\left(\frac{r}{2f_{\ast}(x)}\right)
\end{split}
 \end{equation}
 for a smooth function $\eta$ such that 
 \begin{equation*}
 \eta=1\mbox{ for } x<\frac{L}{10},\,\,\eta=0\mbox{ for }x>\frac{9L}{10},\,\,|\eta'(x)|\le 2, \mbox{ and }\,\,|\eta''(x)|\le 2.
 \end{equation*}
%Set $\phi_{\rm h}^{\pm}:=\phi^{\pm}-\mathfrak{g}_{\rm en}^{\pm}$. 
Then the problems \eqref{3D-phi-pb} can be rewritten as  problems for $\phi_h^{\pm}:=\phi^{\pm}-\mathfrak{g}_{\rm en}^{\pm}$ as follows:
\begin{equation}\label{psi-pb}
	\left\{\begin{split}
	\mathfrak{L}(\phi_h^{\pm})=\mbox{div}{\mathfrak F}^{\pm}_{\ast}+\mathfrak{L}(\mathfrak{g}_{\rm en}^{\pm})=:\mathfrak{H}^{\pm}&\mbox{ in }\Omega_{{f_{\ast},L}}^{\pm},\\
	\phi_h^{\pm}=0\mbox{ on }\Gamma_0^{\pm},\quad \partial_r\phi_h^+=0\mbox{ on }\Gamma_{{\rm w},L},\quad\phi^{\pm}_h=0&\mbox{ on }\Gamma_{f_{\ast},L}^{\pm},\\
	\phi_h^-=0&\mbox{ on }\partial\Omega_{{f_{\ast},L}}^{-}\cap\mathfrak{C}_{f_{\ast},L},\\
\nabla\phi^{+}_h\cdot{\bf n}^{+}_{f_{\ast}}=\frac{u_0^{+}f_{\ast}'}{\sqrt{1+|f_{\ast}'|^2}}=:\mathfrak{h}^+&\mbox{ on }\partial\Omega_{{f_{\ast},L}}^{+}\cap\mathfrak{C}_{f_{\ast},L}.
	\end{split}\right.
\end{equation}
By the standard elliptic theory, each of the problems in \eqref{psi-pb} has a unique solution and  $\phi_h^{\pm}\in C^{1,\alpha}(\overline{\Omega_{{f_{\ast},L}}^{\pm}})\cap C^{2,\alpha}({\Omega^{\pm}_{{f_{\ast}},L}})$.

 %%%%%%%%%%%%%%%%%%%%%%%%%%%%%%%%%%%%%%%%%%%%%%%%%%%%%%%
To obtain $C^0$ estimates of $\phi_h^{\pm}$, define comparison functions $\mathfrak{N}^{\pm}:\overline{\Omega_{{f_\ast},L}^\pm}\to\mathbb{R}^+$ by 
\begin{equation*}
\begin{split}
&\mathfrak{N}^{+}(x_1,x_2,x_3):=\left[\frac{1}{\alpha_{22}^+}\left(-\mathfrak{b}_1\theta^2-\mathfrak{b}_2\sin\left(\frac{\pi}{2}r\right)+\mathfrak{b}_3\right)+1\right]\mathfrak{n}^+,\\
%-\frac{\mathfrak{n}^+}{\alpha_{22}^+}x_2^2+\left(\frac{2}{\alpha_{22}^+}+1\right)\mathfrak{n}^+,\\
&\mathfrak{N}^-(x_1,x_2,x_3):=\left[-\frac{1}{\alpha_{22}^-}x_2^2+\left(\frac{2}{\alpha_{22}^-}+1\right)\right]\mathfrak{n}^-
\end{split}
\end{equation*}
for $\mathfrak{b}_k$ ($k=1,2,3$) and $\mathfrak{n}^{\pm}$ given by 
\begin{equation}\label{def-nn}
\begin{split}
&\mathfrak{b}_1:=\frac{\mathfrak{b}_2\left(\frac{\pi}{2}\right)^2+2}{2},\quad 
\mathfrak{b}_2:=\frac{2\sqrt{1+\left(\frac{3}{4}\right)^2}}{\max|\alpha_{22}^+|\cos\left(\frac{3\pi}{8}\right)\frac{\pi}{2}},\quad \mathfrak{b}_3:=\mathfrak{b}_1(4\pi)^2+\mathfrak{b}_2+1,\\
&\mathfrak{n}^+:=\|\mathfrak{H}^+\|_{1,\alpha,\Omega_{{f_{\ast},L}}^+}+\|\mathfrak{h}^+\|_{1,\alpha,\partial\Omega_{{f_{\ast},L}}^{+}\cap\mathfrak{C}_{f_{\ast},L}},\\
&\mathfrak{n}^-:=\|\mathfrak{H}^-\|_{1,\alpha,\Omega_{{f_{\ast},L}}^-}.
\end{split}
\end{equation}
The comparison argument for \(\mathfrak{N}^+\) is carried out on the cut-open cylindrical coordinate domain \(0\le \theta\le 2\pi\).
%This auxiliary function is used only on the cut-open cylindrical coordinate domain for the purpose of the comparison argument.
Note that %$\mathfrak{N}^{\pm}$ are well-defined since $\Omega\cap\{r=0\}\subset \Omega_{f_{\ast},L}^-$ and 
$0<\nu\le \alpha_{22}^{\pm}\le \frac{1}{\nu}$ for a constant $\nu\in(0,1)$ by \eqref{alpha-positive}.
Also, since $0\le\theta\le 2\pi$, $0\le r\le 1$, and $-1\le x_2\le 1$, we have 
\begin{equation}\label{com-0}
0\le \mathfrak{n}^{+}\le \mathfrak{N}^{+}\le\left(\frac{\mathfrak{b}_3}{\alpha_{22}^+}+1\right)\mathfrak{n}^+\mbox{ and }0\le \mathfrak{n}^-\le\mathfrak{N}^-\le \left(\frac{2}{\alpha_{22}^-}+1\right)\mathfrak{n}^-.
\end{equation}
By a direct computation, we have 
\begin{equation*}
\begin{split}
\Delta\mathfrak{N}^+
&=\partial_{x}^2\mathfrak{N}^++\frac{1}{r}\partial_r\left(r\partial_r\mathfrak{N}^+\right)+\frac{1}{r^2}\partial_{\theta}^2\mathfrak{N}^+\\
&=\frac{1}{\alpha_{22}^+}\left(-\frac{1}{r}\mathfrak{b}_2\cos\left(\frac{\pi}{2}r\right)\frac{\pi}{2}
+\mathfrak{b}_2\sin\left(\frac{\pi}{2}r\right)\left(\frac{\pi}{2}\right)^2
-\frac{2\mathfrak{b}_1}{r^2}\right)\mathfrak{n}^+\\
&\le \frac{1}{\alpha_{22}^+}\left(\mathfrak{b}_2\left(\frac{\pi}{2}\right)^2-\frac{2\mathfrak{b}_1}{r^2}\right)\mathfrak{n}^+\\
&\le \frac{1}{\alpha_{22}^+}\left(-\frac{2}{r^2}\right)\mathfrak{n}^+\le -\frac{2}{\alpha_{22}^+}\mathfrak{n}^+.
\end{split}
\end{equation*}
Since $\partial_{xx}\mathfrak{N}^{\pm}=0$ and $\alpha_{22}^{\pm}=\alpha_{33}^{\pm}$ by the definition of $\alpha_{ij}$ given  in \eqref{def-aij}, we have 
\begin{equation}\label{com-1}
\mathfrak{L}(\mathfrak{N}^+)=\sum_{i=1}^3\alpha_{ii}^+\partial_{ii}\mathfrak{N}^+
=\alpha_{22}^+\Delta\mathfrak{N}^+\le -2\mathfrak{n}^+
\end{equation}
and
\begin{equation}\label{com-ne-1}
\mathfrak{L}(\mathfrak{N}^-)=\sum_{i=1}^3\alpha_{ii}^-\partial_{ii}\mathfrak{N}^-
=\alpha_{22}^-\Delta\mathfrak{N}^-=-2\mathfrak{n}^-.
\end{equation}
Also, by a direct computation, we have
\begin{equation}\label{com-2}
\partial_r\mathfrak{N}^+=-\frac{1}{\alpha_{22}^+}\mathfrak{b}_2\mathfrak{n}^+\cos\left(\frac{\pi}{2}r\right)\frac{\pi}{2}=0\mbox{ for }r=1
\end{equation}
and
\begin{equation}\label{com-3}
\begin{split}
\nabla\mathfrak{N}^+\cdot{\bf n}_{f_{\ast}}^+
&=\frac{-\partial_r\mathfrak{N}^+}{\sqrt{1+|f_{\ast}|^2}}
=\frac{\mathfrak{b}_2\cos\left(\frac{\pi}{2}r\right)\frac{\pi}{2}}{\alpha_{22}^+\sqrt{1+|f_{\ast}|^2}}\mathfrak{n}^+
\ge \frac{\mathfrak{b}_2\cos\left(\frac{3\pi}{8}\right)\frac{\pi}{2}}{\alpha_{22}^+\sqrt{1+\left(\frac{3}{4}\right)^2}}\mathfrak{n}^+
\ge 2\mathfrak{n}^+\mbox{ on }\Omega_{{f_{\ast},L}}^{+}\cap\mathfrak{C}_{f_{\ast},L}
\end{split}
\end{equation}
since $\frac{1}{4}\le r=f_{\ast}(x)\le\frac{3}{4}$ on $\Omega_{{f_{\ast},L}}^{+}\cap\mathfrak{C}_{f_{\ast},L}$.
It follows from \eqref{com-0}-\eqref{com-3} that 
\begin{equation*}
\left\{\begin{split}
\mathfrak{L}(\mathfrak{N}^{+}\pm\phi_h^{+})\le 0&\mbox{ in }\Omega_{{f_{\ast},L}}^{+},\\
\mathfrak{N}^{+}\pm\phi_h^{+}\ge0&\mbox{ on }\Gamma_0^{+}\cup\Gamma_{f_{\ast},L}^{+},\\
\partial_r(\mathfrak{N}^{+}\pm\phi_h^{+})=0&\mbox{ on }\Gamma_{{\rm w},L}^{+},\\
	\nabla(\mathfrak{N}^+\pm\phi_h^+)\cdot{\bf n}_{f_{\ast}}^+\ge 0&\mbox{ on }\partial\Omega_{{f_{\ast},L}}^{+}\cap\mathfrak{C}_{f_{\ast},L},
\end{split}\right.
\end{equation*}
and
\begin{equation*}
\left\{\begin{split}
\mathfrak{L}(\mathfrak{N}^{-}\pm\phi_h^{-})\le0&\mbox{ in }\Omega_{{f_{\ast},L}}^{-},\\
\mathfrak{N}^{-}\pm\phi_h^{-}\ge0&\mbox{ on }\Gamma_0^{-}\cup\Gamma_{f_{\ast},L}^{-},\\
\mathfrak{N}^{-}\pm\phi_h^{-}\ge 0&\mbox{ on }\partial\Omega_{{f_{\ast},L}}^{-}\cap\mathfrak{C}_{f_{\ast},L}.
\end{split}\right.
\end{equation*}
Since $\mathfrak{L}$ is uniformly elliptic, the comparison principle implies that $\phi_h^{\pm}$ satisfy
\begin{equation*}
-\mathfrak{N}^{\pm}\le\phi_h^{\pm}\le\mathfrak{N}^{\pm},
\end{equation*}
from which it follows that 
\begin{equation*}
\|\phi_h^{\pm}\|_{0,\Omega_{f_{\ast},L}^{\pm}}\le C\mathfrak{n}^{\pm}
\end{equation*}
for $\mathfrak{n}^{\pm}$ given in \eqref{def-nn}.

The compatibility conditions for $(f_{\ast},S_{\ast}^{\pm},\Lambda_{\ast}^{\pm},\phi_{\ast}^{\pm}, \psi_{\ast}^{\pm})$ and the definitions of $\mathfrak{g}_{\rm en}^{\pm}$ imply
%$\partial_xS_{\ast}^{\pm}=\partial_x\Lambda_{\ast}^{\pm}=\partial_r\phi_{\ast}^{\pm}=\partial_x^2\phi_{\ast}^{\pm}=\partial_x\psi_{\ast}^{\pm}=0$ on $\Gamma_{0,\epsilon}^{\pm}\cup\Gamma_{f_{\ast},L}^{\pm}$, it holds that $\mbox{div}\mathfrak{F}_{\ast}^{\pm}=0$ on $\Gamma_{0,\epsilon}^{\pm}\cup\Gamma_{f_{\ast},L}^{\pm}$.
%Also, by the definition of $\mathfrak{g}_{\rm en}^{\pm}$, it holds that $\mathfrak{L}(\mathfrak{g}_{\rm en}^{\pm})=0$ on $\Gamma_{0,\epsilon}^{\pm}\cup\Gamma_{f_{\ast},L}^{\pm}$. Thus 
%we have  
%it holds that 
\begin{equation*}
\mathfrak{H}^{\pm}=0\mbox{ on }\Gamma_{0,\epsilon}^{\pm}\cup\Gamma_{f_{\ast},L}^{\pm}\mbox{ and }
\mathfrak{h}^+=0\mbox{ at }x=0,L
\end{equation*}
for $\mathfrak{H}^{\pm}$ and $\mathfrak{h}^{\pm}$ defined in \eqref{psi-pb}.
Similarly to \cite[Proof of Lemma 4.3]{bae2019contact3D}, to obtain $C^{2,\alpha}$ estimates up to the boundary, we can apply an odd reflection method across $\Gamma_{0,\epsilon}^{\pm}\cup\Gamma_{f_{\ast},L}^{\pm}$.
Then, we obtain the estimates 
\begin{equation}\label{est-phi-h}
\begin{split}
&\|\phi_h^{+}\|_{2,\alpha,\Omega_{f_{\ast},L}^{+}}\le C\left(\|\mathfrak{H}^{+}\|_{0,\alpha,\Omega_{{f_{\ast},L}}^{+}}+u_0^+\|f_{\ast}'\|_{1,\alpha,(0,L)}\right),\\
&\|\phi_h^{-}\|_{2,\alpha,\Omega_{{f_{\ast},L}}^{-}}\le C\|\mathfrak{H}^{-}\|_{0,\alpha,\Omega_{f_{\ast},L}^{-}}.
\end{split}
\end{equation}
Since $\phi^{\pm}=\phi_h^{\pm}+\mathfrak{g}_{\rm en}^{\pm}$ by the definitions, the estimates in \eqref{est-phi-h} imply that $\phi^{\pm}$ satisfy
\begin{equation}\label{phi-est-mid}
\begin{split}
&\|\phi^{+}\|_{2,\alpha,\Omega_{f_{\ast},L}^{+}}\le C\left(\|\mathfrak{F}_{\ast}^{+}\|_{1,\alpha,\Omega_{f_{\ast},L}^{+}}+\|\mathfrak{g}_{\rm en}^{+}\|_{2,\alpha,\Omega_{f_{\ast},L}^{+}}+u_0^+\|f_{\ast}'\|_{1,\alpha,(0,L)}\right),\\
&\|\phi^{-}\|_{2,\alpha,\Omega_{f_{\ast},L}^{-}}\le C\left(\|\mathfrak{F}_{\ast}^{-}\|_{1,\alpha,\Omega_{f_{\ast},L}^{-}}+\|\mathfrak{g}_{\rm en}^{-}\|_{2,\alpha,\Omega_{f_{\ast},L}^{-}}\right).
\end{split}
\end{equation}
One can check that there exists a small constant $\sigma_{\sharp}>0$ depending only on the data and $(\delta_1,\delta_2,\delta_3^{\pm},\delta_4^{\pm})$ but independent of $L$ so that if 
\begin{equation}\label{sigma-sharp}
\sigma\le\sigma_{\sharp},
\end{equation}
then we have 
\begin{equation}\label{est-FG}
 \|\mathfrak{F}_{\ast}^{\pm}\|_{1,\alpha,\Omega_{f_{\ast},L}^{\pm}}\le C\left(\delta_1+\delta_4^{\pm}\right)\sigma+C(\delta_3^\pm)^2\sigma^2\mbox{ and }\|\mathfrak{g}_{\rm en}^{\pm}\|_{2,\alpha,\Omega_{f_{\ast},L}^{\pm}}\le C\sigma.
 \end{equation}
 Then it follows from \eqref{phi-est-mid} and \eqref{est-FG} that
\begin{equation}\label{phi-est-proof}
\begin{split}
&\|\phi^{+}\|_{2,\alpha,\Omega^{+}_{f_{\ast},L}}\le C^+_{\flat}(1+u_0^+\delta_2+\delta_4^+)\sigma+C_+^{\flat}(\delta_3^+)^2\sigma^2,\\
&\|\phi^{-}\|_{2,\alpha,\Omega^{-}_{f_{\ast},L}}\le C^-_{\flat}(1+\delta_1+\delta_4^{-})\sigma+C_-^{\flat}(\delta_3^-)^2\sigma^2
\end{split}
\end{equation}
for constants $C^{\pm}_{\flat}>0$ and $C_{\pm}^{\flat}>0$ depending only on the data but independent of $L$.
%%%%

%%%%%%%
For any $\theta\in[0,2\pi)$, define functions $\phi_{\theta}^{\pm}$ by 
\begin{equation*}
\phi_{\theta}^{\pm}({\bf x}):=\phi_h^{\pm}(x_1,x_2\cos\theta-x_3\sin\theta,x_2\sin\theta+x_3\cos\theta).
\end{equation*}
Obviously, it holds that $\phi_{\theta}^-=\phi_{h}^-$ on $\partial\Omega_{f_\ast,L}^-$. 
By the definitions of $\phi_{\theta}^{+}$, we have 
\begin{equation*}
\nabla\phi_{\theta}^+\cdot{\bf n}_{f_{\ast}}^+=\frac{f_{\ast}'\partial_x\phi_{\theta}^+-\partial_r\phi_\theta^+}{\sqrt{1+|f_{\ast}|^2}}=\nabla\phi_h^+\cdot{\bf n}_{f_{\ast}}^+ \mbox{ on }\partial\Omega_{{f_{\ast},L}}^{+}\cap\mathfrak{C}_{f_{\ast},L},
\end{equation*}
from which it holds that  $\phi_{\theta}^+=\phi_h^+$ on $\partial\Omega_{f_\ast,L}^+$.
Since $\alpha_{22}=\alpha_{33}$, $\mathfrak{L}(\phi_\theta^{\pm})=\mathfrak{L}(\phi_h^{\pm})$ hold in $\Omega_{f_{\ast},L}^{\pm}$.
By the uniqueness of a solution to the problem \eqref{psi-pb}, it holds that $\phi_\theta^{\pm}\equiv \phi_h^{\pm}$. Thus $\phi_h^{\pm}$ are axisymmetric.
Since $\mathfrak{g}_{\rm en}^{\pm}$ are axisymmetric, $\phi^{\pm}=\phi_h^{\pm}+\mathfrak{g}_{\rm en}^{\pm}$ are axisymmetric.
%%%
Obviously, by the boundary conditions of $\phi^{\pm}$, it holds that $\partial_r\phi^{\pm}=0$ on $\Gamma_{0,\epsilon}^{\pm}\cup\Gamma_{f_{\ast},L}^{\pm}$.
Since $\partial_r\phi^{\pm}=\mbox{div}\mathfrak{F}_{\ast}^{\pm}=0$ on $\Gamma_{0,\epsilon}^{\pm}\cup\Gamma_{f_{\ast},L}^{\pm}$, we have 
$\partial_x^2\phi^{\pm}=0$ on $\Gamma_{0,\epsilon}^{\pm}\cup\Gamma_{f_{\ast},L}^{\pm}$ by  the first equation in \eqref{3D-phi-pb}.

%%%%%%%%%%%%%%%%%%%%%%%%%%%%%%%%%%%%%%%%%%%%%%%%%%%%%%%
{\bf 4.}
Define  iteration mappings $\mathcal{M}^{\pm}:\mathcal{I}(\delta_3^{\pm})\times\mathcal{I}(\delta_4^{\pm})\to [C^{2,\alpha}(\overline{\Omega^{\pm}_{f_{\ast},L}})]^4$ by 
$$\mathcal{M}^{\pm}:(\phi^{\pm}_{\ast},\psi^{\pm}_{\ast}{\bf e}_{\theta})\mapsto(\phi^{\pm},\psi^{\pm}{\bf e}_{\theta}),$$
where $(\phi^{\pm},\psi^{\pm}{\bf e}_{\theta})$ is the unique axisymmetric solution of \eqref{3D-psi-phi} associated with $(f_{\ast}, S_{\ast}^{\pm},\frac{\Lambda_{\ast}^{\pm}}{r}{\bf e}_{\theta}, \phi_{\ast}^{\pm},\psi_{\ast}^{\pm}{\bf e}_{\theta})$.
%%%%%%%
Recalling the estimates in \eqref{W-est-proof} and \eqref{phi-est-proof}, we have 
\begin{equation}\label{phi-one}
\begin{split}
%&\|\psi^+{\bf e}_{\theta}\|_{2,\alpha,\Omega^+_{{f_{\ast},L}}}\le C_{\sharp}^+\delta_1^{+}\sigma,\\
&\|\psi^{-}{\bf e}_{\theta}\|_{2,\alpha,\Omega^{-}_{{f_{\ast},L}}}\le C_{\sharp}^-\left(|\delta_2|^2\sigma+\delta_1+\delta_3^++\delta_4^+\right)\sigma,
\end{split}
\end{equation}
and
\begin{equation*}
\begin{split}
&\|\phi^{+}\|_{2,\alpha,\Omega^{+}_{f_{\ast},L}}\le C^+_{\flat}(1+u_0^+\delta_2+\delta_1^{+}+\delta_4^{+})\sigma+C_+^{\flat}(\delta_3^+)^2\sigma^2,\\
&\|\phi^{-}\|_{2,\alpha,\Omega^{-}_{f_{\ast},L}}\le C^-_{\flat}(1+\delta_1^{-}+\delta_4^{-})\sigma+C_-^{\flat}(\delta_3^-)^2\sigma^2
\end{split}
\end{equation*}
for positive constants $C_{\sharp}^{-}$, $C^{\pm}_{\flat}$, and $C_{\pm}^{\flat}$ depending only on the data but independent of $L$.
%%%%%%%%
We choose $\sigma_4$, $\delta_3^{\pm}$, and $\delta_4^{-}$ satisfying 
\begin{equation*}
\begin{split}
&\sigma_4\le\min\left\{\frac{1}{4\delta_2}, \frac{1}{2C_+^{\flat}\delta_3^+},\frac{\delta_4^-}{2C_{\sharp}^-\delta_2^2},\frac{1}{2C_-^{\flat}\delta_3^-},\sigma_{\natural},\sigma_{\sharp}\right\}=:\sigma^{\flat},\\
&\delta_3^{+}:=2C_{\flat}^+(1+u_0^+\delta_2),\\
%%%%
%\delta_4^{+}:=C_{\sharp}^+\delta_1^+,\\
%%%%
&\delta_3^{-}:=2C_{\flat}^-(1+\delta_1+\delta_4^-),\,\,\delta_4^{-}:=2C^-_{\sharp}(\delta_1+\delta_3^+)
\end{split}
\end{equation*}
for $\sigma_{\natural}$ and $\sigma_{\sharp}$ given in \eqref{sigma-natural} and \eqref{sigma-sharp}, respectively.
Then we have $(\phi^{\pm},\psi^{\pm}{\bf e}_{\theta})\in\mathcal{I}(\delta_3^{\pm})\times\mathcal{I}(\delta_4^{\pm})$ and
%%%
\begin{equation}\label{final-est-phi-psi}
\begin{split}
&\|\phi^{+}\|_{2,\alpha,\Omega^{+}_{f_{\ast},L}}+\|\psi^+{\bf e}_{\theta}\|_{2,\alpha,\Omega^+_{{f_{\ast},L}}}\le C(1+u_0^+\delta_2)\sigma,\\
&\|\phi^{-}\|_{2,\alpha,\Omega^{-}_{f_{\ast},L}}+\|\psi^{-}{\bf e}_{\theta}\|_{2,\alpha,\Omega^{-}_{{f_{\ast},L}}}\le C(1+u_0^+\delta_2+\delta_1)\sigma.
\end{split}
\end{equation}
%%%%%%%%%%%%%%%%%%%%%%%%%%%%%%%
%

Note that the iteration sets $\mathcal{I}(\delta_3^{\pm})$ and $\mathcal{I}(\delta_4^{\pm})$ are convex and compact subsets of $C^{2,\alpha/2}(\overline{\Omega^{\pm}_{f_{\ast},L}})$ and $\left[C^{2,\alpha/2}(\overline{\Omega^{\pm}_{f_{\ast},L}})\right]^3$, respectively. 
Also, similarly to the mapping $\mathcal{J}_{\rm cd}$ in \eqref{J-cd}, by using the uniqueness of a solution to the problem \eqref{3D-psi-phi}, one can prove that $\mathcal{M}^{\pm}$ are continuous in $\left[C^{2,\alpha/2}(\overline{\Omega^{\pm}_{f_{\ast},L}})\right]^4$.
%%%%%%%%%%%%%%%%%%%%%%%%%%%%
Then  the Schauder fixed point theorem yields that the mappings $\mathcal{M}^{\pm}$ have  fixed points $(\phi^{\pm}_{\sharp},\psi^{\pm}_{\sharp}{\bf e}_{\theta})\in\mathcal{I}(\delta_3^{\pm})\times\mathcal{I}(\delta_4^{\pm})$, respectively.
Obviously, $(\phi^{\pm}_{\sharp},\psi^{\pm}_{\sharp}{\bf e}_{\theta})$ is a solution to the boundary value problem \eqref{3D-psi-phi}, and it satisfies the estimate \eqref{pphi-est} by \eqref{final-est-phi-psi}.

%%%%%%%%%%%%%%%%%
Suppose that there are two solutions $(\phi_1^{\pm},\psi_1^{-}{\bf e}_{\theta})$ and $(\phi_2^{\pm},\psi_2^{-}{\bf e}_{\theta})$.
Like \eqref{psi+-est} and \eqref{phi-one}, there exists a small constant $\sigma_4^{\ast}\in(0,\sigma^{\flat}]$ depending only on the data and ($\delta_1$, $\delta_2)$ but independent of $L$ so that if $\sigma\le\sigma_4^{\ast}$, then we have 
\begin{equation}\label{+est}
\begin{split}
&\|\phi_1^+-\phi_2^+\|_{2,\alpha,\Omega^{+}_{f_{\ast},L}}
\le C^{\flat}\sigma\|{\phi}_1^+-{\phi}_2^+\|_{2,\alpha,\Omega^{+}_{f_{\ast},L}}
\end{split}
\end{equation}
for a constant $C^{\flat}>0$ depending only on the data but independent of $L$
and
\begin{equation}\label{-est}
\begin{split}
&\|\phi_1^--\phi_2^-\|_{2,\alpha,\Omega^{-}_{f_{\ast},L}}
\le C(\delta_1+1)\sigma\|{\phi}_1^--{\phi}_2^-\|_{2,\alpha,\Omega^{-}_{f_{\ast},L}}+ C\|{\psi}_1^-{\bf e}_{\theta}-{\psi}_2^-{\bf e}_{\theta}\|_{2,\alpha,\Omega^{-}_{f_{\ast},L}},\\
&\|\psi_1^-{\bf e}_{\theta}-\psi_2^-{\bf e}_{\theta}\|_{2,\alpha,\Omega^{-}_{f_{\ast},L}}
\le C\delta_1\sigma\left(\|{\phi}_1^--{\phi}_2^-\|_{2,\alpha,\Omega^{-}_{f_{\ast},L}}+ \|{\psi}_1^-{\bf e}_{\theta}-{\psi}_2^-{\bf e}_{\theta}\|_{2,\alpha,\Omega^{-}_{f_{\ast},L}}\right)\\
&\qquad\qquad\qquad\qquad\qquad
+C\|{\phi}_1^+-{\phi}_2^+\|_{2,\alpha,\Omega^{+}_{f_{\ast},L}}.
\end{split}
\end{equation}
If $\sigma$ satisfies
\begin{equation*}
\sigma\le \frac{1}{2C^{\flat}},
\end{equation*}
then \eqref{+est} implies that $\phi_1^+=\phi_2^+$.
This and \eqref{-est} yield that we have 
\begin{equation}
\begin{split}
&\|\phi_1^--\phi_2^-\|_{2,\alpha,\Omega^{-}_{f_{\ast},L}}+\|\psi_1^-{\bf e}_{\theta}-\psi_2^-{\bf e}_{\theta}\|_{2,\alpha,\Omega^{-}_{f_{\ast},L}}\\
&\le C_{\diamond}(\delta_1+1)\sigma\left(\|\phi_1^--\phi_2^-\|_{2,\alpha,\Omega^{-}_{f_{\ast},L}}+\|\psi_1^-{\bf e}_{\theta}-\psi_2^-{\bf e}_{\theta}\|_{2,\alpha,\Omega^{-}_{f_{\ast},L}}\right)
\end{split}
\end{equation}
for a constant $C_{\diamond}>0$ depending only on the data but independent of $L$.
If $\sigma$ satisfies
\begin{equation*}
\sigma\le\frac{1}{2C_{\diamond}(\delta_1+1)},
\end{equation*}
then $(\phi_1^-,\psi_1^-{\bf e}_{\theta})=(\phi_2^-,\psi_2^-{\bf e}_{\theta})$.
Therefore, for 
\begin{equation*}
\sigma\le\sigma_4:=\left\{\sigma_4^{\ast}, \frac{1}{2C^{\flat}},\frac{1}{2C_{\diamond}(\delta_1+1)}\right\},
\end{equation*}
 the solution is unique.  
This finishes the proof of Lemma \ref{lemma-pphi}.
\end{proof}

%%%%%%%%%%%%%%%%%%%%%%%%%%%%%%

Clearly, Lemma \ref{lemma-pphi} implies Lemma \ref{Lemma-fixed}.
The proof of Lemma \ref{Lemma-fixed} is completed.
\qed

%%%%%%%%%%%%%%%%%%%%%%%%%
%%%%%%%%%%%%%%%%%%%%%%%%%%%%%%%%%%%%%%%%%%%%%%%%%%%%%%%
\section{Proof of Theorem \ref{3D-Theorem-HD}}\label{S-6}
%Limiting argument
Let $\varepsilon_1^{\star}$ and $\sigma_1^{\star}$ be from Proposition \ref{Theorem-HD} and suppose that $u_0^+\le\varepsilon_1^{\star}$ and $\sigma\le\sigma_1^{\star}$. 
%According to Proposition \ref{Theorem-HD}, the free boundary problem \eqref{3D-Free-prob-finite}-\eqref{3D-Free-bd-finite} in $\Omega_{L}$ for each $L>0$ has a unique solution that satisfies \eqref{3D-est-Thm-HD-finite}.
For each $n\in\mathbb{N}$, let $(f_n,S_n^{\pm},\frac{\Lambda_n^{\pm}}{r}{\bf e}_{\theta}, \varphi_n^{\pm},\psi_n^{\pm}{\bf e}_{\theta})$ be the solution of \eqref{3D-Free-prob-finite}-\eqref{3D-Free-bd-finite} in $\Omega_{n+100}:=\Omega\cap\{x_1<n+100\}$, and suppose that the solution satisfies the estimate \eqref{3D-est-Thm-HD-finite} given in Proposition \ref{Theorem-HD}. By using the Arzel\'a-Ascoli theorem and a diagonal procedure, we can extract a subsequence 
$\{(f_{n_k},S_{n_k}^{\pm},\frac{\Lambda_{n_k}^{\pm}}{r}{\bf e}_{\theta}, \varphi_{n_k}^{\pm},\psi_{n_k}^{\pm}{\bf e}_{\theta})\}_{n_k\in\mathbb{N}}$ so that the subsequence converges to functions $(f_{\infty},S_{\infty}^{\pm},\frac{ \Lambda_{\infty}^{\pm}}{r}{\bf e}_{\theta}, \varphi_{\infty}^{\pm},\psi_{\infty}^{\pm}{\bf e}_{\theta})$ in the following sense:
For any $L>0$,
\begin{itemize}
\item[(i)] $f_{n_k}$ converges to $f_{\infty}$ in $C^2$ in $[0,L]$;
\item[(ii)] $(S_{n_k}^{\pm}\circ\mathfrak{T}_{n_k}^{\pm},\frac{\Lambda_{n_k}^{\pm}}{r}{\bf e}_{\theta}\circ\mathfrak{T}_{n_k}^{\pm})$ converges to $(S_{\infty}^{\pm},\frac{\Lambda_{\infty}^{\pm}}{r}{\bf e}_{\theta})$ in $C^1$ in $\overline{\Omega^{\pm}_{f_{\infty},L}}$, where $\mathfrak{T}_{n_k}^{\pm}:\overline{\Omega^{\pm}_{f_{\infty},n_k+100}}\to \overline{\Omega^{\pm}_{f_{n_k},n_k+100}}$ are defined by 
\begin{equation*}
\begin{split}
&\mathfrak{T}^{+}_{n_k}(x_1,x_2,x_3):=\left(x_1,\mathcal{G}\frac{x_2}{\sqrt{x_2^2+x_3^2}},\mathcal{G}\frac{x_3}{\sqrt{x_2^2+x_3^2}}\right),\\
&\mathfrak{T}^{-}_{n_k}(x_1,x_2,x_3):=\left(x_1,\frac{f_{n_k}(x_1)}{f_{\infty}(x_1)}x_2,\frac{f_{n_k}(x_1)}{f_{\infty}(x_1)}x_3\right)
\end{split}
\end{equation*}
for
\begin{equation}
\mathcal{G}:=\frac{1-f_{n_k}(x_1)}{1-f_{\infty}(x_1)}\sqrt{x_2^2+x_3^2}+\frac{f_{n_k}(x_1)-f_{\infty}(x_1)}{1-f_{\infty}(x_1)};
\end{equation}
\item[(iii)] $(\varphi_{n_k}^{\pm}\circ\mathfrak{T}_{n_k}^{\pm},(\psi_{n_k}^{\pm}{\bf e}_{\theta})\circ\mathfrak{T}_{n_k}^{\pm})$ converges to $(\varphi_{\infty}^{\pm},\psi_{\infty}^{\pm}{\bf e}_{\theta})$ in $C^2$ in $\overline{\Omega^{\pm}_{f_{\infty},L}}$.
\end{itemize}
%%%
By a change of variables and passing to the limit $n_k\to\infty$, one can prove that $(f_{\infty},S_{\infty}^{\pm},\frac{\Lambda_{\infty}^{\pm}}{r}{\bf e}_{\theta},\varphi_{\infty}^{\pm},\psi_{\infty}^{\pm}{\bf e}_{\theta})$ is a solution to the free boundary problem  \eqref{3D-Free-prob}-\eqref{3D-Free-bd}.
The $C^1$ convergence of $\{(S_{n_k}^{\pm},\frac{\Lambda_{n_k}^{\pm}}{r}{\bf e}_{\theta})\}_{n_k\in\mathbb{N}}$, $C^2$ convergence of $\{(f_{n_k},\varphi_{n_k}^{\pm},\psi_{n_k}^{\pm}{\bf e}_{\theta})\}_{n_k\in\mathbb{N}}$, and the estimate \eqref{3D-est-Thm-HD-finite} imply that $(f_{\infty},S_{\infty}^{\pm},\frac{\Lambda_{\infty}^{\pm}}{r}{\bf e}_{\theta}, \varphi_{\infty}^{\pm},\psi_{\infty}^{\pm}{\bf e}_{\theta})$ satisfies the estimate \eqref{3D-est-Thm-HD}.
The proof of Theorem \ref{3D-Theorem-HD} is completed. \qed
%%%%%%%%%%%%%%%%%%%%%%%%%%%%%%%%%%

%%%%%%%%%%%%%%%%%%%%%%%
\section{ Proof of Theorem \ref{main=Thm-infty}}\label{S-7}
%\subsection{}
\subsection{Proof of (a): Existence}\label{sub-ex}
Let $\varepsilon_1$ and $\sigma_1$ be from Theorem \ref{3D-Theorem-HD}, and suppose that $u_0^+\le\varepsilon_1$ and $\sigma\le \sigma_1$.
Suppose that $(f,S^{\pm},\Lambda^{\pm},\varphi^{\pm},\psi^{\pm})$ is an axisymmetric solution to  \eqref{3D-Free-prob}-\eqref{3D-Free-bd} and satisfies the estimate \eqref{3D-est-Thm-HD}.
For such a solution, we define ${\bf u}^{\pm},$ $\rho^{\pm},$ and $p^{\pm}$ by
\begin{equation}\label{real-sol}
{\bf u}^{\pm}:=\nabla\varphi^{\pm}+\nabla\times(\psi^{\pm}{\bf e}_{\theta})+\frac{\Lambda^{\pm}}{r}{\bf e}_{\theta},\quad
\rho^{\pm}:=\varrho^{\pm}(S^{\pm},{\bf u}^{\pm}),\quad
p^{\pm}:=S^{\pm}(\varrho^{\pm})^{\gamma}(S^{\pm},{\bf u}^{\pm})
\end{equation}
for $\varrho^{\pm}$ given in \eqref{3D-varphi0}.
Then, one can choose small constants $\varepsilon_0>0$ depending only on ($\rho_0^{\pm}$, $u_0^{-}$, $p_0$, $\gamma$, $\alpha$) and $\sigma_0>0$ depending only on ($\rho_0^{\pm}$, $u_0^{\pm}$, $p_0$, $\gamma$, $\alpha$) so that  if $u_0^+\le\varepsilon_0$ and $\sigma\le\sigma_0$, then $(f, \rho^{\pm},{\bf u}^{\pm},p^{\pm})$ is a solution to Problem \ref{P-1-infty} and the estimate \eqref{3D-est-Thm-HD} implies the estimate \eqref{u-rho-p-est-infty}.

\subsection{Proof of (b): Asymptotic behavior}\label{sub-asym}
To observe the asymptotic behavior of the solution, we follow  the computations in \cite[Proof of Theorem 2.1(b)]{bae2019contact3D}.
The main difference from \cite{bae2019contact3D} is the computation on the contact discontinuity because the pressure is not a constant on the contact discontinuity here. 
However, the transport structure of the entropy and the angular momentum density still allows us to derive the key estimates on the contact discontinuity in essentially the same way as in \cite{bae2019contact3D}.
In the following, we provide the details.

Define functions $\mathfrak{h}^{\pm}$ by 
\begin{equation*}
\begin{split}
&\mathfrak{h}^+(x,r):=\int_1^r t\rho{\bf u}^+\cdot{\bf e}_x(x,t)dt\mbox{ for }f(x)<r<1,\\
&\mathfrak{h}^{-}(x,r):=\int_0^r t\rho {\bf u}^-\cdot{\bf e}_x(x,t)dt\mbox{ for }0<r<f(x).
\end{split}
\end{equation*}
By the continuity equation, we have 
\begin{equation}\label{def-stream}
\partial_x\mathfrak{h}^{\pm}=-r\rho^{\pm}{\bf u}^{\pm}\cdot{\bf e}_r\mbox{ and }
\partial_r\mathfrak{h}^{\pm}=r\rho^{\pm}{\bf u}^{\pm}\cdot{\bf e}_x.
\end{equation}
Since the entropy and the angular momentum density are transported along streamlines, we can represent $S^{\pm}$ and $\Lambda^{\pm}$ as 
\begin{equation}\label{S-conserv}
S^{\pm}(x,r)=\tilde{S}^{\pm}(\mathfrak{h}^{\pm}(x,r))\mbox{ and }\Lambda^{\pm}(x,r)=\tilde{\Lambda}^{\pm}(\mathfrak{h}^{\pm}(x,r)).
\end{equation}
For notational simplicity, we drop $\tilde{}$.
Set
\begin{equation*}
\mathfrak{S}^+(\mathfrak{h}^+):=\frac{\gamma}{\gamma-1}S^+(\mathfrak{h}^+)=\frac{\gamma}{\gamma-1}S_0^+,\quad
\mathfrak{S}^-(\mathfrak{h}^-):=\frac{\gamma}{\gamma-1}S^-(\mathfrak{h}^-).
\end{equation*}
Then the definition of the Bernoulli invariant yields that
\begin{equation}\label{Ber-diff}
\begin{split}
&B_0^{+}r^2(\rho^{+})^2=\frac{1}{2}|\nabla\mathfrak{h}^{+}|^2+r^2\mathfrak{S}^+(\mathfrak{h}^+)(\rho^{+})^{\gamma+1},\\
&B_0^{-}r^2(\rho^{-})^2=\frac{1}{2}\left(|\nabla\mathfrak{h}^{-}|^2+(\Lambda^{-}(\mathfrak{h}^{-}))^2(\rho^{-})^2\right)+r^2\mathfrak{S}^{-}(\mathfrak{h}^{-})(\rho^{-})^{\gamma+1},
\end{split}
\end{equation}
where $\nabla=(\partial_x,\partial_r)$.
Differentiating \eqref{Ber-diff} with respect to $x$ and $r$ gives
\begin{equation}\label{rho-rho}
\begin{split}
\partial_x\rho^{+}=&\,\frac{(\partial_x\mathfrak{h}^+)(\partial_{xx}\mathfrak{h}^+)+(\partial_r\mathfrak{h}^+)(\partial_{xr}\mathfrak{h}^+)}{2r^2\rho^+B_0^+-(\gamma+1)\mathfrak{S}^+r^2(\rho^+)^{\gamma}},\\
\partial_r\rho^+=&\,\frac{(\partial_x\mathfrak{h}^+)(\partial_{xr}\mathfrak{h}^+)+(\partial_r\mathfrak{h}^+)(\partial_{rr}\mathfrak{h}^+)+2r\mathfrak{S}^+(\rho^+)^{\gamma+1}}{2r^2\rho^+B_0^+-(\gamma+1)\mathfrak{S}^+r^2(\rho^+)^{\gamma}},\\
\partial_x\rho^-=&\,\frac{(\partial_x\mathfrak{h}^-)(\partial_{xx}\mathfrak{h}^-)+(\partial_r\mathfrak{h}^-)(\partial_{xr}\mathfrak{h}^-)+\Lambda^-(\Lambda^-)'(\partial_x\mathfrak{h}^-)(\rho^-)^2+r^2(\mathfrak{S}^-)'(\partial_x\mathfrak{h}^-)(\rho^-)^{\gamma+1}}{2r^2\rho^-B_0^--(\gamma+1)\mathfrak{S}^-r^2(\rho^-)^{\gamma}-(\Lambda^-)^2\rho^-},\\
\partial_r\rho^-=&\,\frac{(\partial_x\mathfrak{h}^-)(\partial_{xr}\mathfrak{h}^-)+(\partial_r\mathfrak{h}^-)(\partial_{rr}\mathfrak{h}^-)+\Lambda^-(\Lambda^-)'(\partial_r\mathfrak{h}^-)(\rho^-)^2+r^2(\mathfrak{S}^-)'(\partial_r\mathfrak{h}^-)(\rho^-)^{\gamma+1}}{2r^2\rho^-B_0^--(\gamma+1)\mathfrak{S}^-r^2(\rho^-)^{\gamma}-(\Lambda^-)^2\rho^-}\\
&+\frac{2r\mathfrak{S}^-(\rho^-)^{\gamma+1}-2rB_0^-(\rho^-)^2}{2r^2\rho^-B_0^--(\gamma+1)\mathfrak{S}^-r^2(\rho^-)^{\gamma}-(\Lambda^-)^2\rho^-}.
\end{split}
\end{equation}
Using \eqref{def-stream}-\eqref{rho-rho}, the equation 
\begin{equation*}
\rho^{\pm}({\bf u}^{\pm}\cdot{\bf e}_x\partial_x+{\bf u}^{\pm}\cdot{\bf e}_r\partial_r){\bf u}^{\pm}\cdot{\bf e}_r-\frac{\rho^{\pm}({\bf u}^{\pm}\cdot{\bf e}_{\theta})^2}{r}+\partial_r p^{\pm}=0
\end{equation*}
can be rewritten as 
\begin{equation}\label{h-eq-eq}
\begin{split}
&\nabla\cdot\left(\frac{\nabla\mathfrak{h}^{+}}{r\rho^{+}}\right)=0,\\
&\nabla\cdot\left(\frac{\nabla\mathfrak{h}^{-}}{r\rho^{-}}\right)=-\frac{r}{\gamma}(\mathfrak{S}^-)'(\rho^{-})^{\gamma}-\frac{\Lambda^-(\Lambda^-)'\rho^{-}}{r}.
\end{split}
\end{equation}
Set
\begin{equation*}
\omega^{\pm}:=\partial_x\mathfrak{h}^{\pm}
\end{equation*}
and differentiate \eqref{h-eq-eq} with respect to $x$ to get the following equations for $\omega^{\pm}$:
\begin{equation}\label{eq-omega-pm}
\sum_{i,j=1}^2\partial_i\left(a_{ij}^{\pm}\partial_j\omega^{\pm}+a_i^{\pm}\omega^{\pm}\right)=\partial_x\left(-\frac{r}{\gamma}(\mathfrak{S}^{\pm})'(\rho^{\pm})^{\gamma}-\frac{\Lambda^{\pm}(\Lambda^{\pm})'\rho^{\pm}}{r}\right),\quad (\partial_1,\partial_2):=(\partial_x,\partial_r)
\end{equation}
for  $a_{ij}^{\pm}$ and $a_i^{\pm}$ given by 
\begin{equation*}
\begin{split}
&a_{ij}^{\pm}:=\frac{1}{r\rho^{\pm}}\delta_{ij}-\frac{(\partial_i\mathfrak{h}^{\pm})(\partial_j\mathfrak{h}^{\pm})}{r(\rho^{\pm})^2A_1^{\pm}},\quad A_1^{\pm}:=2r^2\rho^{\pm}B_0^{\pm}-(\gamma+1)\mathfrak{S}^{\pm}r^2(\rho^{\pm})^{\gamma}-(\Lambda^{\pm})^2\rho^{\pm},\\
&a_i^{\pm}:=-\frac{(\partial_i \mathfrak{h}^{\pm})A_2^{\pm}}{r(\rho^{\pm})^2A_1^{\pm}},\quad A_2^{\pm}:=\Lambda^{\pm}(\Lambda^{\pm})'(\rho^{\pm})^2+r^2(\mathfrak{S}^{\pm})'(\rho^{\pm})^{\gamma+1}.
\end{split}
\end{equation*}
By the boundary conditions and the compatibility condition, $\omega^{\pm}$ satisfy
\begin{equation}
\omega^{\pm}=r\rho^{\pm}v_{\rm en}^{\pm}\mbox{ on }\Gamma_{\rm en}^{\pm},\quad
\omega^+=0\mbox{ on }\{r=1\},\quad \omega^-=0\mbox{ on }\{r=0\}.
\end{equation} 
Now we compute conormal boundary conditions for \eqref{eq-omega-pm} on the contact discontinuity $\{r=f(x)\}$. 
From the definition of the Bernoulli invariant, we have 
\begin{equation}\label{diff-tan}
B_0^{\pm}(\rho^{\pm})^2=\frac{1}{2}\left(|\partial_x\mathfrak{h}^{\pm}|^2+|\partial_r\mathfrak{h}^{\pm}|^2\right)+\frac{1}{2}\left(\frac{\Lambda^{\pm}}{r}\right)^2(\rho^{\pm})^2+\mathfrak{S}^{\pm}(\rho^{\pm})^{\gamma+1}.
\end{equation}
Differentiating the equation \eqref{diff-tan} in the tangential direction along $\{r=f(x)\}$ gives 
\begin{equation}\label{tan-tan}
\begin{split}
&2B_0^{\pm}\rho^{\pm}(\partial_x\rho^{\pm})-(\partial_x\mathfrak{h}^{\pm})(\partial_{xx}\mathfrak{h}^{\pm})-(\partial_r\mathfrak{h}^{\pm})(\partial_{rx}\mathfrak{h}^{\pm})\\
&-\frac{\Lambda^{\pm}(\partial_x\Lambda^{\pm})}{r}(\rho^{\pm})^2-\left(\frac{\Lambda^{\pm}}{r}\right)^2\rho^{\pm}(\partial_x\rho^{\pm})-(\partial_x\mathfrak{S}^{\pm})(\rho^{\pm})^{\gamma+1}-\mathfrak{S}^{\pm}(\gamma+1)(\rho^{\pm})^{\gamma}(\partial_x\rho^{\pm})\\
&+2B_0^{\pm}\rho^{\pm}(\partial_r\rho^{\pm})f'-(\partial_x\mathfrak{h}^{\pm})(\partial_{xr}\mathfrak{h}^{\pm})f'-(\partial_r\mathfrak{h}^{\pm})(\partial_{rr}\mathfrak{h}^{\pm})f'\\
&-\frac{\Lambda^{\pm}(\partial_r\Lambda^{\pm})}{r}(\rho^{\pm})^2f'-\left(\frac{\Lambda^{\pm}}{r}\right)^2\rho^{\pm}(\partial_r\rho^{\pm})f'-(\partial_r\mathfrak{S}^{\pm})(\rho^{\pm})^{\gamma+1}f'-\mathfrak{S}^{\pm}(\gamma+1)(\rho^{\pm})^{\gamma}(\partial_r\rho^{\pm})f'=0.
\end{split}
\end{equation}
Since the entropy and the angular momentum density are transported along streamlines, and since the contact discontinuity is a streamline, and $f'=-\frac{\partial_x\mathfrak{h}^{\pm}}{\partial_r\mathfrak{h}^{\pm}}$, the equation \eqref{tan-tan} can be rewritten as 
%\begin{equation}\label{tan-tan-re}
%\begin{split}
%&2B_0^{\pm}\rho^{\pm}(\partial_x\rho^{\pm})(\partial_r\mathfrak{h}^{\pm})-(\partial_x\mathfrak{h}^{\pm})(\partial_{xx}\mathfrak{h}^{\pm})(\partial_r\mathfrak{h}^{\pm})-(\partial_r\mathfrak{h}^{\pm})(\partial_{rx}\mathfrak{h}^{\pm})(\partial_r\mathfrak{h}^{\pm})\\
%&-\left(\frac{\Lambda^{\pm}}{r}\right)^2\rho^{\pm}(\partial_x\rho^{\pm})(\partial_r\mathfrak{h}^{\pm})-\mathfrak{S}^{\pm}(\gamma+1)(\rho^{\pm})^{\gamma}(\partial_x\rho^{\pm})(\partial_r\mathfrak{h}^{\pm})\\
%&+2B_0^{\pm}\rho^{\pm}(\partial_r\rho^{\pm})(-\partial_x\mathfrak{h}^{\pm})-(\partial_x\mathfrak{h}^{\pm})(\partial_{xr}\mathfrak{h}^{\pm})(-\partial_x\mathfrak{h}^{\pm})-(\partial_r\mathfrak{h}^{\pm})(\partial_{rr}\mathfrak{h}^{\pm})(-\partial_x\mathfrak{h}^{\pm})\\
%&-\left(\frac{\Lambda^{\pm}}{r}\right)^2\rho^{\pm}(\partial_r\rho^{\pm})(-\partial_x\mathfrak{h}^{\pm})-\mathfrak{S}^{\pm}(\gamma+1)(\rho^{\pm})^{\gamma}(\partial_r\rho^{\pm})(-\partial_x\mathfrak{h}^{\pm})=0.
%\end{split}
%\end{equation}
%
\begin{equation}\label{xr-point}
\partial_{xr}\mathfrak{h}^{\pm}=\frac{A_3(\partial_x\mathfrak{h}^{\pm})+A_4(\partial_x\rho^{\pm})}{(\partial_r\mathfrak{h}^{\pm})^2-(\partial_x\mathfrak{h}^{\pm})^2}
\end{equation}
for $A_3$ and $A_4$ given by 
\begin{equation*}
\begin{split}
A_3:=&\,-(\partial_{xx}\mathfrak{h}^{\pm})(\partial_r\mathfrak{h}^{\pm})-2B_0^{\pm}\rho^{\pm}(\partial_r\rho^{\pm})+(\partial_{xr}\mathfrak{h}^{\pm})(-\partial_x\mathfrak{h}^{\pm})+(\partial_r\mathfrak{h}^{\pm})(\partial_{rr}\mathfrak{h}^{\pm})\\
&+\left(\frac{\Lambda^{\pm}}{r}\right)^2\rho^{\pm}(\partial_r\rho^{\pm})+\mathfrak{S}^{\pm}(\gamma+1)(\rho^{\pm})^{\gamma}(\partial_r\rho^{\pm}),\\
A_4:=&\,\frac{1}{r^2}A_1(\partial_r\mathfrak{h}^{\pm}).%\left(2B_0^{\pm}\rho^{\pm}-\left(\frac{\Lambda^{\pm}}{r}\right)^2\rho^{\pm}-\mathfrak{S}^{\pm}(\gamma+1)(\rho^{\pm})^{\gamma}\right).
\end{split}
\end{equation*}
Since 
\begin{equation*}
\partial_x\rho^{\pm}=\,\frac{(\partial_x\mathfrak{h}^{\pm})(\partial_{xx}\mathfrak{h}^{\pm})+(\partial_r\mathfrak{h}^{\pm})(\partial_{xr}\mathfrak{h}^{\pm})+\Lambda^{\pm}(\Lambda^{\pm})'(\partial_x\mathfrak{h}^{\pm})(\rho^{\pm})^2+r^2(\mathfrak{S}^{\pm})'(\partial_x\mathfrak{h}^{\pm})(\rho^{\pm})^{\gamma+1}}{A_1},
\end{equation*}
we have 
\begin{equation}\label{A4}
A_4(\partial_x\rho^{\pm})=\frac{\partial_r\mathfrak{h}^{\pm}}{r^2}\left(A_5(\partial_x\mathfrak{h}^{\pm})+(\partial_r\mathfrak{h}^{\pm})(\partial_{xr}\mathfrak{h}^{\pm})\right)
\end{equation}
for $A_5$ given by 
\begin{equation*}
A_5:=(\partial_{xx}\mathfrak{h}^{\pm})+\Lambda^{\pm}(\Lambda^{\pm})'(\rho^{\pm})^2+r^2(\mathfrak{S}^{\pm})'(\rho^{\pm})^{\gamma+1}.
\end{equation*}
By \eqref{xr-point}-\eqref{A4}, we have 
\begin{equation*}
\begin{split}
\partial_{xr}\mathfrak{h}^{\pm}=&\,\frac{(\partial_x\mathfrak{h}^{\pm})\left(A_3+\frac{1}{r^2}A_5(\partial_r\mathfrak{h}^{\pm})\right)}{(\partial_r\mathfrak{h}^{\pm})^2-(\partial_x\mathfrak{h}^{\pm})^2}\left(1-\frac{(\partial_r\mathfrak{h}^{\pm})^2}{r^2\left((\partial_r\mathfrak{h}^{\pm})^2-(\partial_x\mathfrak{h}^{\pm})^2\right)}\right)^{-1}\\
=:&\,\omega A_6
\end{split}
\end{equation*}
which yields that 
\begin{equation}\label{ess-xr}
\begin{split}
a_{21}^{\pm}\partial_x\omega^{\pm}+a_{22}^{\pm}\partial_r\omega^{\pm}
=&\,-\frac{(\partial_x\mathfrak{h}^{\pm})(\partial_r\mathfrak{h}^{\pm})}{r(\rho^{\pm})^2A_1^{\pm}}\partial_{xx}\mathfrak{h}^{\pm}
+\left(\frac{1}{r\rho^{\pm}}-\frac{(\partial_r\mathfrak{h}^{\pm})^2}{r(\rho^{\pm})^2A_1^{\pm}}\right)\partial_{xr}\mathfrak{h}^{\pm}\\
=&\, \omega A_7
\end{split}
\end{equation}
for $A_7$ given by 
\begin{equation*}
A_7:=-\frac{(\partial_r\mathfrak{h}^{\pm})(\partial_{xx}\mathfrak{h}^{\pm})}{r(\rho^{\pm})^2A_1^{\pm}}+\left(\frac{1}{r\rho^{\pm}}-\frac{(\partial_r\mathfrak{h}^{\pm})^2}{r(\rho^{\pm})^2A_1^{\pm}}\right)A_6.
\end{equation*}
%%%%%%
By \eqref{ess-xr}, we have 
\begin{equation}\label{co-no-bd}
\left(a_{11}^{\pm}\partial_x\omega^{\pm}+a_{12}^{\pm}\partial_r\omega^{\pm},a_{21}^{\pm}\partial_x\omega^{\pm}+a_{22}^{\pm}\partial_r\omega^{\pm}\right)\cdot{\bf n}^{\pm}=\frac{\mathfrak{w}\omega}{\sqrt{1+|f'|^2}}
\end{equation}
for $\mathfrak{w}$ given by 
\begin{equation*}
\mathfrak{w}:=-\frac{a_{11}^{\pm}\partial_x\omega^{\pm}+a_{12}^{\pm}\partial_r\omega^{\pm}}{\partial_r\mathfrak{h}^{\pm}}+A_7.
\end{equation*}
The rest of the calculation is then almost the same as  \cite[Proof of Theorem 2.1(b)]{bae2019contact3D}.
Thus we only state the outline of the rest. %More more details, one can refer to  \cite[Proof of Theorem 2.1(b)]{bae2019contact3D}.
Using \eqref{co-no-bd} and following the energy estimates in \cite[Proof of Theorem 2.1(b)]{bae2019contact3D}, we can show that 
\begin{equation*}
\int_0^{\infty}\int_{f(x)}^1|\nabla\omega^+|^2drdx\le C\mbox{ and }
\int_0^{\infty}\int_0^{f(x)}|\nabla\omega^-|^2drdx\le C.
\end{equation*}
Combining the above energy estimate with the uniform $C^{1,\alpha}$ estimates for $\omega^{\pm}$ yields
\begin{equation}\label{nabla-omega}
\begin{split}
\|\nabla\omega^{\pm}(x,\cdot)\|_{C^0(\overline{\Omega_{f}^{\pm}\cap\{x>L\}})}\to 0\mbox{ as }L\to \infty.
\end{split}
\end{equation}
Since $\omega^-=0$ on $\{r=0\}$ and $\omega^+=0$ on $\{r=1\}$, we have 
\begin{equation*}
\|\omega^{\pm}(x,\cdot)\|_{C^0(\overline{\Omega_{f}^{\pm}\cap\{x>L\}})}\to 0\mbox{ as }L\to \infty.
\end{equation*}
Since $\rho^{\pm}>\frac{\rho_0^{\pm}}{2}$ for a sufficiently small $\sigma$ and $\omega=-r\rho^{\pm}{\bf u}^{\pm}\cdot{\bf e}_r$, we have 
 \begin{equation}\label{ur-c0}
\begin{split}
\|r{\bf u}^{\pm}\cdot{\bf e}_r(x,\cdot)\|_{C^0(\overline{\Omega_{f}^{\pm}\cap\{x>L\}})}\to 0\mbox{ as }L\to \infty.
\end{split}
\end{equation}
By \eqref{nabla-omega} and \eqref{ur-c0}, we have
 \begin{equation*}\label{ur-c1}
\begin{split}
\|r{\bf u}^{\pm}\cdot{\bf e}_r(x,\cdot)\|_{C^1(\overline{\Omega_{f}^{\pm}\cap\{x>L\}})}\to 0\mbox{ as }L\to \infty,
\end{split}
\end{equation*}
from which 
\begin{equation*}
\begin{split}
&\|f'(x)\|_{C^1(\{x\ge L\})}\to0,\\
&\|{\bf u}^{\pm}\cdot{\bf e}_r(x,\cdot)\|_{C^1(\overline{\Omega_{f}^{\pm}\cap\{x>L\}})}\to 0\mbox{ as }L\to \infty.
\end{split}
\end{equation*}
%using ${\bf u}^{\pm}\cdot{\bf e}_r(x,0)=0$ and the axisymmetric regularity.
Near the axis $\{r=0\}$, this follows from the axisymmetric regularity condition ${\bf u}^-\cdot{\bf e}_r(x,0)=0$ and the standard relation between $r{\bf u}^{\pm}\cdot{\bf e}_r$ and ${\bf u}^{\pm}\cdot{\bf e}_r$ away from the axis the conclusion is immediate.
From this and the momentum equation, we have 
\begin{equation*}
\|\partial_rp^{\pm}(x,\cdot)-\frac{\rho^{\pm}({\bf u}^{\pm}\cdot{\bf e}_{\theta})^2}{r}(x,\cdot)\|_{C^0(\overline{\Omega_{f}^{\pm}\cap\{x>L\}})}\to 0\mbox{ as }L\to \infty.
\end{equation*}
This finishes the proof of Theorem \ref{main=Thm-infty}.
\qed
%%%%%%%%%%%%%%%%%%%%%%%%%%%%%%%%%%%%%%%%%%%%%%%%%%%%%%%%%%%%%%%%%%%%%%%%%%%%%%%%%%%%%%%%%%%%%%%%%%%%%%%%%%%%%%%%%%%%%%%%%%%%%%%%

%%%%%%%%%%%%%%%%%%%%%%%%%%%%%%%%%%%%%%%%%%%%%%%%%%%%%%%
\vspace{.25in}
\noindent
{\bf Acknowledgments.}\,\,
%The author expresses gratitude for the encouragement and support of Myoungjean Bae, Gui-Qiang G. Chen, Mikhail Feldman, and Joonhyun La.\\
%The research of Hyangdong Park was supported in part by the POSCO Science Fellowship of POSCO TJ Park Foundation.\\
This research was supported in part by the Ministry of Education of the Republic of Korea and the National Research Foundation of Korea (NRF-RS-2025-00553734).\\

%%%%%%%%%%%%%%%%%%%%%%%%%%%%%%%%%%%%
%%%%%%%%%%%%%%%%%%%%%%%%%%%%%%%%%%%%

%\bigskip

\noindent
{\bf Conflict of interests.}\,\,
There is no conflict of interest.\\

\noindent
{\bf Data availability statement.}\,\,
Data sharing is not applicable to this article as no datasets were generated or analyzed during the current study.

\bigskip
\bibliographystyle{siam}
\bibliography{References}

\end{document}